\def\dmax{d_{\rm max}}
\def\an{a^{(n)}}
\def\An{A^{(n)}}
\def\Cn{C^{(n)}}
\def\dn{d^{(n)}}
\def\Dn{D^{(n)}}
\def\Fn{F^{(n)}}
\def\Gn{G^{(n)}}
\def\Hn{H^{(n)}}
\def\kn{k^{(n)}}
\def\Mn{M^{(n)}}
\def\pn{p^{(n)}}
\def\qn{q^{(n)}}
\def\Rn{R^{(n)}}
\def\Sn{S^{(n)}}
\def\Tn{T^{(n)}}
\def\Un{U^{(n)}}
\def\vn{v^{(n)}}
\def\Vn{V^{(n)}}
\def\Xn{X^{(n)}}
\def\Yn{Y^{(n)}}
\def\Xne{X^{(n)}_E}
\def\Yne{Y^{(n)}_E}
\def\Zne{Z^{(n)}_E}
\def\qtn{\tilde{q}^{(n)}}
\def\qhatn{\hat{q}^{(n)}}
\def\Hnt{\tilde{H}^{(n)}}
\def\Hnhat{\hat{H}^{(n)}}
\def\Tnt{\tilde{T}^{(n)}}
\def\Tncheck{\check{T}^{(n)}}
\def\Vnbar{\bar{V}^{(n)}}
\def\Xnt{\tilde{X}^{(n)}}
\def\Xncheck{\check{T}^{(n)}}
\def\Ynet{\tilde{Y}^{(n)}_E}
\def\Znet{\tilde{Z}^{(n)}_E}
\def\betat{\tilde{\beta}}
\def\betabar{\bar{\beta}}
\def\cnt{\tilde{c}^{(n)}}
\def\dnt{\tilde{d}^{(n)}}\newcommand{\hfigwidth}{0.45\textwidth}
\newenvironment{proof}[1][Proof]
              {\par \normalfont
              \trivlist
             \item[
               \hspace{12pt}               \itshape #1{.}]\ignorespaces
        }{\hfill$\Box$ \endtrivlist}
\def\pnt{\tilde{p}^{(n)}}
\def\bWn{\boldsymbol{W}^{(n)}}
\def\bXn{\boldsymbol{X}^{(n)}}
\def\bWnt{\tilde{\boldsymbol{W}}^{(n)}}
\def\bWnbar{\bar{\boldsymbol{W}}^{(n)}}
\def\bXnt{\tilde{\boldsymbol{X}}^{(n)}}
\def\bXncheck{\check{\boldsymbol{X}}^{(n)}}
\def\bWnhat{\hat{\boldsymbol{W}}^{(n)}}
\def\epsilonn{\epsilon^{(n)}}
\def\pin{\pi^{(n)}}
\def\taun{\tau^{(n)}}
\def\tautn{\tilde{\tau}^{(n)}}
\def\tauhatn{\hat{\tau}^{(n)}}
\def\taubarn{\bar{\tau}^{(n)}}
\def\mudn{\mu_D^{(n)}}
\def\CnMRbond{C^{(n)}_{\rm MR, B}}
\def\CnMRsite{C^{(n)}_{\rm MR, S}}
\def\CnNSWbond{C^{(n)}_{\rm NSW, B}}
\def\CnNSWsite{C^{(n)}_{\rm NSW, S}}
\def\TnMR{T^{(n)}_{\rm MR}}
\def\TnNSW{T^{(n)}_{\rm NSW}}
\def\sigmaMR{\sigma^2_{\rm MR}}
\def\sigmaNSW{\sigma^2_{\rm NSW}}
\def\sigmatMR{\tilde{\sigma}^2_{\rm MR}}
\def\sigmatNSW{\tilde{\sigma}^2_{\rm NSW}}
\def\SigmaMR{\Sigma^{\rm MR}}
\def\SigmaNSW{\Sigma^{\rm NSW}}
\def\SigmabarMR{\bar{\Sigma}^{\rm MR}}
\def\SigmabarNSW{\bar{\Sigma}^{\rm NSW}}
\def\sigmaMRbond{\sigma^2_{\rm MR, B}}
\def\sigmaMRsite{\sigma^2_{\rm MR, S}}
\def\sigmaNSWbond{\sigma^2_{\rm NSW, B}}
\def\sigmaNSWsite{\sigma^2_{\rm NSW, S}}
\def\Ynehat{\hat{Y}^{(n)}_E}
\def\Znehat{\hat{Z}^{(n)}_E}
\def\Ynecheck{\check{Y}^{(n)}_E}
\def\Znecheck{\check{Z}^{(n)}_E}
\def\EEn{\mathcal{E}^{(n)}}
\def\EEtn{\tilde{\mathcal{E}}^{(n)}}
\def\EEcheckn{\check{\mathcal{E}}^{(n)}}
\def\GGn{\mathcal{G}^{(n)}}
\def\GGnbond{\mathcal{G}^{(n)}_{\rm bond}}
\def\GGnsite{\mathcal{G}^{(n)}_{\rm site}}
\def\GGtn{\tilde{\mathcal{G}}^{(n)}}
\def\IIn{\mathcal{I}^{(n)}}
\def\NNn{\mathcal{N}^{(n)}}
\def\TTn{\mathcal{T}^{(n)}}
\def\BBtn{\tilde{\mathcal{B}}^{(n)}}
\def\BBcheckn{\check{\mathcal{B}}^{(n)}}
\def\BBt{\tilde{\mathcal{B}}}
\newcommand\E{\rm E}
\def\P{\rm P}
\newcommand\var{\rm var}
\newcommand\re{{\rm e}}
\def\mudt{\mu_{\tilde{D}-1}}
\def\mundt{\mu^{(n)}_{\tilde{D}-1}}
\def\B{\mathcal{B}}
\newcommand\bc{\boldsymbol{c}}
\newcommand\bcbar{\bar{\boldsymbol{c}}}
\newcommand\bD{\boldsymbol{D}}
\newcommand\bn{\boldsymbol{n}}
\newcommand\bnhat{\hat{\boldsymbol{n}}}
\newcommand\ber{\boldsymbol{e}^{\rm R}}
\newcommand\bes{\boldsymbol{e}^{\rm S}}
\newcommand\bei{\boldsymbol{e}^{\rm I}}
\newcommand\bev{\boldsymbol{e}^{\rm V}}
\newcommand\bi{\boldsymbol{i}}
\newcommand\bl{\boldsymbol{l}}
\newcommand\blhat{\hat{\boldsymbol{l}}}
\newcommand\bp{\boldsymbol{p}}
\newcommand\bV{\boldsymbol{V}}
\newcommand\bw{\boldsymbol{w}}
\newcommand\bx{\boldsymbol{x}}
\newcommand\by{\boldsymbol{y}}
\newcommand\bzero{\boldsymbol{0}}
\newcommand\bone{\boldsymbol{1}}
\newcommand\bwt{\tilde{\boldsymbol{w}}}
\newcommand\bwhat{\hat{\boldsymbol{w}}}
\newcommand\bwbar{\bar{\boldsymbol{w}}}
\newcommand\etat{\tilde{\eta}_E}
\newcommand\taut{\tilde{\tau}}
\newcommand\Bbar{\bar{B}}
\newcommand\ct{\tilde{c}}
\newcommand\dt{\tilde{d}}
\newcommand\gt{\tilde{g}}
\newcommand\pt{\tilde{p}}
\newcommand\xt{\tilde{x}}
\newcommand\xbar{\bar{x}}
\newcommand\vt{\tilde{v}}
\newcommand\vbar{\bar{v}}
\newcommand\xet{\tilde{x}_E}
\newcommand\yet{\tilde{y}_E}
\newcommand\yebar{\bar{y}_E}
\newcommand\zet{\tilde{z}_E}
\newcommand\zebar{\bar{z}_E}
\newcommand\Phit{\tilde{\Phi}}
\newcommand\Phibar{\bar{\Phi}}
\newcommand\Sigmat{\tilde{\Sigma}}
\newcommand\Sigmabar{\bar{\Sigma}}
\newcommand\mud{\mu_D}
\newcommand\fde{f_{D_{\epsilon}}}
\newcommand{\convas}{\stackrel{a.s.}{\longrightarrow}}
\newcommand{\convD}{\stackrel{D}{\longrightarrow}}
\newcommand{\convp}{\stackrel{p}{\longrightarrow}}
\newcommand{\eqD}{\stackrel{D}{=}}
\newtheorem{theorem}{Theorem}[section]
\newtheorem{cor}[theorem]{Corollary}
\newtheorem{remark}[theorem]{Remark}
\numberwithin{equation}{section}
\begin{document}


\title{Central limit theorems for SIR epidemics and percolation on configuration model random graphs}


\author{Frank Ball$^{1}$}
\footnotetext[1]{School of Mathematical Sciences, The University of Nottingham, University Park, Nottingham NG7 2RD, UK}

\date{\today}
\maketitle

\begin{abstract}
We consider a stochastic SIR (susceptible $\to$ infective $\to$ recovered) epidemic defined on
a configuration model random graph, in which infective individuals can infect only their neighbours in the graph during an infectious period which has
an arbitrary but specified distribution.  Central limit theorems for the final size (number of initial susceptibles that
become infected) of such an epidemic as the population size $n$ tends to infinity, with explicit, easy to compute expressions for the asymptotic variance, are proved
assuming that the degrees are bounded.  The results are obtained for both the Molloy-Reed random graph, in which the degrees of individuals are deterministic,
and the Newman-Strogatz-Watts random graph, in which the degrees are independent and identically distributed.  The central limit theorems cover the cases
when the number of initial infectives either (a) tends to infinity or (b) is held fixed as $n \to \infty$.  In (a) it is assumed that the 
fraction of the population that is initially infected converges to a limit (which may be $0$) as $n \to \infty$, while in (b) the central limit theorems
are conditional upon the occurrence of a large outbreak (more precisely one of size at least $\log n$).  Central limit theorems for the size of the
largest cluster in bond percolation on Molloy-Reed and Newman-Strogatz-Watts random graphs follow immediately from our results, as do
central limit theorems for the size of the giant component of those graphs.  Corresponding central limit theorems for site percolation on those graphs
are also proved.
\end{abstract}




\section{Introduction} 
\label{sec:intro}
There has been considerable work in the past two decades on models for the spread of epidemics on random networks; see, for example, 
the recent book~\cite{KMS:2017}.  The usual paradigm is that individuals in a population are represented by nodes in a random graph
and infected individuals are able to transmit infection only to their neighbours in the graph.  The graph is often constructed using 
the configuration model (see, for example, \cite{vdHofstad:2016}, Chapter 7), which allows for an arbitrary but specified degree distribution.  The most-studied type of
epidemic model is the SIR (susceptible $\to$ infective $\to$ recovered) model.  In this model individuals are classified into three
types: susceptibles, infectives and recovered.  If a susceptible individual is contacted by an infective then it too becomes an
infective and remains so for a time, called its infectious period, that is distributed according to a non-negative random variable $I$
having an arbitrary but specified distribution.  An infective individual recovers at the end of its infectious period and is then immune to
further infection.  During its infectious period, an infective contacts its susceptible neighbours in the graph independently at the points of
Poisson processes having rate $\lambda$.  The graph is assumed to be static and the population closed (i.e.~there are no births or deaths),
so eventually the epidemic process terminates.  The final size of the epidemic is the number of initial susceptibles that are infected
during its course.  The final size is a key epidemic statistic, not only as a measure of the impact of an epidemic but also in an inferential
setting, since often it can be observed more reliably than the precise temporal spread.  The main aim of this paper is to develop
central limit theorems for the final size of an SIR epidemic on configuration model graphs as the population size $n \to \infty$.

The configuration model, which was introduced by~\cite{Bollobas:1980}, is a model
for random graphs with a given degree sequence.  There are two distinct approaches for constructing configuration model graphs
with a given degree distribution as $n \to \infty$.  In both approaches, individuals are assigned  a number of half-edges, corresponding to their
degree, and then these half-edges are paired uniformly at random.  In~\cite{MR:1995}, the degrees of indiviudals are prescribed deterministically 
whilst in~\cite{NSW:2001} they are i.i.d.~(independent and identically distributed) copies of a random variable $D$, that describes the 
limiting degree distribution.  We refer to the former as the MR random graph and to the latter as the NSW random graph. Subject to suitable
conditions on the degree sequences in the MR model and $D$ in the NSW model, law of large number limits for SIR epidemics on the two
graphs are the same.  That is not the case for central limit theorems, as for finite $n$, there is greater variability in the degrees of
individuals in the NSW model than in the MR model; indeed, in the NSW model, such variability is of the same order of magnitude as the 
variability in the epidemic.  Thus, though the asymptotic means are the same, the asymptotic variances in the central limit theorems 
for final size are greater for the epidemic on the NSW random graph.

There have been numerous studies, some fully rigorous and some heuristic, of SIR epidemics on configuration model networks making various
assumptions concerning the infectious period random variable $I$.  For example, assuming $I$ is constant,~\cite{Andersson:1998} derives a law of
large numbers for the final size of an epidemic on an NSW random graph when a strictly positive fraction of the population is initially infected in the limit
as $n \to \infty$ and~\cite{BJML:2007} obtain a similar result for epidemics on an MR random graph initiated by a single infective.  In the latter case,
a large outbreak is possible only if the basic reproduction number $R_0>1$; see~\eqref{equ:Rzero} in Section~\ref{sec:mainresults}.  In a highly influential paper,~\cite{Newman:2002}
uses heuristic percolation arguments to obtain a number of results, including the fraction of the population infected by a large outbreak, for SIR epidemics on NSW random graphs with $I$ having an arbitrary but specified distribution.  Several authors have studied the case when $I$ has an exponential distribution,
so the model becomes Markovian.  \cite{DDMT:2012} obtain a law of large numbers type result for the epidemic process on an NSW random graph
with a strictly positive fraction initially infected, which yields a rigorous proof of the deterministic approximation of~\cite{Volz:2008}
(see also~\cite{Miller:2011} and~\cite{MSV:2012}).  \cite{BP:2012} obtain law of large numbers results for both the process and final size of
an epidemic with one intial infective on an MR random graph with bounded degrees.  \cite{JLW:2014} obtain similar results under weaker conditions on the 
degree sequences considering the cases when the fraction initially infected, in the limit as $n\to \infty$, is either strictly positive or zero (assuming 
of course there is at least one initial infective).  In the latter case, the limiting ``deterministic" process involves a random time translation
reflecting the time taken for the number of infectives to reach order $n$; a similar result is obtained by~\cite{BR:2013} assuming a bounded degree
sequence and an arbitrary but specified distribution for $I$.

There has been very little work to date on central limit theorems for SIR epidemics on configuration model networks.  A functional central limit 
theorem for the SI epidemic (in which ${\rm P}(I=\infty)=1$, so infectives remain infectious forever) on an MR random graph with unbounded degrees
is obtained by~\cite{KWRK:2017}, who note that their method is not straightforward to extend to an SIR model.  Assuming that $I$ follows an exponential
distribution and bounded degrees,~\cite{BBLS:2018} use an effective degree approach (\cite{BN:2008}) and density dependent population processes (\cite{EK86}, Chapter 11) to obtain functional central limit theorems for SIR epidemics on MR and NSW random graphs, in which susceptible individuals can also drop their edges to infective neighbours.  They also conjecture central limit theorems for the final size of such epidemics
(and hence as a special case for the final size of standard SIR epidemics, without dropping of edges), assuming
either a strictly positive fraction or a constant number of initial infectives (in which case the central limit theorem is conditional on the occurrence of a large outbreak). However, the arguments are not fully rigorous and the result for a constant
number of initial infectives is based purely on the existence of equivalent results for other (non-network) SIR epidemic models.  Another limitation of~\cite{BBLS:2018}
is the assumption that $I$ is exponentially distributed, which is unrealistic for most real-life diseases.

In the present paper, we address these shortcomings and derive fully rigorous
central limit theorems for the final size of SIR epidemics on MR and NSW random graphs having bounded degrees, when the infectious period $I$ follows an arbitrary but specified distribution.  We consider the cases when the limiting fraction of the population is (i) strictly positive and (ii) zero. For the latter
we treat the situtations where the number of initial infectives either (i) is held fixed independent of $n$ or (ii) tends to $\infty$ as $n \to \infty$. 
The mean parameter $\rho$ in the central limit theorems, which coincides with the corresponding law of large numbers limit, depends on the solution
$z$ of a non-linear equation (see~\eqref{equ:z} and~\eqref{equ:zthm2} in Section~\ref{sec:mainresults}).  Given $z$, the variance parameter in
the central limit theorems is fully explicit and hence easy to compute.  

If $I$ is constant, say ${\rm P}(I=1)=1$ then the above SIR model is essentially bond percolation with probability $\pi=1-\re^{-\lambda}$
and if ${\rm P}(I=\infty)=\pi=1-{\rm P}(I=\infty)=0$ then it is closely related to site percolation with probability $\pi$; see, for example,~\cite{Durrett:2007}, page
15, and~\cite{Janson:2009a}.   Central limit theorems for the size of the giant component (largest cluster) of bond percolation on
the MR and NSW random graphs follow immediately from our results. (Corresponding theorems for site percolation are also obtained using our
methodology.)  Further,
setting $\pi=1$ yields central limit theorems for the giant component of those graphs (Remark~\ref{rmk:giantclt}); cf.~\cite{BR:2017} who obtain a central limit
theorem for the giant component of the MR random graph and~\cite{BN:2017} who derive the asymptotic variance of the giant component of
MR and NSW random graphs, all allowing for unbounded degrees.

The proofs involve constructing the random graph and epidemic on it simultanaeously,
modifying the infection mechanism so that when a susceptible is infected it decides which of its half-edges it will try to infect along with
(its remaining half-edges becoming recovered half-edges), with the times of those infection attempts (relative to the time of infection of the susceptible)
being realisations of i.i.d.~exponential random variables.  The distribution of the final outcome of the epidemic, and hence also its final size, is
invariant to this modification.  The process describing the evolution of the numbers of susceptibles of different degrees, infective half-edges and
recovered half-edges is an asymptotically density dependent population process (\cite{EK86}, Chapter 11, and~\cite{Pollett90}).  The asymptotic distribution
of the final outcome of the epidemic is studied by considering a boundary crossing problem for a random time-scale transformation
of that process.  The proofs extend,  at least in principle, to SIR epidemics and percolation on extensions of the configuration
model that include fully-connected cliques (\cite{Trapman:2007}, \cite{Gleeson:2009}, \cite{BST:2010} and~\cite{CL:2014}), though explicit calculation of the 
asymptotic variances may be difficult.

The remainder of the paper is organised as follows.  The MR and NSW random graphs are defined in Section~\ref{sec:network} and the
SIR epidemic model is described in Section~\ref{sec:epidemic}.  The main central limit theorems (Theorems~\ref{thm:posclt}-\ref{thm:zeroclt} for SIR epidemics and Theorem~\ref{thm:percclt} for percolation) are stated in Section~\ref{sec:mainresults}, together with some remarks giving comparisons of their variance parameters and applications to giant components 
indicated above.  Some numerical illustrations, which show that the central limit
theorems can yield good approximations even for relatively small graphs, are given in Section~\ref{sec:illustrations}.   The proofs are given in Section~\ref{sec:proofs}.
They make extensive use of asymptotically density dependent population processes and in particular require a version of the functional central limit 
theorem for such processes to include asymptotically random initial conditions.  For ease of reference, the required results for such processes are collected together in
Section~\ref{sec:DPPP}.  Some brief concluding comments are given in Section~\ref{sec:conc}. Calculation of the asymptotic variances for the central limit theorems is lengthy, though straightforward, so this and a few other details are deferred to an appendix.

\subsection{Notation}
\label{sec:notation}
All vectors are row vectors and $^{\top}$ denotes transpose.  With the dimension being obvious from the context, $I$ denotes an identity matrix and 
$\bzero$ and $\bone$ denotes vectors all of whose elements are $0$ and $1$, respectively.  For $x \in \mathbb{R}$, the usual floor and
ceiling functions are denoted by $\left \lfloor{x}\right \rfloor$ and $\left \lceil{x}\right \rceil$, respectively.  Thus $\left \lfloor{x}\right \rfloor$
is the greatest integer $\le x$ and $\left \lceil{x}\right \rceil$ is the smallest integer $\ge x$. For a  positive integer $k$, the 
$k$th derivative of a real-valued function $f$ is denoted by $f^{(k)}$.  The cardinality of a set $A$ is denoted by $|A|$.  Sums are zero if vaccuous.  We use $\convp$, $\convas$ and $\convD$ to denote convergence in probability, convergence almost sure and convergence in distribution, respectively.

Further, ${\rm U}(0,1)$ denotes a uniform random variable on $(0,1)$; ${\rm Exp}(1)$ denotes an exponential random variable with mean $1$;
${\rm N}(0,\sigma^2)$ denotes a univariate normal random variable with mean $0$ and variance $\sigma^2$; and ${\rm N}(\bzero,\Sigma)$ denotes
a multivariate normal random variable with mean vector $\bzero$ and variance matrix $\Sigma$, whose dimension again is obvious from the context.
For a positive integer $n$ and $p \in [0,1]$,  ${\rm Bin}(n,p)$ denotes a binomial random variable with $n$ trials and success probability $p$.
Also, if $I$ is a non-negative random variable and $\lambda \in (0,\infty)$ then ${\rm Bin}(n,1-\re^{-\lambda I})$ denotes a mixed-Binomial
random variable obtained by first sampling $I_1$ from the distribution of $I$ and then, given $I_1$, sampling independently from ${\rm Bin}(n,1-\re^{-\lambda I_1})$.
Similarly,  if $I$ is a non-negative random variable and $D$ is a non-negative integer-valued random variable, then ${\rm Bin}(D,1-\re^{-\lambda I})$
denotes a mixed-Binomial random variable, where the realisations of $D$ and $I$ are independent.  Thus, if $X \sim  {\rm Bin}(D,1-\re^{-\lambda I})$,
then 
\[
{\rm P}(X=k)=\sum_{d=k}^{\infty}{\rm P}(D=d){\rm E}\left[\binom{d}{k}(1-\re^{-\lambda I})^k \re^{-(d-k)\lambda I}\right] \qquad (k=0,1,\dots).
\]
Note that we allow the possibility $D=0$.

\section{Model and main results}
\label{sec:model}
\subsection{Random graph}
\label{sec:network}
Consider a population of $n$ indivdiuals labelled $1,2,\dots,n$.  For $i=1,2,\dots,n$, let $\Dn_i$ denote the 
degree of individual $i$.  We assume that $0 \le \Dn_i \le \dmax$ for all $i$, i.e.~that there is a maximum degree $\dmax$.  In the MR random graph the degrees are prescribed, while in the NSW random graph 
$\Dn_1,\Dn_2,\dots,\Dn_n$ are i.i.d. copies of a random variable $D$ having
probability mass function given by ${\rm P}(D=i)=p_i$ $(i=0,1,\dots,\dmax)$.  Under both models, the network (random graph) is formed by
attaching $\Dn_i$ half-edges to individual $i$, for $i=1,2,\dots,n$, and then pairing up the 
$\Dn_1+\Dn_2+\dots+\Dn_n$ half-edges uniformly at random to give the edges in the random graph, which we denote by $\GGn$.  In the NSW model, 
if $\Dn_1+\Dn_2+\dots+\Dn_n$ is odd there is a left-over stub, which is ignored.  (Of course in the MR model the prescribed degrees
can be chosen so that $\Dn_1+\Dn_2+\dots+\Dn_n$ is even.)  

We are interested in asymptotic results as 
the number of individuals $n \to \infty$.  In the MR random graph, for $i=0,1,\dots,\dmax$, let $\vn_i=\sum_{k=1}^n 1_{\{\Dn_k=i\}}$
be the number of individuals having degree $i$.  We assume that
\begin{equation}
\label{equ:vni}
\lim_{n \to \infty} \sqrt{n}\left(n^{-1} \vn_i-p_i\right)=0 \qquad (i=0,1,\dots, \dmax).
\end{equation}
Note that~\eqref{equ:vni} implies $\lim_{n \to \infty} n^{-1}\vn_i=p_i$ $(i=0,1,\dots, \dmax)$.

In both models the random graph may have some imperfections, specifically self-loops and multiple edges, but they are sparse in the 
network as $n \to \infty$; more precisely, the number of such imperfections converges in distrubution to a Poisson random variable
as $n \to \infty$ (\cite{Durrett:2007}, Theorem 3.1.2).  Moreover, \cite{Janson:2009b} implies that the probability that the random graph
is simple is bounded away from $0$, as $n \to \infty$, and law of large numbers results continue to hold if the graph is conditioned on being simple.  However,
that is not necessarily the case for convergence in distribution; see~\cite{Janson:2010}, Remark 1.4, and~\cite{BR:2017}, Remark 2.5.
Thus whether or not our central limit theorems continue to hold when the random graph is conditioned on being simple is an open question.

\subsection{SIR epidemic}
\label{sec:epidemic}
An SIR epidemic, denoted by $\EEn$, is constructed on the above network as follows.  Initially, at time $t=0$, a number of individuals are infective
and the remaining individuals are susceptible.  (Precise statements concerning the initial infectives are made later.)  Distinct infectives 
behave independently of each other and of the construction of the network.  Each infective remains infectious for a period of time 
that is distributed according to a random variable $I$, having an arbitrary but specified distribution,
after which it becomes recovered.
During its infectious period,  an infective contacts its neighbours in the network independently at the points of Poisson processes each
having rate $\lambda$, so the probability that a given neighbour is contacted is $p_I=1-\phi(\lambda)$, where $\phi(\theta)=\E[\exp(-\theta I)]$
$(\theta \ge 0)$ is the Laplace transform of $I$.  If a contacted individual is susceptible then it becomes an infective, otherwise nothing happens. 
The epidemic ends when there is no infective individual in the population.  

For $t \ge 0$, let $\Xn_i(t)$ be the number of degree-$i$ susceptible individuals at time $t$ ($i=0,1,\dots,\dmax$) and let
$\Yn(t)$ be the total number of infectives at time $t$.  Let $\taun=\inf\{t \ge 0:\Yn(t)=0\}$ be the time of the end of the epidemic.
Then $\Tn_i=\Xn_i(0)-\Xn_i(\taun)$ is the total number of degree-$i$ susceptibles that are infected by the epidemic. Let
$\Tn=\sum_{i=0}^{\dmax} \Tn_i$ be the total number of susceptibles infected by the epidemic, i.e.~the final size of the epdiemic.
We are primarily interested in the asymptotic distribution of $\Tn$ as $n \to \infty$.

The proofs allow for the possibility that $p_I=1$, i.e.~${\rm P}(I=\infty)=1$.  In that case the set of individuals that are 
infected during the epidemic $\EEn$ comprises all individuals in the components of $\GGn$ that contain at least one initial infective. 
Thus central limit theorems for the size of the giant component in MR and NSW random graphs follow immediately from our results; 
see Remark~\ref{rmk:giantclt} below.

\subsection{Bond and site percolation}
\label{sec:percolation}
In bond percolation on $\GGn$, each edge in $\GGn$ is deleted indpendently with probability $1-\pi$, while in site percolation $\GGn$,
each vertex (together with all incident edges) is deleted independently with probability $1-\pi$ (\cite{Janson:2009a}).  Interest is
often focused on the size, $\Cn$ say, of the largest connected component in the resulting graph.   

\subsection{Main results}
\label{sec:mainresults}
For $i=0,1,\dots,\dmax$, let $\an_i$ be the number of degree-$i$ initial infectives in the epidemic $\EEn$ and
let $\an=\sum_{i=0}^{\dmax} \an_i$ denote the total number of initial infectives.  In the epidemic on the MR random graph,
we assume that $\an_0, \an_1,\dots,\an_{\dmax}$ are prescribed.  In the epidemic on the NSW random graph, we assume that
$\an$ is prescribed and that the $\an$ initial infectives are chosen by sampling uniformly at random without replacement
from the $n$ individuals in the population.

Let $\epsilonn=n^{-1}\an$ and $\epsilonn_i=n^{-1}\an_i$ $(i=0,1,\dots,\dmax)$.  Suppose that $\epsilonn \to \epsilon$ as
$n \to \infty$ and that $\lim_{n \to \infty} \sqrt{n}(\epsilonn-\epsilon)=0$.  For the epidemic on the MR random graph,
suppose further that, for $i=0,1,\dots,\dmax$, there exists $\epsilon_i$ such that 
$\lim_{n \to \infty} \sqrt{n}(\epsilonn_i-\epsilon_i)=0$. For the epidemic on the NSW random graph, let
$\epsilon_i=\epsilon p_i$ ($i=0,1,\dots,\dmax$).  Let $\mud={\rm E}[D]$, $\sigma_D^2=\var(D)$ and, for $s \in [0,1]$,
$f_D(s)=\sum_{i=0}^{\dmax} p_i s^i$ and $\fde(s)=\sum_{i=0}^{\dmax} (p_i-\epsilon_i) s^i$.  Let $q_I=1-p_I=\phi(\lambda)$ be the probability 
that an infective fails to contact a given neighbour and $q_I^{(2)}=\phi(2\lambda)$ be the probability
that a given infective fails to contact two given neighbours.

The first theorem concerns the case $\epsilon>0$, so in the limit as $n \to \infty$ a strictly positive fraction of the population is initially infective. 
Let $\TnMR$ and $\TnNSW$ denote the final size of the epidemic
$\EEn$ on the MR and NSW random graphs, respectively.  Let $z$ be the unique solution in $[0,1)$ of
\begin{equation}
\label{equ:z}
z-q_I=\mud^{-1}p_I \fde^{(1)}(z)
\end{equation}
and 
\begin{equation}
\label{equ:rho}
\rho=1-\epsilon-\fde(z).
\end{equation}

\begin{theorem}
\label{thm:posclt}
Suppose that $p_i\epsilon_i>0$ for at least one $i>0$ and, if $p_I=1$ then $p_1-\epsilon_1>0$.  Then, as $n \to \infty$,
\begin{equation*}
\sqrt{n}\left(n^{-1}\TnMR-\rho\right) \convD {\rm N}(0,\sigmaMR)
\end{equation*}
and
\begin{equation*}
\sqrt{n}\left(n^{-1}\TnNSW-\rho\right) \convD {\rm N}(0,\sigmaNSW),
\end{equation*}
where
\begin{align}
\label{equ:sigmamr}
\sigmaMR&=h(z)^2\left\{\left[(p_Iq_I+2(z-q_I)^2\right]\mud-p_I^2\left[\fde^{(1)}(z^2)+z^2 \fde^{(2)}(z^2)\right]\right\}\\
&\,+h(z)\left[2p_Iz\fde^{(1)}(z^2)-(z-q_I)\mud\right]+1-\epsilon-\rho-\fde(z^2)\nonumber\\
&\,+\left(q_I^{(2)}-q_I^2\right)h(z)^2 \left[f_D^{(2)}(1)-\fde^{(2)}(z)\right],\nonumber
\end{align}
with 
\begin{equation}
\label{equ:hmr}
h(z)=\frac{p_I^{-1}(q_I-z)}{1-p_I\mud^{-1}\fde^{(2)}(z)},
\end{equation}
and
\begin{align}
\label{equ:sigmansw}
\sigmaNSW&=\frac{\rho(1-\epsilon-\rho)}{1-\epsilon}-h(z)(z-q_I)\left(1-2\epsilon+\frac{2\epsilon\rho}{1-\epsilon}\right)\mud\\
&\,+h(z)^2\left\{\left[p_Iq_I-2z(z-q_I)\right]\mud+(z-q_I)^2\left(\sigma_D^2+\frac{1-2\epsilon}{1-\epsilon}\mud^2\right)\right\}\nonumber\\
&\,+\left(q_I^{(2)}-q_I^2\right)h(z)^2 \left[f_D^{(2)}(1)-(1-\epsilon)f_D^{(2)}(z)\right],\nonumber
\end{align}
with 
\begin{equation}
\label{equ:hnsw}
h(z)=\frac{p_I^{-1}(q_I-z)}{1-p_I\mud^{-1}(1-\epsilon)f_D^{(2)}(z)}.
\end{equation}
\end{theorem}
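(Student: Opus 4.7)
The plan is to construct the graph $\GGn$ and the epidemic $\EEn$ simultaneously and to recast the joint construction as an asymptotically density-dependent population process in the sense of Section~\ref{sec:DPPP}. At the moment a susceptible of degree $d$ becomes infective I would independently mark each of its $d$ half-edges as ``infectious'' with probability $p_I$ and as ``recovered'' with probability $q_I$, and assign an independent $\mathrm{Exp}(1)$ timer to every infectious mark; the next event occurs at the smallest unused timer, at which point the corresponding infectious half-edge is paired uniformly at random with an unpaired half-edge of any type, possibly producing a new infective. This yields an epidemic whose final outcome agrees in distribution with the original $\EEn$, while making $\bXn(t)=(\Xn_0(t),\dots,\Xn_{\dmax}(t),\Ynet(t),\Znet(t))$ a continuous-time Markov chain whose rescaled form $n^{-1}\bXn$ is asymptotically density dependent.

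From the transition rates I would read off the limiting ODE and identify~\eqref{equ:z} as the equation satisfied by a convenient coordinate at the terminal time of the deterministic trajectory, then obtain $\rho=1-\epsilon-\fde(z)$ by evaluating the susceptible coordinates there. The final size of the epidemic is $\Tn=\sum_{i=0}^{\dmax}(\Xn_i(0)-\Xn_i(\taun))$ with $\taun=\inf\{t\ge 0:\Ynet(t)=0\}$. The hypothesis $p_i\epsilon_i>0$ for some $i>0$ ensures $\Ynet$ is initially of order $n$ and begins to decrease at rate $\Theta(n)$; the additional assumption when $p_I=1$ rules out the degenerate pure-SI case in which the limiting $\Ynet$-trajectory would stall at zero. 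Under these hypotheses the limit trajectory crosses zero at a time $t^{\star}$ with strictly negative drift, so $\taun\convp t^{\star}$.

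Next I would invoke the functional CLT from Section~\ref{sec:DPPP}, in its extension allowing asymptotically random initial conditions, to obtain a joint Gaussian limit for $\sqrt n(n^{-1}\bXn(t^{\star})-\bar{\bx}(t^{\star}),n^{-1}\bXn(0)-\bar{\bx}(0))$. Assumption~\eqref{equ:vni} makes the MR initial fluctuations $o(\sqrt n)$, so there only the jump-martingale contributes; for the NSW graph extra Gaussian noise enters through the multinomial fluctuations of the degree counts $\vn_i$ and through the uniform sampling of the $\an$ initial infectives and must be propagated through the diffusion. The map from $\bXn(t^{\star})$ to $\Tn$ is smooth away from the boundary crossing; I would handle the crossing by the standard device of applying a deterministic time change (as in Section~\ref{sec:DPPP}) that turns $\taun$ into a fixed time, equivalently linearising $\Ynet$ at $t^{\star}$ by the mean value theorem, and thereby transfer the joint CLT into a univariate CLT for $\sqrt n(n^{-1}\Tn-\rho)$. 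The variance is then a quadratic form in the limiting covariance matrix and the gradient of $(\Xn_0,\dots,\Xn_{\dmax})$ with respect to the reduced coordinates, with an overall factor $h(z)$ of~\eqref{equ:hmr} or~\eqref{equ:hnsw} appearing as the reciprocal of the $\Ynet$-drift at the hitting time after the time change.

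The main obstacle is the $(q_I^{(2)}-q_I^2)$ correction in~\eqref{equ:sigmamr} and~\eqref{equ:sigmansw}: a vertex's ``infectious'' marks on distinct half-edges are \emph{not} independent because they share a common realisation of the infectious period $I$, so $\cov(\mathbf 1_{\text{mark}_j},\mathbf 1_{\text{mark}_k})=q_I^{(2)}-q_I^2$ for $j\ne k$. This within-vertex correlation is invisible to the drift-plus-diffusion approximation of the density-dependent FCLT and must be incorporated by a separate covariance calculation that pairs half-edges on a common vertex, producing the $f_D^{(2)}(1)-\fde^{(2)}(z)$ term in $\sigmaMR$ and its $f_D^{(2)}(1)-(1-\epsilon)f_D^{(2)}(z)$ analogue in $\sigmaNSW$. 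Once this extra covariance is combined with the FCLT output, the remaining reduction to~\eqref{equ:sigmamr}--\eqref{equ:hnsw} is lengthy but routine Taylor-expansion bookkeeping in $\fde$ and $f_D$ around $z$ and $1$, which the author defers to the appendix.
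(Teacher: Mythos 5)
Your overall strategy --- simultaneous construction of the graph and epidemic, an asymptotically density dependent population process, a boundary-crossing CLT allowing asymptotically random initial conditions, and a quadratic-form evaluation of the variance --- is the same as the paper's, but there are two genuine gaps.

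First, your construction marks the half-edges of a newly infected vertex \emph{independently} with probability $p_I$, which changes the law of the process whenever $I$ is not constant; you acknowledge this but propose to repair it afterwards with ``a separate covariance calculation'' added to the FCLT output. That is not a proof, and the premise that the within-vertex correlation is ``invisible to the drift-plus-diffusion approximation'' is false. The paper's resolution is to make the entire batch of half-edges created by one infection a \emph{single} jump of the Markov chain: the transition $\bn \to \bn-\bes_i+(k-1)\bei+(i-k-1)\ber$ occurs with the mixed-binomial probability $p_{i,k}={\rm P}(X=k)$, $X\sim{\rm Bin}(i-1,1-\re^{-\lambda I})$, so the second moments $\sum_k(i-k-1)^2p_{i,k}$, which involve $q_I^{(2)}$, enter automatically through the diffusion matrix $G(\bx)=\sum_{\bl}\bl^{\top}\bl\,\beta_{\bl}(\bx)$. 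With independent marks the limiting Gaussian process itself is wrong, not merely missing an additive term, and there is no licence to bolt a correction onto the boundary-crossing functional after the fact.

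Second, your treatment of the hitting time does not work in the original time scale. The deterministic limit of the infective half-edge count reaches zero only asymptotically, so $\taun\convp\infty$ and the exit is tangential: Theorem~\ref{thm:hitting} needs a finite crossing time with $\nabla\varphi\cdot F<0$ there, and neither holds. The paper's remedy is the \emph{random} time-scale transformation $\An(t)=\int_0^t \Yne(u)/(\Xne(u)+\Yne(u)+\Zne(u)-1)\,{\rm d}u$, after which the rates lose their denominator, the transformed trajectory $\yet$ hits zero at a finite $\taut$ with $a(\taut)<0$, and Corollary~\ref{cor:hitting} applies. The ``deterministic time change (as in Section~\ref{sec:DPPP})'' you invoke does not appear in that section and would not produce a transversal crossing; the mean-value-theorem linearisation is exactly what fails when the crossing is degenerate. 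Both points must be repaired before the remaining ``routine bookkeeping'' can begin.
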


The second theorem concerns the case when the number of initial infectives is held fixed as $n \to \infty$,
so $\epsilon=0$.  More specifically, in the epidemic on the MR random graph, we assume that $\an_i=a_i$ $(i=0,1,\dots,\dmax)$
for all $n \ge a=\sum_{i=1}^{\dmax}a_i$ and in the epidemic on the NSW random graph, we assume that 
$\an=a$ for all  $n \ge a$.  It is well known that, for large $n$, the process of infectives in the early stages 
of such an epidemic can be approximated by a Galton-Watson branching process, $\B$ say, in which, except
for the initial generation, the offspring distribution is ${\rm Bin}(\tilde{D}-1,1-\re^{-\lambda I})$, where $\tilde{D}$ and $I$ are independent and $\tilde{D}$ has 
the size-biased degree distribution ${\rm P}( \tilde{D}=k)=\mud^{-1}k p_k$ $(k=1,2,\dots,\dmax)$; see, for example,
\cite{BS:2013}.  This offspring distribution has mean
\begin{equation}
\label{equ:Rzero}
R_0=\mudt p_I=\left(\mud+\mud^{-1}\sigma_D^2-1\right)p_I,
\end{equation}
where $\mudt={\rm E}[\tilde{D}-1]$.
The quantity $R_0$ is called the basic reproduction number of the epidemic. 

For $\EEn$, we say that a major outbreak occurs if and only if the event $\Gn=\{\Tn \ge \log n \}$ occurs.
Now $\lim_{n \to \infty}{\rm P}(\Gn)={\rm P}(B=\infty)$, where $B$ is the total progeny (not including the initial generation)
of the branching process $\B$ (cf.~Theorem~\ref{thm:BPapprox}).  Thus, in the limit as $n \to \infty$, a major outbreak occurs with non-zero probability
if and only if $R_0>1$.  

Suppose that that $R_0>1$.  Now let $z$ be the unique solution in $[0,1)$ of
\begin{equation}
\label{equ:zthm2}
z-q_I=\mud^{-1}p_I f_D^{(1)}(z)
\end{equation}
and $\rho=1-f_D(z)$.  Note that if $p_I \in (0,1)$, or $p_I=1$ and $p_1>0$, then $z>0$.

\begin{theorem}
\label{thm:majclt}
Suppose that $R_0>1$ and, if $p_I=1$ then $p_1>0$.  Then, as $n \to \infty$,
\begin{equation*}
\sqrt{n}\left(n^{-1}\TnMR-\rho\right)|\Gn \convD {\rm N}(0,\sigmatMR)
\end{equation*}
and
\begin{equation*}
\sqrt{n}\left(n^{-1}\TnNSW-\rho\right)|\Gn \convD {\rm N}(0,\sigmatNSW),
\end{equation*}
where $\sigmatMR$ is given by~\eqref{equ:sigmamr} and~\eqref{equ:hmr}, with $\fde$ replaced by $f_D$, and
$\sigmatNSW$ is obtained by setting $\epsilon=0$ in~\eqref{equ:sigmansw} and~\eqref{equ:hnsw}.
\end{theorem}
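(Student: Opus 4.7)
The plan is to reduce Theorem~\ref{thm:majclt} to the argument underlying Theorem~\ref{thm:posclt}, by splitting $\EEn$ into an initial branching-like phase and a subsequent density-dependent phase, and showing that only the latter contributes to the limiting variance.

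First, I would couple the early stages of $\EEn$ with the Galton--Watson branching process $\B$ introduced before~\eqref{equ:Rzero}. By a standard exploration argument (cf.\ Theorem~\ref{thm:BPapprox}), the infective process of $\EEn$ and $\B$ can be constructed on a common probability space so that they agree up to the first time either has produced a slowly-growing number $m_n$ of infectives, with $m_n\to\infty$ and $m_n=o(\sqrt n)$ (e.g.\ $m_n=\lfloor\log\log n\rfloor$). On the event that $\B$ goes extinct the coupling exhausts the outbreak and $\Tn=O_p(1)$; on the survival event of $\B$, which has probability $1-\P(B<\infty)$, the epidemic reaches $m_n$ infectives at the finite stopping time $\tauhatn=\inf\{t\ge 0:\Yn(t)\ge m_n\}$. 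Since $\P(\Gn)\to\P(B=\infty)$ and $\Tn\ge m_n$ once the survival coupling holds, conditioning on $\Gn$ is asymptotically equivalent to conditioning on survival of $\B$; because at most $O_p(m_n)$ individuals have been infected by $\tauhatn$, we have $\Xn_i(\tauhatn)=\vn_i-O_p(m_n)$ for each $i$ and $\Yn(\tauhatn)=m_n$.

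Second, I would restart the epidemic at time $\tauhatn$ and view $\{(\Xn(\tauhatn+t),\Yn(\tauhatn+t)):t\ge 0\}$ as an asymptotically density dependent population process in the sense of Section~\ref{sec:DPPP}, with scaled initial condition
\[
n^{-1}\bigl(\Xn(\tauhatn),\Yn(\tauhatn)\bigr)\convp (p_0,p_1,\dots,p_{\dmax},0).
\]
This is precisely the $\epsilon=0$ specialisation of the setup for Theorem~\ref{thm:posclt}, in which $\fde$ collapses to $f_D$. The initial perturbation $\|n^{-1}(\Xn(\tauhatn),\Yn(\tauhatn))-(\bp,0)\|$ has order $m_n/n=o(n^{-1/2})$, so on the CLT scale the initial condition is asymptotically deterministic, and the functional CLT of Section~\ref{sec:DPPP} (the version with asymptotically random initial conditions) applies. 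From there I would mimic the boundary-crossing strategy of the proof of Theorem~\ref{thm:posclt}: apply the random time change that makes the infective-half-edge process density dependent, express $\Tn$ via the hitting time of $0$ of the transformed process, and pass from the functional CLT to a univariate CLT by a delta-method argument at the crossing. This yields $\sqrt n(n^{-1}\Tn-\rho)\mid\Gn \convD {\rm N}(0,\sigma^2)$, with $\rho=1-f_D(z)$ and $z$ the unique root of~\eqref{equ:zthm2}; the variance $\sigma^2$ is, by construction, the same formula produced in Theorem~\ref{thm:posclt} with $\fde$ replaced throughout by $f_D$, giving $\sigmatMR$ in the MR case and $\sigmatNSW$ in the NSW case, where the additional $\sigma_D^2$ contribution in NSW again tracks the extra degree-sequence variability.

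The main obstacle, in my view, is to make the reduction rigorous. One has to verify that (a) the branching approximation is sharp enough that the contribution of the initial stochastic phase to $\Tn$ is $o_p(\sqrt n)$ and does not leak into the limiting variance; (b) the random time translation inherited from $\B$ on its survival event does not propagate into the final-size fluctuations, so that only the entry point into the density-dependent regime matters and not when entry occurs; and (c) the functional CLT of Section~\ref{sec:DPPP} can be invoked with the (random, but negligibly perturbed) initial conditions $(\Xn(\tauhatn),\Yn(\tauhatn))$ while correctly handling the coupling between those initial conditions and the random shift $\tauhatn$ on the event $\Gn$.
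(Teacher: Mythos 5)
Your overall strategy --- branching-process coupling for the early phase, then restarting and reusing the density-dependent boundary-crossing argument of Theorem~\ref{thm:posclt} with $\epsilon=0$ --- is the same as the paper's. However, there are two genuine gaps.

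First, your claim that the initial condition at $\tauhatn$ is asymptotically deterministic on the CLT scale is false for the NSW graph, and it is inconsistent with the variance you need to obtain. You correctly note $\Xn_i(\tauhatn)=\vn_i-O_p(m_n)$, but for the NSW model $\vn_i\sim{\rm Bin}(n,p_i)$, so $n^{-1}\vn_i-p_i$ is $O_p(n^{-1/2})$ and genuinely random at that scale; only the $O_p(m_n)$ correction is negligible. The paper's Theorem~\ref{thm:BPapprox}(b)(iv) records exactly this: $\sqrt{n}\left(n^{-1}\bXn(\tauhatn)-\bp\right)|A_{\rm ext}^C\convD{\rm N}(\bzero,\Sigma_{XX})$, and this non-degenerate $\Sigma_{XX}$ is fed into the functional CLT as $\Sigma_0$. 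It is precisely the source of the term $(1-\epsilon)\sigma_A^2$ in Appendix~\ref{app:sigma0NSW2} and hence of the gap $\sigmatNSW-\sigmatMR$. If you treat the initial condition as deterministic you obtain $\sigmatMR$ for both graphs, contradicting your own closing remark that the NSW variance carries an extra $\sigma_D^2$ contribution. (For the MR graph your claim is fine, since~\eqref{equ:vni} makes $n^{-1}\vn_i-p_i=o(n^{-1/2})$ by assumption.)

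Second, the boundary-crossing step cannot be invoked ``as in Theorem~\ref{thm:posclt}'' because after restarting at $\tauhatn$ the limiting initial state has $\yet(0)=0$, so $\varphi(\bwt(0))=0$ and the hypothesis $\varphi(\bx(0))>0$ of Theorem~\ref{thm:hitting} fails; equivalently, the equation $\yet(t)=0$ (see~\eqref{equ:taut1}) has a root at $t=0$ as well as at $\taut>0$, and nothing in your argument rules out the transformed process being absorbed near $t=0$. The paper handles this by (i) extending Theorem~\ref{thm:hitting} and Corollary~\ref{cor:hitting} to hitting times of the form $\inf\{t\ge\epsilon_0:\varphi(n^{-1}\bXn(t))\le 0\}$ for fixed $\epsilon_0>0$, and (ii) using Whittle's lower-bounding branching-process device to prove~\eqref{equ:tautneps}, i.e.\ that starting from at least $\log n$ infective half-edges the epidemic infects at least $n\delta$ individuals with conditional probability tending to $1$, so $\tautn$ does not fall below $\epsilon_0$. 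Your obstacles (a)--(c) gesture at related concerns but do not identify this degenerate-boundary issue, which is the main technical obstruction in passing from Theorem~\ref{thm:posclt} to Theorem~\ref{thm:majclt}.
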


The next theorem concerns the case when $\epsilon=0$ but the number of initial infectives $\an \to \infty$
as $n \to \infty$.  

\begin{theorem}
\label{thm:zeroclt}
Suppose that $R_0>1$, $\epsilon=0$, $\sum_{i=1}^{\dmax} \an_i \to \infty$ as $n \to \infty$ and, if  $p_I=1$ then $p_1>0$.  Then, as $n \to \infty$,
\begin{equation*}
\sqrt{n}\left(n^{-1}\TnMR-\rho\right) \convD {\rm N}(0,\sigmatMR)
\end{equation*}
and
\begin{equation*}
\sqrt{n}\left(n^{-1}\TnNSW-\rho\right) \convD {\rm N}(0,\sigmatNSW),
\end{equation*}
where $\sigmatMR$ and $\sigmatNSW$ are as in Theorem~\ref{thm:majclt}.
\end{theorem}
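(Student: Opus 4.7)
The plan is to reduce Theorem~\ref{thm:zeroclt} to Theorem~\ref{thm:majclt} by showing that, when $\sum_{i=1}^{\dmax}\an_i \to \infty$, (i) the probability of a major outbreak tends to $1$, so no conditioning on $\Gn$ is needed, and (ii) the additional stochasticity introduced by a growing number of seeds is $o_p(\sqrt{n})$ on the final-size scale and therefore does not affect the CLT.

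For (i), set $k_n = \lceil \log^2 n \rceil$ and couple the early evolution of $\EEn$ (restricted to seeds of positive degree, as degree-$0$ seeds play no role) with $\sum_{i=1}^{\dmax}\an_i$ approximately independent copies of the Galton--Watson branching process $\B$, the coupling being valid until the total number infected first reaches $k_n$; such couplings are standard for configuration-model epidemics (cf.~the discussion following~\eqref{equ:Rzero}). Since $R_0 > 1$ (and $p_1 > 0$ when $p_I = 1$), $\B$ has survival probability $\alpha \in (0,1)$, so each effective seed independently triggers a major outbreak with asymptotic probability at least $\alpha$, giving
\[
{\rm P}\bigl((\Gn)^c\bigr) \le (1-\alpha)^{\sum_{i=1}^{\dmax}\an_i}(1+o(1)) + o(1) \longrightarrow 0.
\]

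For (ii), introduce the stopping time $\sigma^{(n)} = \inf\{t \ge 0 : \Yn(t) \ge k_n\}$. By (i), ${\rm P}(\sigma^{(n)} < \infty) \to 1$, and the number of susceptibles consumed and half-edges exposed on $[0,\sigma^{(n)}]$ is $O_p(k_n) = o_p(\sqrt{n})$. Beyond $\sigma^{(n)}$, the degree-stratified susceptible counts together with the infective- and recovered-half-edge counts form the same asymptotically density-dependent population process as in the proof of Theorem~\ref{thm:majclt}, with an initial condition at $\sigma^{(n)}$ differing from the deterministic starting point used there by $o_p(\sqrt{n})$ after rescaling by $n$. Applying the functional CLT of Section~\ref{sec:DPPP} on $[\sigma^{(n)},\taun]$, together with the random time-change representation of the boundary-crossing problem underlying the final size, then yields
\[
\sqrt{n}\bigl(n^{-1}\TnMR - \rho\bigr) \convD {\rm N}(0,\sigmatMR), \qquad \sqrt{n}\bigl(n^{-1}\TnNSW - \rho\bigr) \convD {\rm N}(0,\sigmatNSW).
\]
The random time translation by $\sigma^{(n)}$ is harmless, since $\Tn$ is a functional of the trajectory and not of absolute time.

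The main obstacle is showing that the $o_p(\sqrt{n})$ perturbation of the initial condition at $\sigma^{(n)}$, relative to the deterministic starting point used in the proof of Theorem~\ref{thm:majclt}, induces only an $o_p(\sqrt{n})$ perturbation of $\Tn$. This requires a Lipschitz/stability estimate for the hitting location of the density-dependent trajectory on the boundary where the infective-half-edge density vanishes; away from the critical degenerate case this is essentially the same transversality argument that appears inside the proof of Theorem~\ref{thm:majclt}. A secondary issue for the NSW graph is that the $\an$ initial infectives are sampled uniformly without replacement from a population with i.i.d.~degrees, and one must check that the induced perturbation of the residual susceptible degree distribution is $o_p(\sqrt{n})$; since $\an = o(n)$ this follows from elementary binomial concentration and feeds through the same stability estimate.
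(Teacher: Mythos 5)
Your proposal is correct and follows essentially the same route as the paper, which proves Theorem~\ref{thm:zeroclt} by noting that the argument of Section~\ref{sec:epilogn} (branching-process coupling up to $O(\log n)$ infective half-edges, then the time-changed asymptotically density-dependent process, the boundary-crossing corollary, and the Whittle lower-bounding device to exclude the trivial root of the hitting equation) carries over unchanged, the only difference being that conditioning on a major outbreak is unnecessary because the probability that all $\sum_{i=1}^{\dmax}\an_i \to \infty$ positive-degree seed lineages die out tends to zero. One minor point: in your final paragraph the claim that removing the seeds perturbs the residual susceptible degree counts by $o_p(\sqrt{n})$ requires $\an=o(\sqrt{n})$ rather than merely $\an=o(n)$ (the mean shift in the degree-$i$ count is of order $\an p_i$); this is supplied by the standing assumption $\sqrt{n}\,\epsilonn\to 0$ of Section~\ref{sec:mainresults}, so the gap is only in the cited justification, not in the result.
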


\begin{remark}
Note that $q_I^{(2)}=q_I^2$ if $I$ is almost surely constant, otherwise $q_I^{(2)}>q_I^2$ by
Jensen's inequality.  Also $f_D^{(2)}(1)-\fde^{(2)}(z)>0$,
so as one would expect on intuitive grounds, if $p_I$ is held fixed, the asymptotic variance
$\sigmaMR$ is smallest when the infectious period is constant.  A similar comment holds for 
$\sigmaNSW, \sigmatMR$ and $\sigmatNSW$.
\end{remark}

\begin{remark}
\label{rmk:varcomp}
Although it is not transparent from~\eqref{equ:sigmamr} and~\eqref{equ:sigmansw}, it is
seen easily from the proof that, again as one would expect on intuitive grounds, $\sigmaNSW \ge\sigmaMR$ and $\sigmatNSW \ge \sigmatMR$,
with strict inequalities unless the support of the degree random variable $D$ is concentrated on a single point (when the two models are identical); see  Appendix~\ref{app:proofrmkvarcomp}.
\end{remark}

\begin{remark}
\label{rmk:giantclt}
In the setting of Theorem~\ref{thm:majclt}, if $\an=1$ and $p_I=1$ then with probability tending to $1$ as $n \to \infty$, the event $\Gn$
occurs if and only if the initial infective belongs to the giant component of $\GGn$.  Thus setting $p_I=1$ in Theorem~\ref{thm:majclt}
yields central limit theorems for giant components of MR and NSW random graphs.
\end{remark}

The final theorem is concerned with percolation.  Let $R_0$ be given by~\eqref{equ:Rzero} with $p_I=\pi$. Let $\CnMRbond$ and $\CnMRsite$ denote the
size of the largest connected component after bond and site percolation, respectively, on an MR graph; define $\CnNSWbond$ and  
$\CnNSWsite$ analogously for the NSW graph.  Then, if $R_0>1$, for each of these four choices for $\Cn$, there exists $\epsilon>0$
such that $\lim_{n \to \infty}{\rm P}(\Cn \ge \epsilon n)=1$; see \cite{Janson:2009a}, Theorems 3.5 and 3.9, which also give law of
large number limits for the $\Cn$ under weaker conditions than here.  The following theorem gives associated central limit theorems.

\begin{theorem}
\label{thm:percclt}
Suppose that $\pi \in (0,1)$ and $R_0>1$.  Let $z$ be the unique solutiuon in $(0,1)$ of
\begin{equation}
\label{equ:zperc}
(z-1+\pi)=\mud^{-1}\pi f_D^{(1)}(z)
\end{equation}
and $\rho=1-f_D(z)$.
Then, as $n \to \infty$,
\begin{equation}
\label{equ:MRbondclt}
\sqrt{n}\left(n^{-1}\CnMRbond-\rho\right) \convD {\rm N}(0,\sigmaMRbond),
\end{equation}
\begin{equation}
\label{equ:NSWbondclt}
\sqrt{n}\left(n^{-1}\CnNSWbond-\rho\right) \convD {\rm N}(0,\sigmaNSWbond),
\end{equation}
\begin{equation}
\label{equ:MRsiteclt}
\sqrt{n}\left(n^{-1}\CnMRsite-\pi\rho\right) \convD {\rm N}(0,\sigmaMRsite),
\end{equation}
and
\begin{equation}
\label{equ:NSWsiteclt}
\sqrt{n}\left(n^{-1}\CnNSWsite-\pi\rho\right) \convD {\rm N}(0,\sigmaNSWsite),
\end{equation}
where
\begin{align}
\label{equ:sigmamrbond}
\sigmaMRbond&=h(z)^2\left\{\left[\pi(1-\pi)+2(z-1+\pi)^2\right]\mud-\pi^2\left[f_D^{(1)}(z^2)+z^2 f_D^{(2)}(z^2)\right]\right\}\\
&\,+h(z)\left[2\pi z f_D^{(1)}(z^2)-(z-1+\pi)\mud\right]+1-\rho-f_D(z^2)\nonumber
\end{align}
and
\begin{align}
\sigmaNSWbond\label{equ:sigmanswbond}
&=\rho(1-\rho)-h(z)(z-1+\pi)\mud\\
&\,+h(z)^2\left\{\left[\pi(1-\pi)-2z(z-1+\pi)\right]\mud+(z-1+\pi)^2\left(\sigma_D^2+\mud^2\right)\right\},\nonumber
\end{align}

with
\begin{equation*}
h(z)=\frac{\pi^{-1}(1-\pi-z)}{1-\pi\mud^{-1}f_D^{(2)}(z)};
\end{equation*}
\begin{align}
\label{equ:sigmamrsite}
\sigmaMRsite&=\pi^2\left\{\sigmaMRbond+\pi(1-\pi)h(z)^2\left[f_D^{(2)}(1)-f_D^{(2)}(z)\right]\right\}\\
&\qquad+\pi(1-\pi)[\rho-2h(z)(1-z)\mud]
\end{align}
and $\sigmaNSWsite$ is given by~\eqref{equ:sigmamrsite}, with $\sigmaMRbond$ replaced by $\sigmaNSWbond$.
\end{theorem}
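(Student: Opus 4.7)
For~\eqref{equ:MRbondclt} and~\eqref{equ:NSWbondclt} I invoke the classical equivalence (\cite{Durrett:2007}) between bond percolation with retention probability $\pi$ on $\GGn$ and the SIR epidemic on $\GGn$ with constant infectious period $I \equiv 1$ and rate $\lambda = -\log(1-\pi)$, so that $p_I = \pi$ and $q_I = 1-\pi$. Starting this SIR from a single uniformly chosen initial infective, and conditioning on the major outbreak event $\Gn$, the initial infective lies in the (asymptotically unique) giant component with probability tending to $1$, so that the SIR final size $\Tn$ equals $\CnMRbond$ (resp.\ $\CnNSWbond$) up to an $o_{\mathrm P}(1)$ error. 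Theorem~\ref{thm:majclt} with $a=1$ then applies: substituting $q_I = 1-\pi$ in~\eqref{equ:zthm2} reproduces~\eqref{equ:zperc}, and because $I$ is almost surely constant we have $q_I^{(2)} = q_I^2$, so the final $(q_I^{(2)}-q_I^2)$ term in $\sigmatMR$ (and in $\sigmatNSW$) vanishes, reducing these to $\sigmaMRbond$ and $\sigmaNSWbond$.

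\textbf{Site percolation.} For~\eqref{equ:MRsiteclt} and~\eqref{equ:NSWsiteclt} the reduction is less immediate, because deleting a vertex removes all its incident edges in correlation. The key idea is to fit site percolation into the SIR framework via a two-point infectious period distribution: let $I = \infty$ with probability $\pi$ and $I = 0$ with probability $1-\pi$, so that $p_I = \pi$, $q_I = 1-\pi$, and $q_I^{(2)}-q_I^2 = \pi(1-\pi)$. A vertex with $I = \infty$ (``kept'') contacts every neighbour, a vertex with $I = 0$ (``removed'') contacts none, so all half-edges of a vertex are simultaneously blocked or open, exactly as in site percolation. The percolation cluster of an initial kept vertex coincides with the subset $\{v \in \mathcal I : I_v = \infty\}$ of the SIR-infected set $\mathcal I$, with limiting size $\pi\rho n$, the factor $\pi$ coming from keeping each discovered vertex independently. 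To obtain a joint CLT for $(\Tn,\CnMRsite)$ I enlarge the exploration used in the proof of Theorem~\ref{thm:majclt} so that the associated asymptotically density-dependent population process of Section~\ref{sec:DPPP} tracks, for each newly encountered vertex, both its degree and its independent $\{0,\infty\}$ mark.

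\textbf{Variance assembly and obstacle.} From this joint CLT the asymptotic variance of $\sqrt{n}(n^{-1}\CnMRsite - \pi\rho)$ decomposes into three pieces: $\pi^2$ times the variance of $\sqrt{n}(n^{-1}\Tn - \rho)$, which by Theorem~\ref{thm:majclt} (with $f_D$ in place of $\fde$) equals $\sigmaMRbond + \pi(1-\pi)h(z)^2[f_D^{(2)}(1)-f_D^{(2)}(z)]$ and reproduces the braced term of~\eqref{equ:sigmamrsite}; a binomial contribution $\pi(1-\pi)\rho$ from the independent Bernoulli marks of cluster vertices; and a cross-covariance $-2\pi(1-\pi)h(z)(1-z)\mud$ arising because the cluster-size fluctuations and the within-cluster marking are correlated through the exploration. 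The NSW version follows identically upon replacing $\sigmaMRbond$ by $\sigmaNSWbond$ and $\sigmatMR$ by $\sigmatNSW$. The main obstacle is the matching step: explicitly extracting the three contributions from the Lyapunov covariance of the enlarged density-dependent process and verifying that their sum equals the stated closed form~\eqref{equ:sigmamrsite}. This calculation is routine but lengthy, and I would carry it out in an appendix alongside the variance derivations already announced for Theorems~\ref{thm:posclt}--\ref{thm:zeroclt}.
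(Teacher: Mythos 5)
Your proposal is correct and follows essentially the same route as the paper: bond percolation is reduced to Theorem~\ref{thm:majclt} via the SIR model with $I\equiv 1$ and $\lambda=-\log(1-\pi)$ (so $q_I^{(2)}=q_I^2$ kills the final variance term), and site percolation is handled via the two-point law ${\rm P}(I=\infty)=\pi=1-{\rm P}(I=0)$ with the density-dependent process augmented by a coordinate counting infected vertices having $I=\infty$, exactly as in the paper's construction of $\{\bWnhat(t)\}$. Your three-way variance decomposition ($\pi^2$ times the epidemic variance, the Bernoulli-mark term $\pi(1-\pi)\rho$, and the cross term $-2\pi(1-\pi)h(z)(1-z)\mud$) matches the paper's Appendix~\ref{app:asymvarperc} computation term for term.
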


\section{Illustrations}
\label{sec:illustrations}
In this section, we illustrate the central limit theorems in Section~\ref{sec:mainresults} by using simulations to explore their
applicability to graphs with finite $n$.  We also investigate briefly the dependence of the limiting variances in the central limit theorems on
the degree distribution, graph type and infectious period distribution.  We consider four degree distributions: 
\begin{enumerate}
\item[(i)]
$D \equiv d$, i.e.~$D$ is constant with $p_d=1$;
\item[(ii)]
$D \sim {\rm Po}(\mud)$, i.e.~$D$ is Poisson with mean $\mud$;
\item[(iii)]
$D \sim {\rm Geom}(p)$, i.e.~$p_k=(1-p)^k p$ $(k=0,1,\dots)$;
\item[(iv)]
$D \sim {\rm Power}(\alpha,\kappa)$, i.e.~$p_k=ck^{-\alpha}\re^{-\frac{k}{\kappa}}$ $(k=1,2,\dots)$, where $\alpha, \kappa \in (0,\infty)$
and the normalising constant $c={\rm Li}_{\alpha}(\re^{-\frac{1}{\kappa}})$, with ${\rm Li}_{\alpha}$ being the polylogarithm function.
\end{enumerate}
The fourth distribution is a power law with exponential cut-off (see, for example,~\cite{Newman:2002}) that has been used extensively in the physics literature. Note that, with $\theta=\re^{-\frac{1}{\kappa}}$ and $\beta={\rm Li}_{\alpha}(\theta)$, $f_D(s)=\beta^{-1}{\rm Li}_{\alpha}(\theta s)$, $f_D^{(1)}(s)=\frac{1}{\beta s}{\rm Li}_{\alpha-1}(\theta s)$ and $f_D^{(2)}(s)=\frac{1}{\beta s^2}[{\rm Li}_{\alpha-2}(\theta s)-
{\rm Li}_{\alpha-1}(\theta s)]$, which enables $R_0$ and the asymptotic means and variances in the central limit theorems to be calculated.
Distributions (ii)-(iv) have unbounded support, so do not satisfy the conditions of our theorems.  The asymptotic ditributions in this section are calculated under the assumption that the theorems still apply.

Each simulation consists of first simulating one graph, by simulating $\Dn_1, \Dn_2,\dots,\Dn_n$ and pairing up the half-edges, and then
simulating one epidemic, or percolation process, on it.  For an MR graph with (limiting) degree random variable $D$ on $n$ nodes,
the degrees are given by $\Dn_i=\inf\{d \in \mathbb{Z}_+:F_D(d)>i/(n+1)\}$ $(i=1,2,\dots,n)$, where $F_D$ is the distribution function
of $D$.  For heavy-tailed $D$, in particular, this choice of MR degrees converges faster with $n$ to the intended $D$ than one based on
rounding $n p_i$ $(i=0,1,\dots)$ to nearest integers.  Two choices of infectious period distribution are used in the
simulations: (i) $I \equiv 1$ (i.e.~$\P(I=1)=1$) and (ii) $I$ is $0$ or $\infty$, matched to have a common $p_I$.  Note that these yield the minimum and 
maximum asymptotic variances for fixed $p_I$.
The parameters of the degree distributions are chosen so that $\mud=5$.

We first consider epidemics on an NSW network in which a fraction $\epsilon=0.05$ of the population
is initially infective. Table~\ref{table:PF} shows estimates of $\rho_n=n^{-1}\E[\TnNSW]$ and 
$\sigma_n^2=n^{-1}\var[\TnNSW]$ for epidemics with $p_I=0.3$ and various population size $n$, based on 100,000 simulations for
each set of parameters, together with the corresponding asymptotic ($n=\infty$) values given by Theorem~\ref{thm:posclt}.
The table indicates that the asymptotic approximations are useful for even moderate $n$.  The approximations 
are better for the model with $I \equiv 1$ than that with $I= 0$ or $\infty$, as one might expect since there is less
variability in the process with $I \equiv 1$, and improve with increasing $\var(D)$.  The approximations are noticeably
worse when $n=200$ than for the other values of $n$.  Histograms of the final size of 100,000 simulated epidemics on
an NSW network with $n=200$ and $D \sim {\rm Geom}(1/6)$, together with the corresponding density of ${\rm N}(n\rho,n\sigmaNSW)$
with $\rho$ and $\sigmaNSW$ given by Theorem~\ref{thm:posclt}, are shown in Figure~\ref{fig:PFgeom}.  For the epidemic with
$I \equiv 1$, the asymptotic normal distrbution gives an excellent approximation, even though $n$ is rather small.  The
approximation is markedly worse for model with  $I= 0$ or $\infty$, owing to the increased likelihood of small outbreaks.
If $\epsilon$ is held fixed, the probability of a small outbreak decreases approximately exponentially with $n$, as the number of initial infectives is proportional to $n$, and the approximation improves significantly, particularly in the $I= 0$ or $\infty$ model.

\begin{table}
\begin{center}
\[ \begin{array}{|c|c|c|c|c|c|} 
\hline 
& & \multicolumn{2}{|c|}{I \equiv 1} & \multicolumn{2}{|c|}{I=0\mbox{ or }\infty}
\\ \cline{3-6} \mbox{D} &n & \rho_n & \sigma^2_n & \rho_n & \sigma^2_n  \\
\hline
& 200 & 0.5097 & 2.6355& 0.4259  & 9.9771  \\
 & 500 &  0.5284  & 2.3239& 0.4957  & 9.6846  \\
 & 1000 & 0.5334  &2.2180& 0.5200  & 8.0686  \\
D \equiv 5  & 2000 &  0.5360 &  2.1794& 0.5295 & 7.2082  \\
 & 5000 &  0.5374 & 2.1377& 0.5351  & 6.7611  \\ 
 & 10000 &  0.5379 & 2.1177& 0.5366  & 6.6402  \\
 & \infty &  0.5384  & 2.1187 & 0.5384  & 6.5200  \\
\hline 
 & 200 & 0.5659  & 1.3149 & 0.4624 & 10.2610  \\
 & 500 &  0.5761 & 1.0808 &0.5504 & 6.6926  \\
 & 1000 & 0.5789 & 1.0416 & 0.5708  & 4.0793  \\
\mbox{Po}(5)  & 2000 &  0.5803 & 1.0204 & 0.5766 & 3.5567 \\
  & 5000 &  0.5811 & 1.0095& 0.5798   & 3.3708  \\ 
  & 10000 &  0.5814 & 1.0058& 0.5807 & 3.3139  \\
   & \infty &  0.5817  & 1.0044 & 0.5817 & 3.2505  \\
\hline
 & 200 & 0.5160 & 0.4029 & 0.4132 & 8.5412  \\
 & 500 &  0.5184  & 0.3698& 0.5055 & 3.0416  \\
 & 1000 & 0.5189  & 0.3690 &0.5163 & 1.1780  \\
\mbox{Geom}(1/6)  & 2000 &  0.5194  & 0.3635 & 0.5180 & 1.0824  \\
  & 5000 &  0.5196   & 0.3635& 0.5190  & 1.0607  \\ 
  & 10000 &  0.5196  & 0.3653 & 0.5194 & 1.0438  \\
  & \infty &  0.5197 & 0.3650& 0.5197 & 1.0381  \\
\hline
& 200 & 0.4957 & 0.4826 & 0.3900  & 8.8942  \\
 & 500 &  0.4985 & 0.4260& 0.4828  & 4.1147  \\
 & 1000 & 0.4992 &  0.4237 & 0.4960  & 1.8536  \\
\mbox{Power}(1,13.796)  & 2000 &  0.4996 & 0.4177& 0.4981 & 1.6862  \\
  & 5000 &  0.4999 & 0.4164 & 0.4992& 1.6534  \\ 
  & 10000 &  0.4999 &  0.4182& 0.4996  & 1.6483  \\
  & \infty &  0.5000 & 0.4180 & 0.5000  & 1.6372  \\
\hline
\end{array} \]
\end{center}
\caption{ Simulation results against
theoretical (asymptotic) calculations for final size of epidemics with $\epsilon=0.05$ and $p_I=0.3$ on
NSW networks.}
\label{table:PF} 
\end{table}

Turning to Theorem~\ref{thm:majclt}, Figure~\ref{fig:majclt} shows simulations of the final size of epidemics on an NSW network with one initial infective and $I$ constant.  Note that with a single initial infective there is always a non-negligible probability of a minor outbreak, even if $n$ is large.  As $n \to \infty$, the probability of a major epidemic converges to the survival probability,
$p_{\rm maj}$ say, of a branching process; see, for example,~\cite{BS:2013}, Section 2.1.1. Theorem~\ref{thm:BPapprox}. Moreover,
as $I$ is constant, $p_{\rm maj}=\rho$; see, for example, \cite{Kenah07}.
Superimposed on each histogram in  Figure~\ref{fig:majclt} is the density of ${\rm N}(n\rho,n\sigmatNSW)$ multiplied by
$p_{\rm maj}$, which approximates the component of the distribution of $\TnNSW$ corresponding to a major outbreak.
Even with the very small $n$, the approximation is very good when $D \sim {\rm Geom}(1/6)$ or $D \sim {\rm Power}(1,13.796)$.
For both of these degree distributions there is a clear distiction between major and minor outbreaks.  That is not the case for the other two degree distributions, though the approximation is still useful.

\begin{figure}
\begin{center}
\resizebox{0.49\textwidth}{!}{\includegraphics{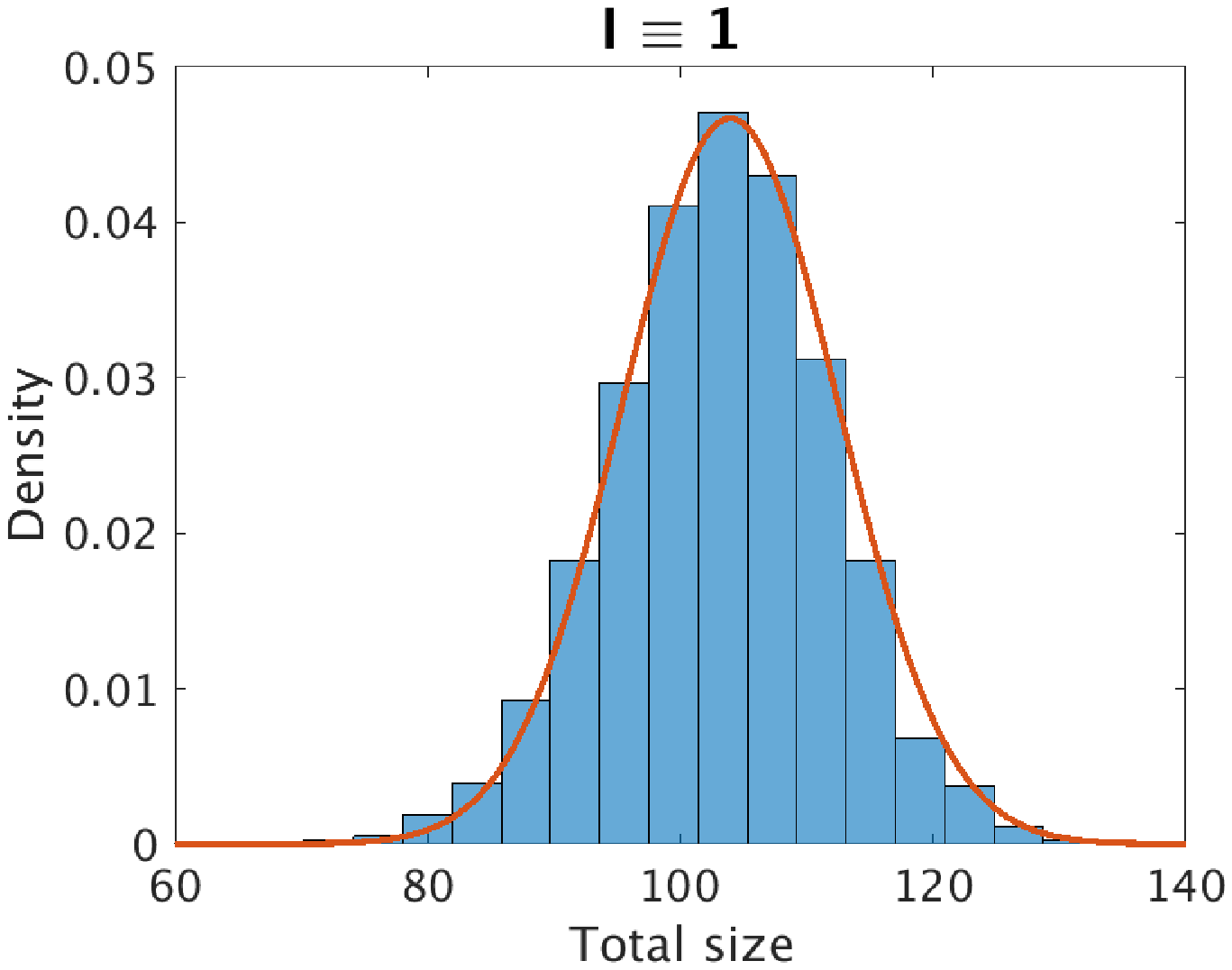}}
\resizebox{0.49\textwidth}{!}{\includegraphics{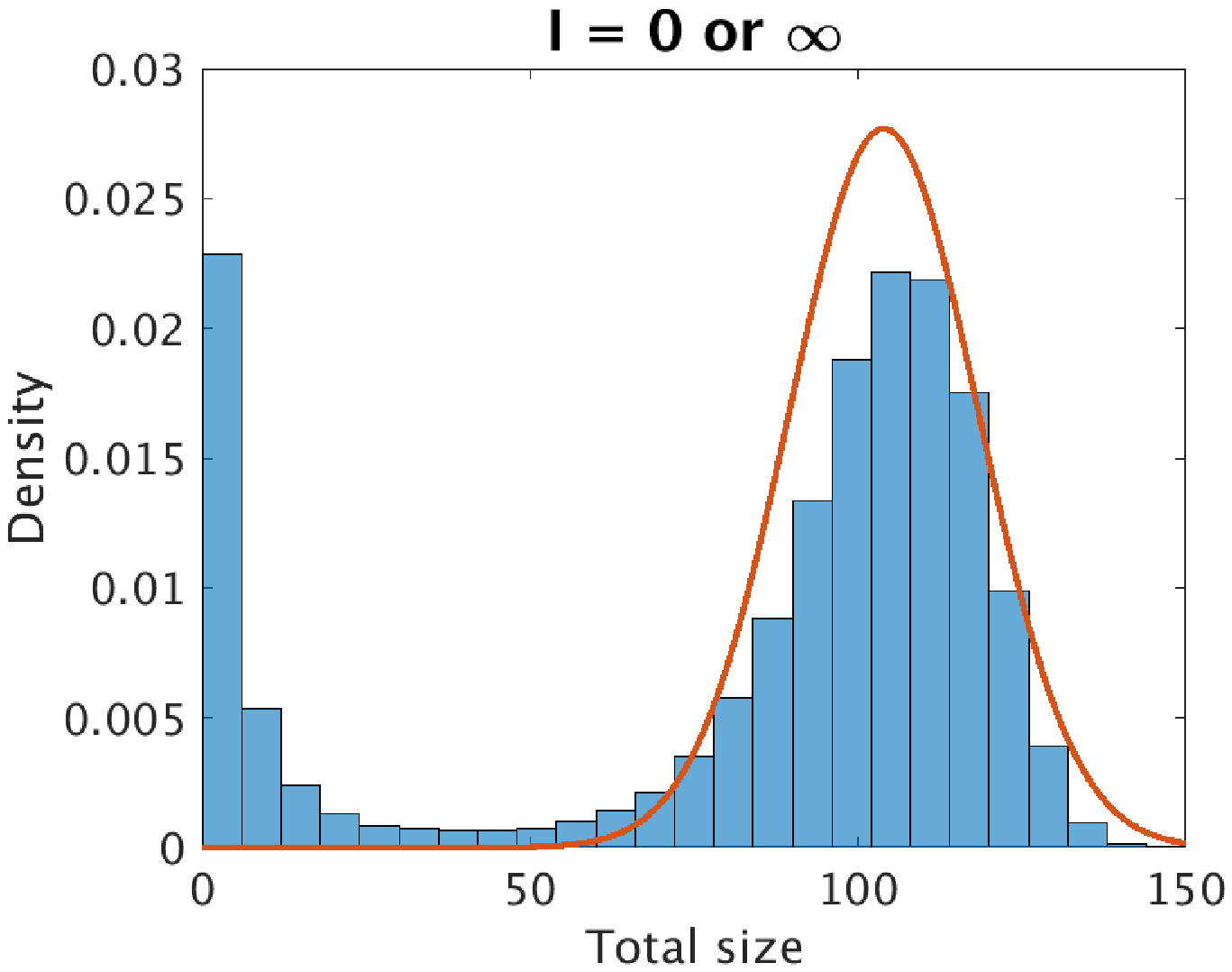}}
\end{center}
\caption{Histograms of 100,000 simulations of final size for epidemics with $n=200, \epsilon=0.05$ and $p_I=0.3$
on NSW networks with $D \sim {\rm Geom}(1/6)$, with asymptotic normal approximation superimposed.}
\label{fig:PFgeom}
\end{figure}

\begin{figure}
\begin{center}
\resizebox{\hfigwidth}{!}{\includegraphics{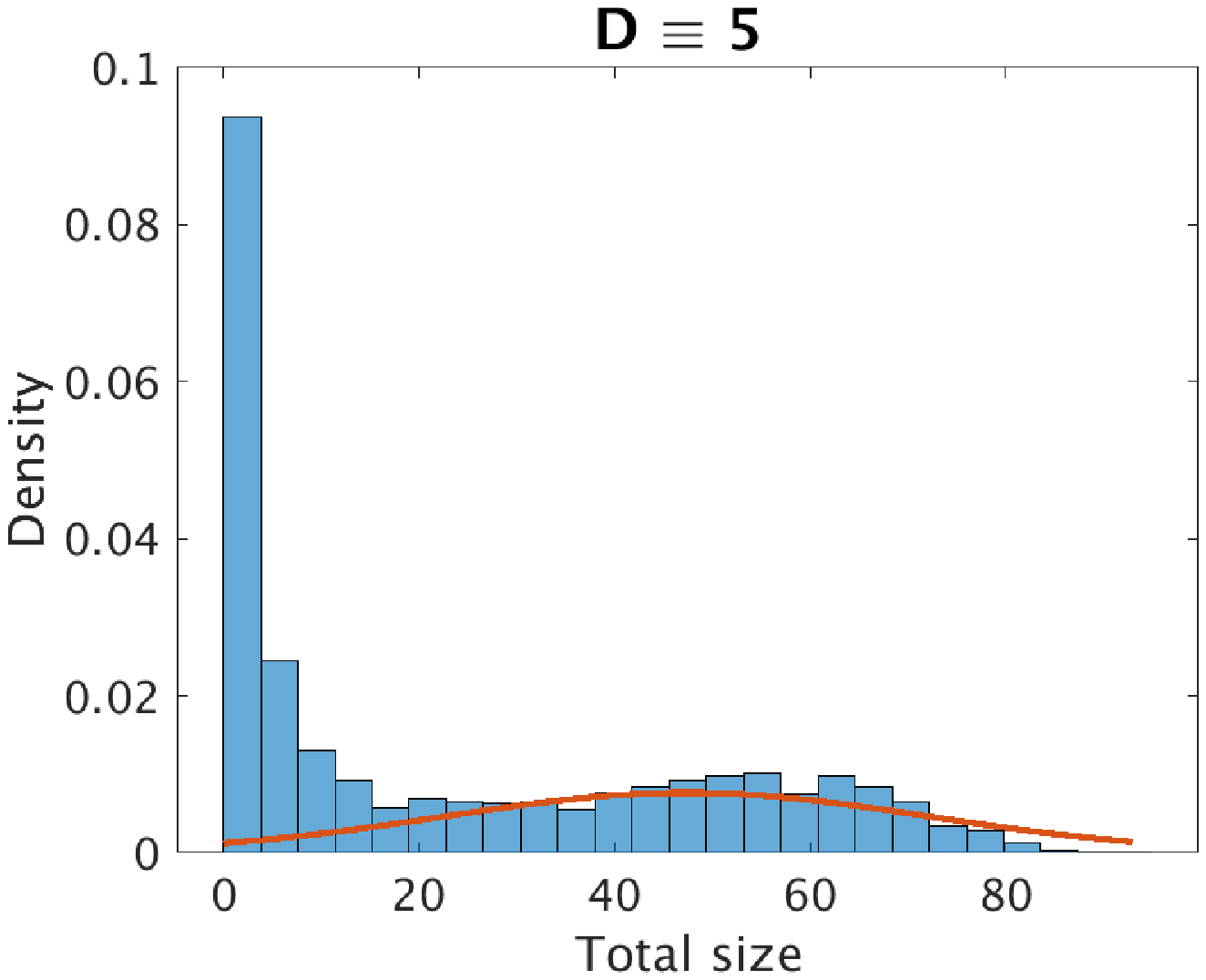}}
\resizebox{\hfigwidth}{!}{\includegraphics{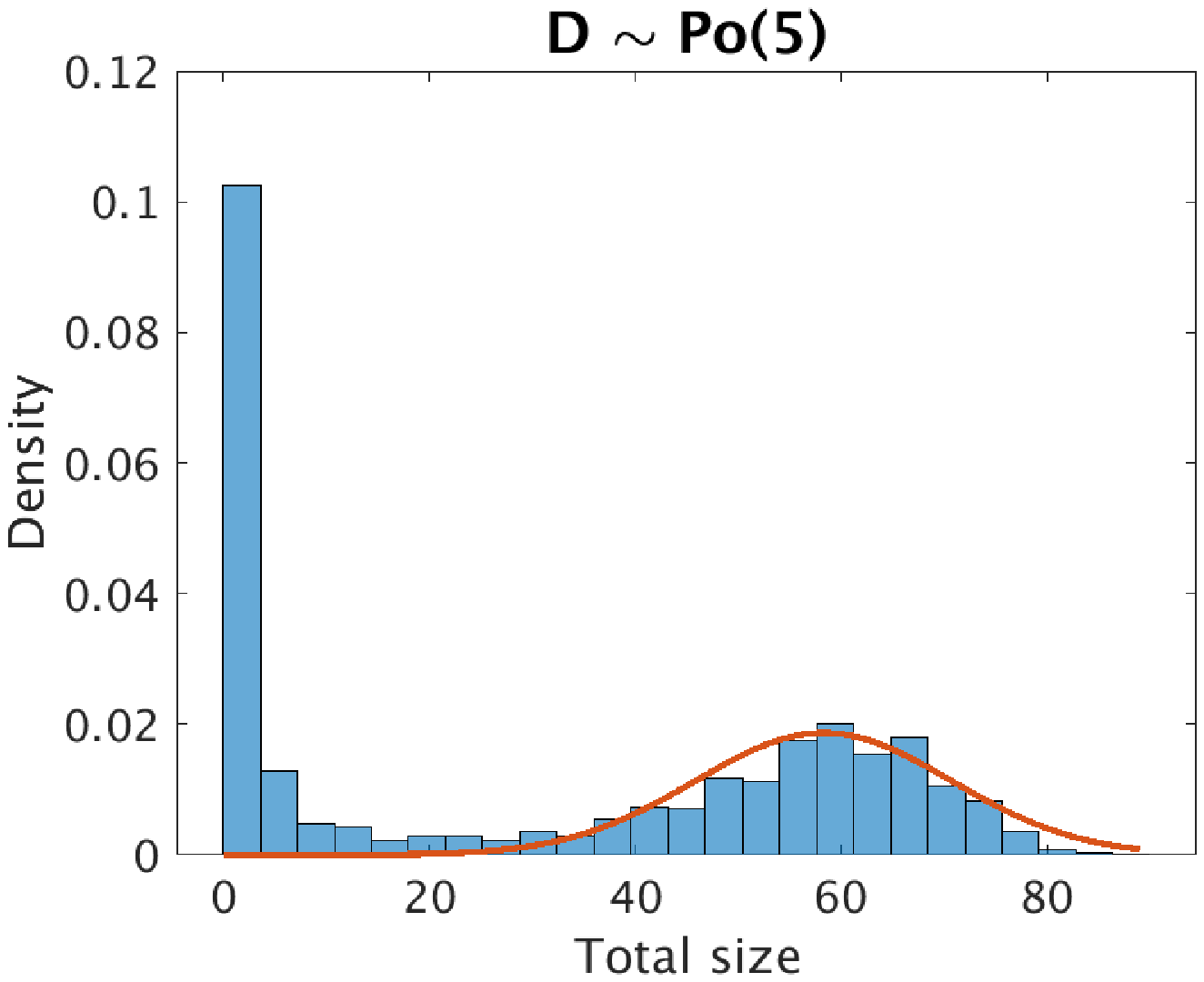}}\\
\resizebox{\hfigwidth}{!}{\includegraphics{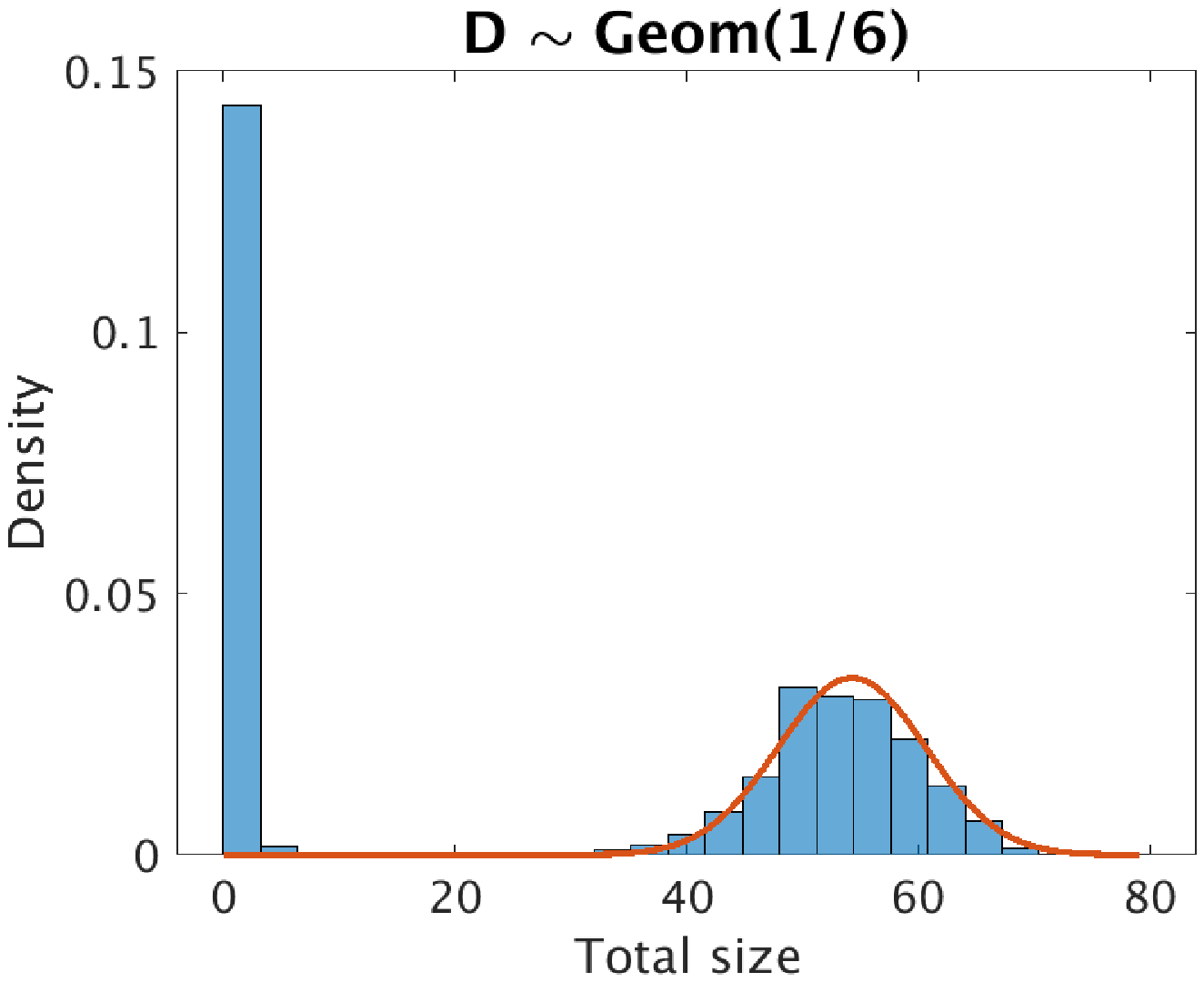}}
\resizebox{\hfigwidth}{!}{\includegraphics{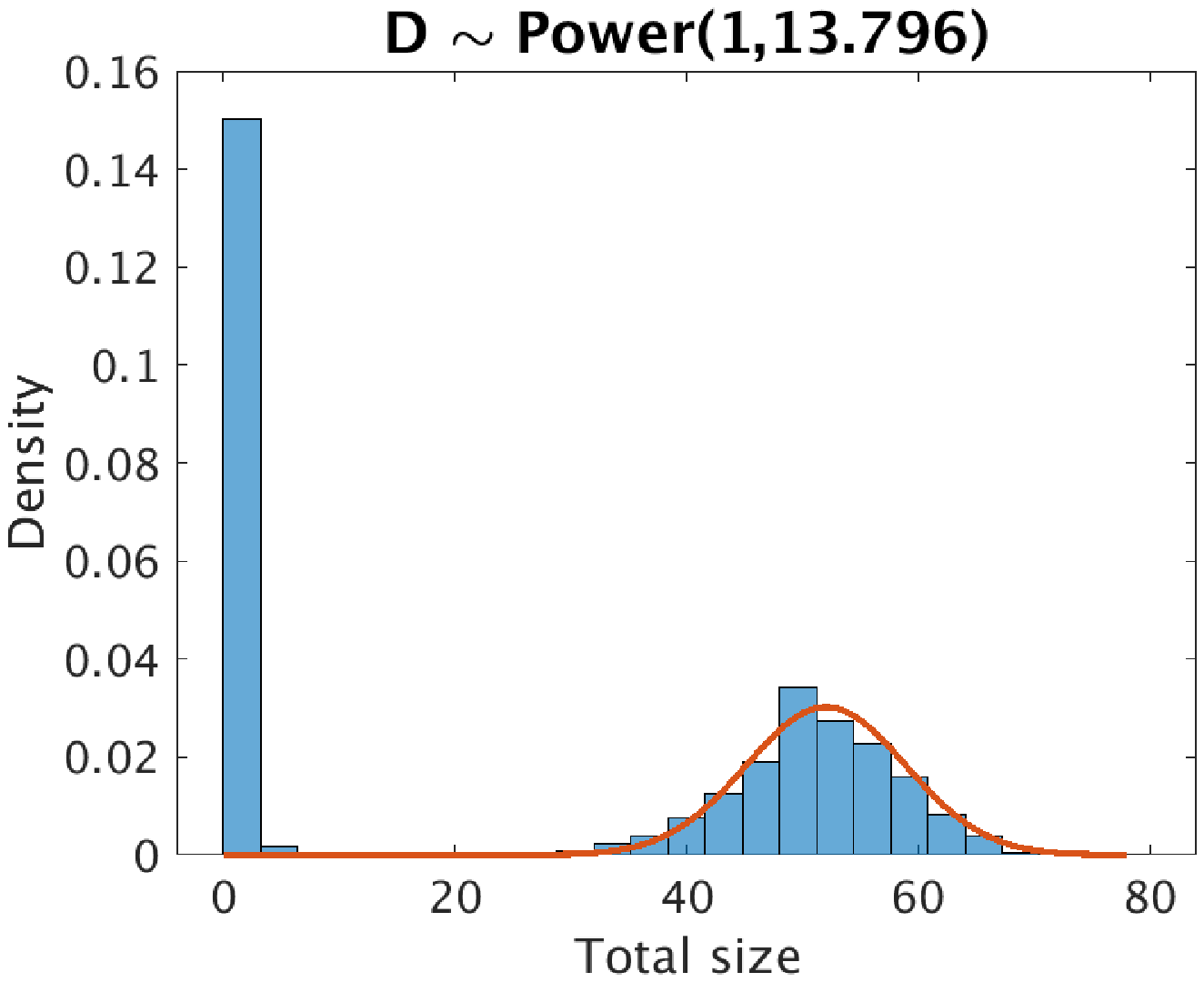}}
\end{center}
\caption{Histograms of 100,000 simulations of final size for epidemics on NSW networks with $I \equiv 1, p_I=1, n=100$ and one initial
infective, with a normal approximation superimposed; see text for details.}
\label{fig:majclt}
\end{figure}

The upper panels of Figure~\ref{fig:limitingvariance} shows the dependence of $\rho$ (left panel) and 
$\sigmatMR$ and $\sigmatNSW$ (right panel) on $p_I$.  The latter two are for the model with $I$ constant.  (Recall that, given $p_I$, 
the scaled asymptotic mean $\rho$ is independent of the distribution of $I$.)  The asymptotic scaled variances all decrease with $p_I$ and converge to
the asymptotic scaled variance of the giant component on the relevant graph as $p_I \uparrow 1$ (cf.~Remark~\ref{rmk:giantclt}).  The asymptotic scaled variances
tend to $\infty$ as $p_I \downarrow p_C$, where $p_C=\mudt^{-1}$ is the critical value of $p_I$ so that $R_0=1$. 
Note that $\sigmatNSW \ge \sigmatMR$ for all choices of $D$, cf.~Remark~\ref{rmk:varcomp}.  The lower panels of
Figure~\ref{fig:limitingvariance} show plots of $(\sigmatNSW - \sigmatMR)/\sigmatMR$ against $p_I$.  Note the
plots for the Poisson and geometric degree distributions are both increasing with $p_I$, while that for the 
${\rm Power}(1,13.796)$ distribution is unimodal.

\begin{figure}
\begin{center}
\resizebox{\hfigwidth}{!}{\includegraphics{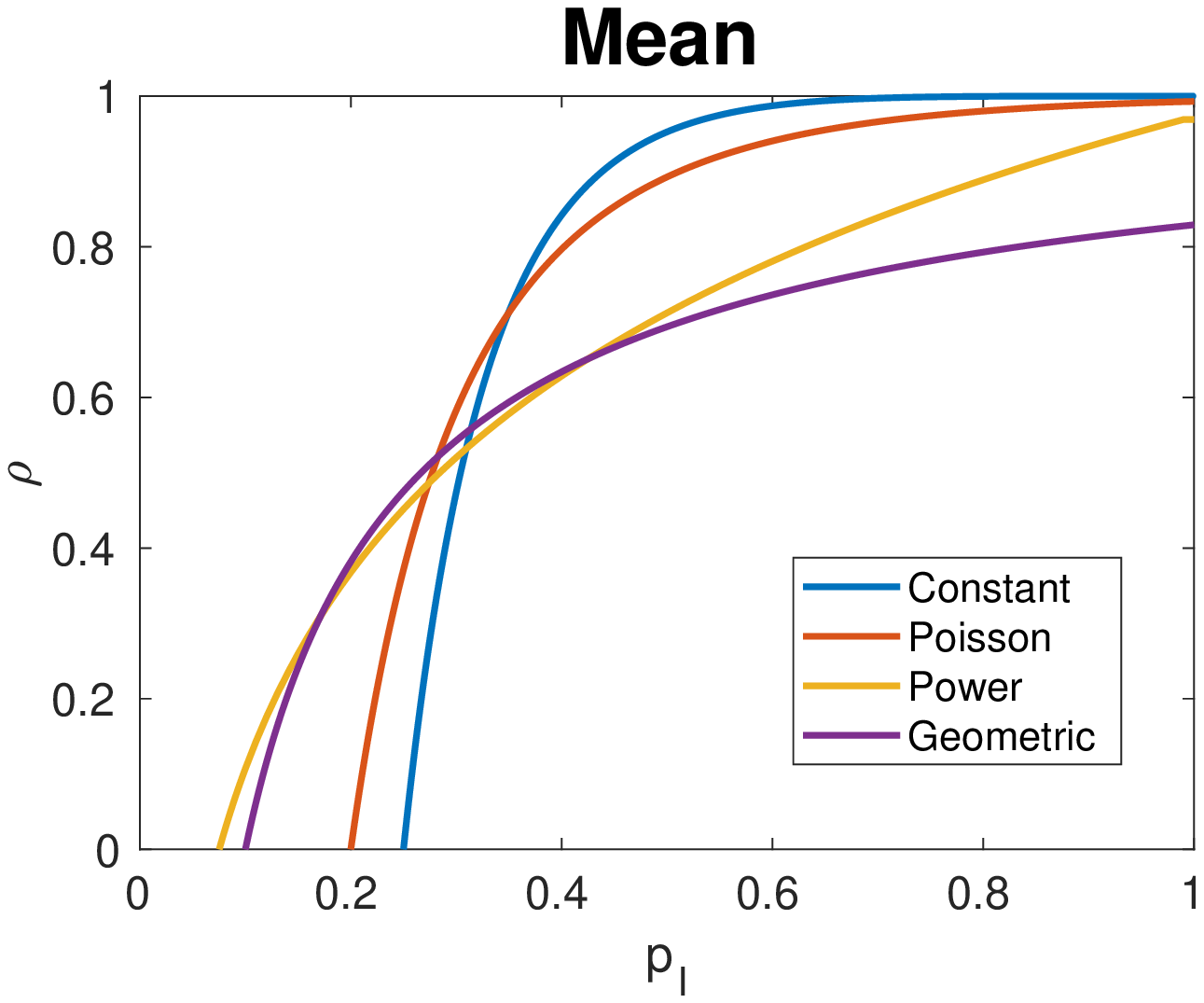}}
\resizebox{\hfigwidth}{!}{\includegraphics{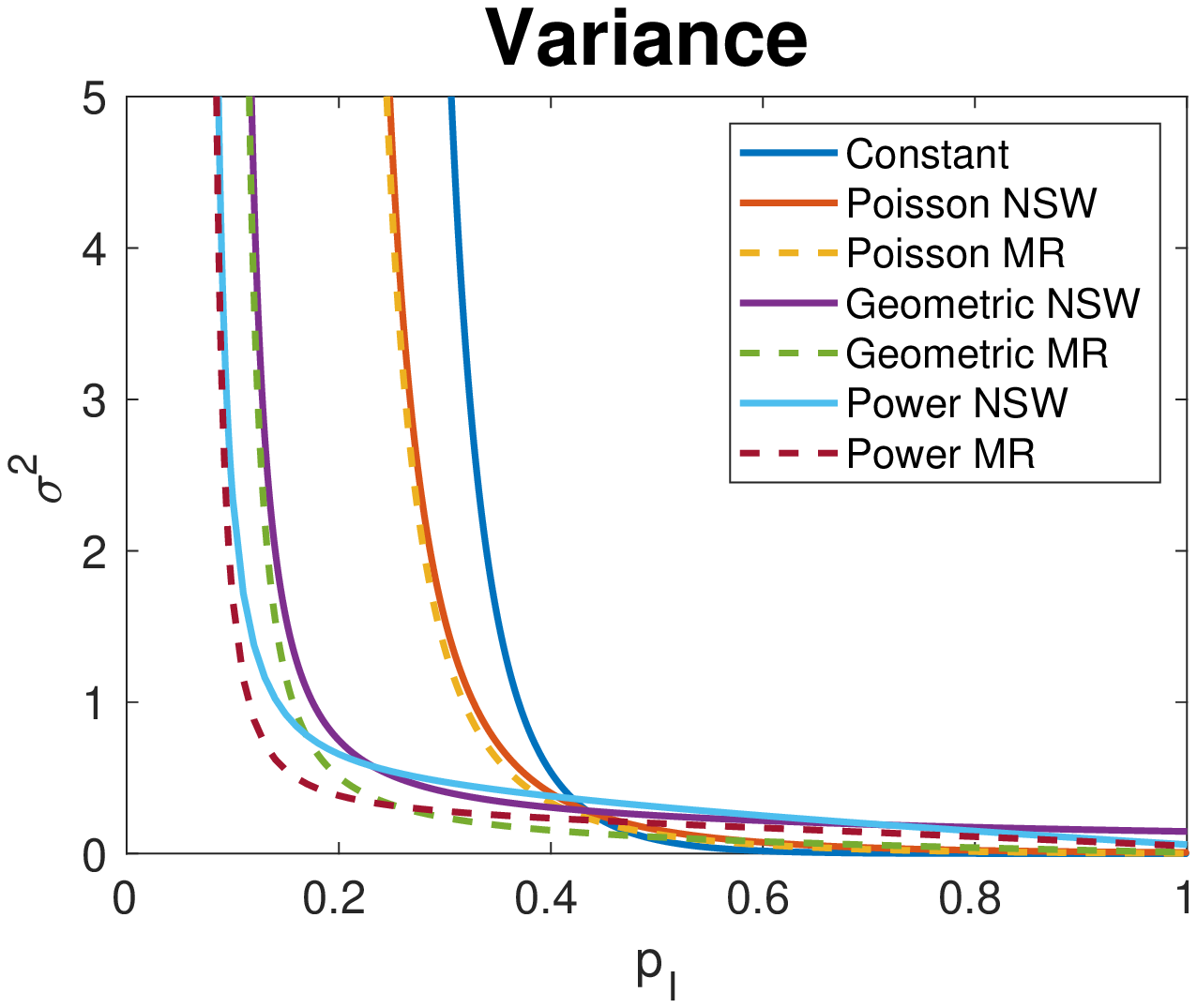}}\\
\resizebox{\hfigwidth}{!}{\includegraphics{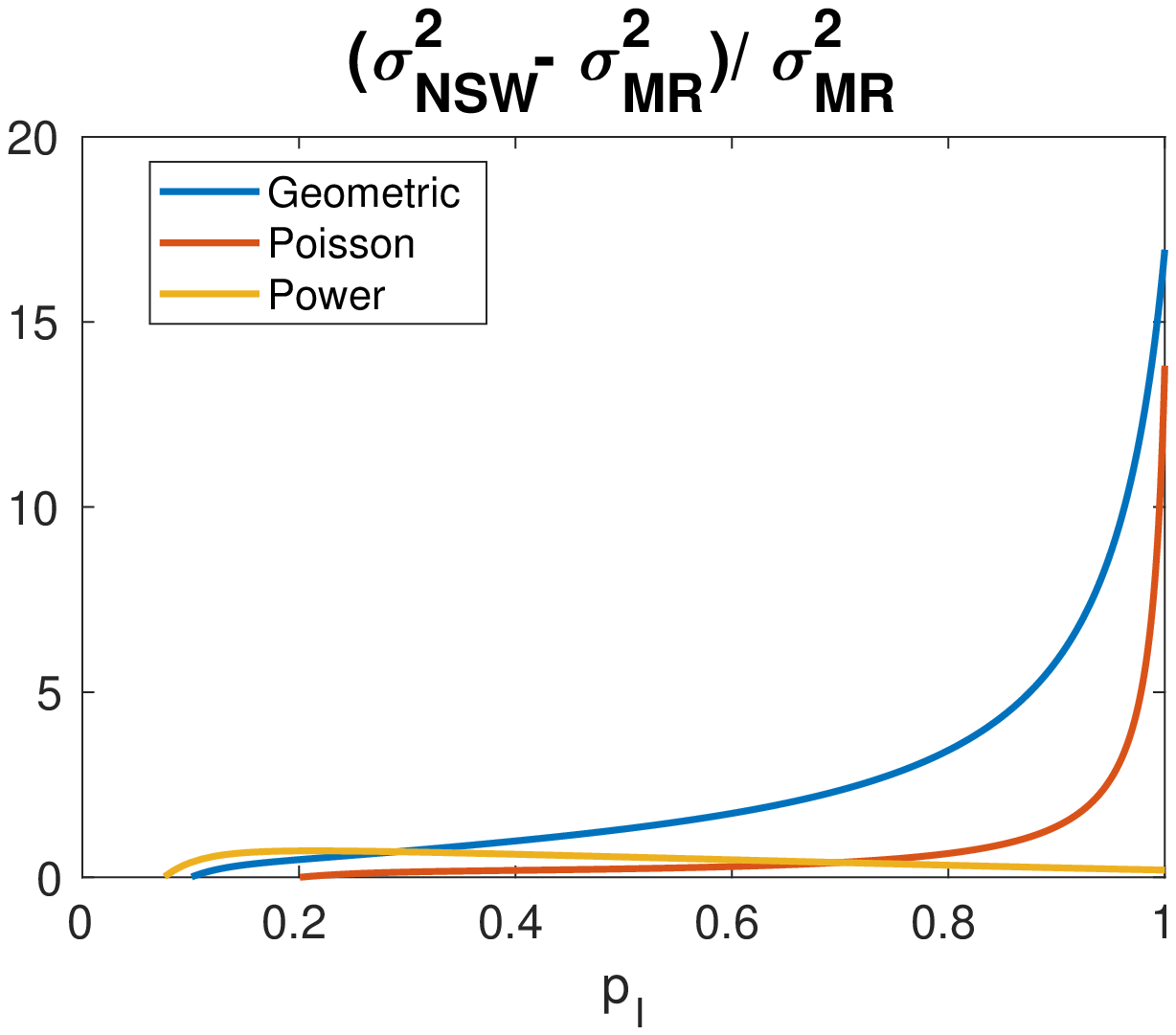}}
\resizebox{\hfigwidth}{!}{\includegraphics{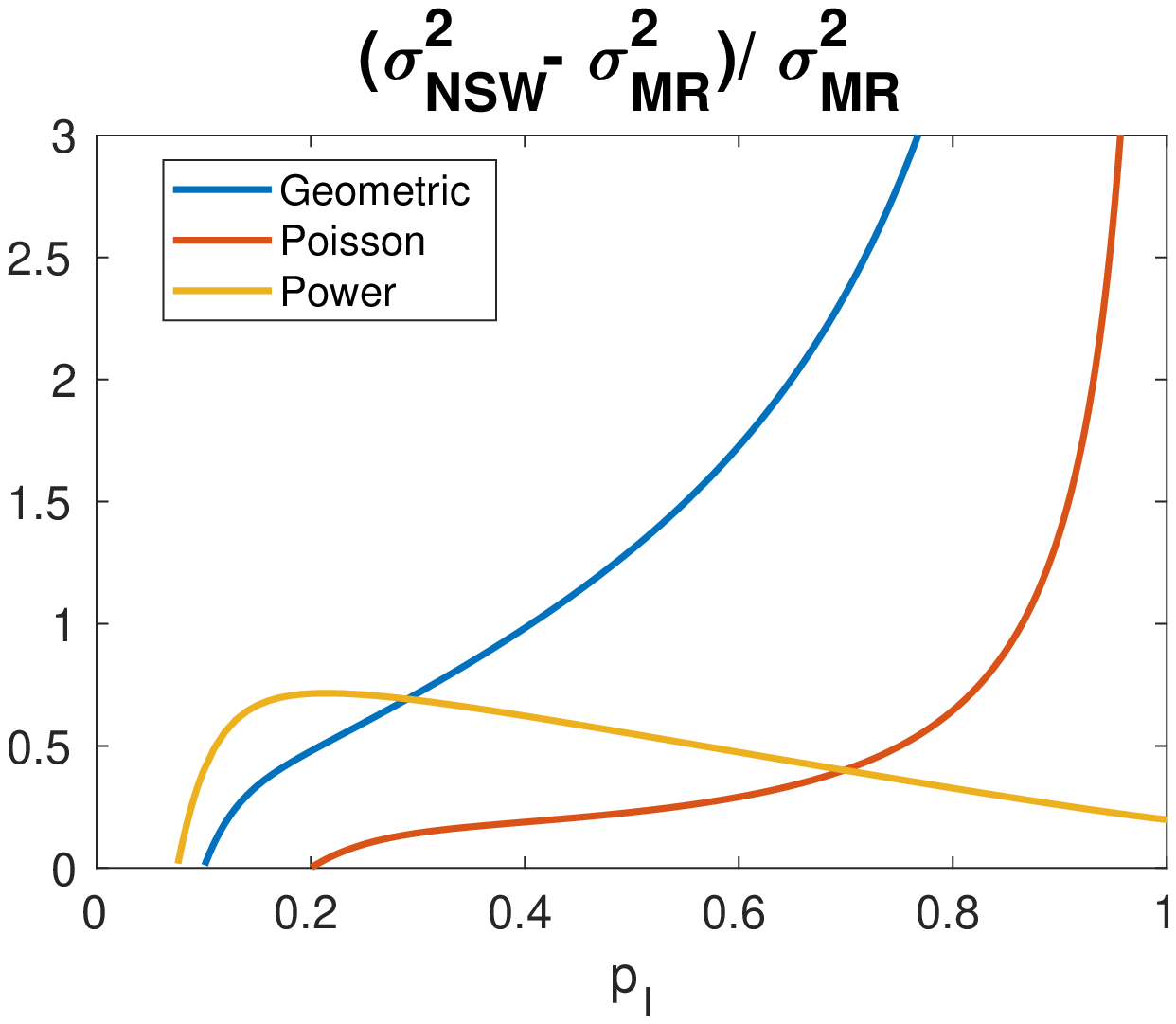}}
\end{center}
\caption{Dependence of asymptotic means and variances on $p_I$; see text for details.}
\label{fig:limitingvariance}
\end{figure}

The final illustrations are concerned with percolation.  Figure~\ref{fig:perc1} shows plots of estimates $\hat{\sigma}_n^2$ of the scaled variance $n^{-1}\var(\Cn)$  of the size of the largest component, based on
$n_{\rm sim}=10,000$ simulations for each choice of parameters, together with $95$\% equal-tailed confidence intervals given by
$\left[(n_{\rm sim}-1)/q_2,(n_{\rm sim}-1)/q_1\right]$, where $q_1$ and $q_2$ are respectively the 2.5\% and 97.5\% quantiles of
the $\chi^2_{n_{{\rm sim}-1}}$ distribution.  In all cases $\pi=0.3$.  The filled and unfilled markers correspond to percolation on
NSW and MR networks, respectively.  The $n \to \infty$ scaled variances, given by Theorem~\ref{thm:percclt},
are shown by horizontal dashed and solid lines for NSW and MR networks, respectively.  The figure suggests that the asymptotic approximations are again 
generally good, even for moderately-sized networks.  For fixed $n$, the approximation is better for  ${\rm Power}(1,13.796)$ distribution than for the ${\rm Po}(5)$ distribution.  The plot for site percolation when $D \sim {\rm Po}(5)$ is a bit odd, as
$\hat{\sigma}_n^2$ is not monotonic in $n$.  This is explored further in Figure~\ref{fig:perc2}, which is for percolation on
NSW random graphs with $D \sim{ \rm Po}(5)$.  Note that the distribution of $\Cn$ is clearly bimodal for site percolation with
$n=200$ and $\pi =0.3$. The lower plots in Figure~\ref{fig:perc1} demonstrate that increasing $n$ or $\pi$ alleviates the issue of small largest components.

\begin{figure}
\begin{center}
\resizebox{\hfigwidth}{!}{\includegraphics{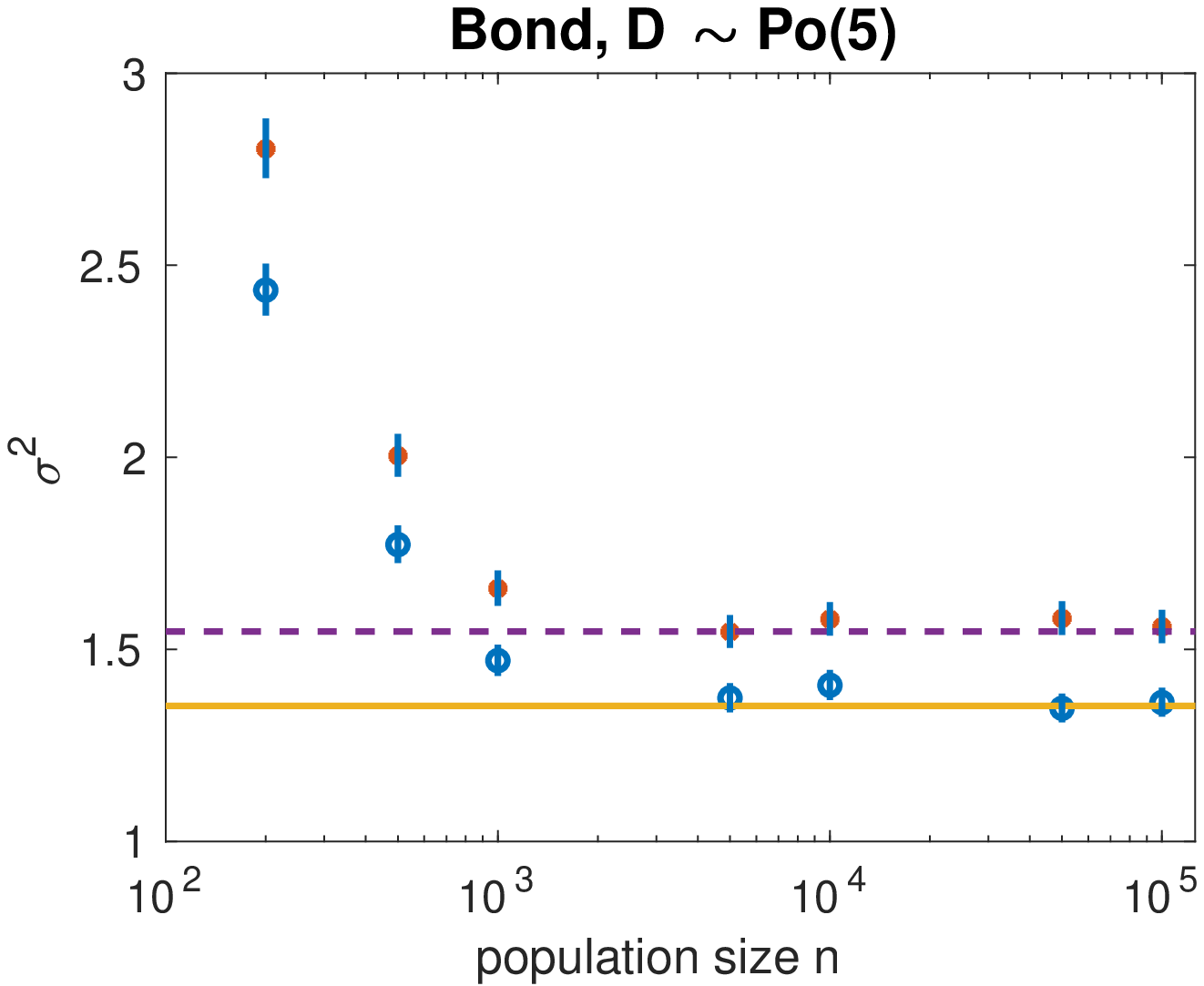}}
\resizebox{\hfigwidth}{!}{\includegraphics{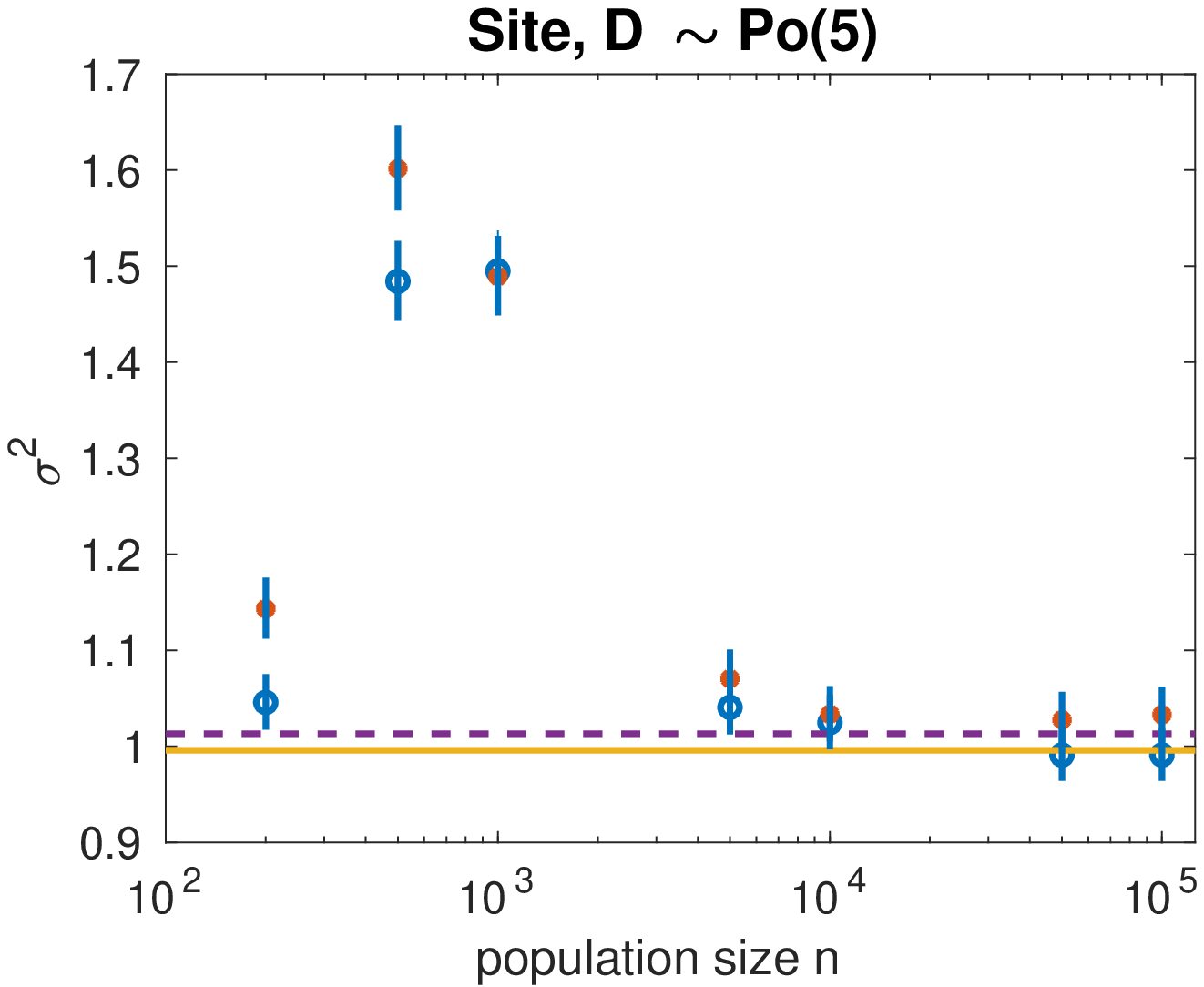}}\\
\resizebox{\hfigwidth}{!}{\includegraphics{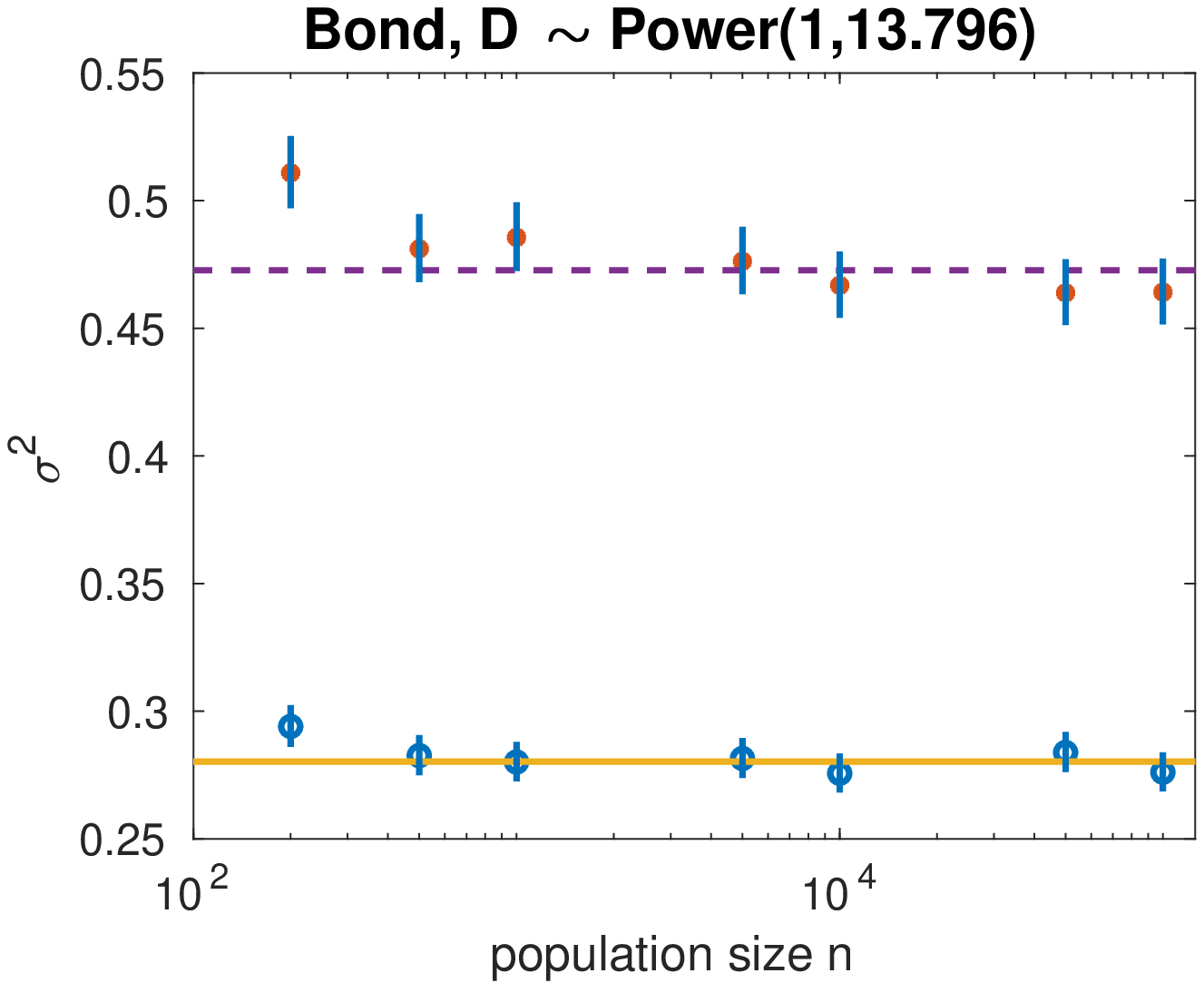}}
\resizebox{\hfigwidth}{!}{\includegraphics{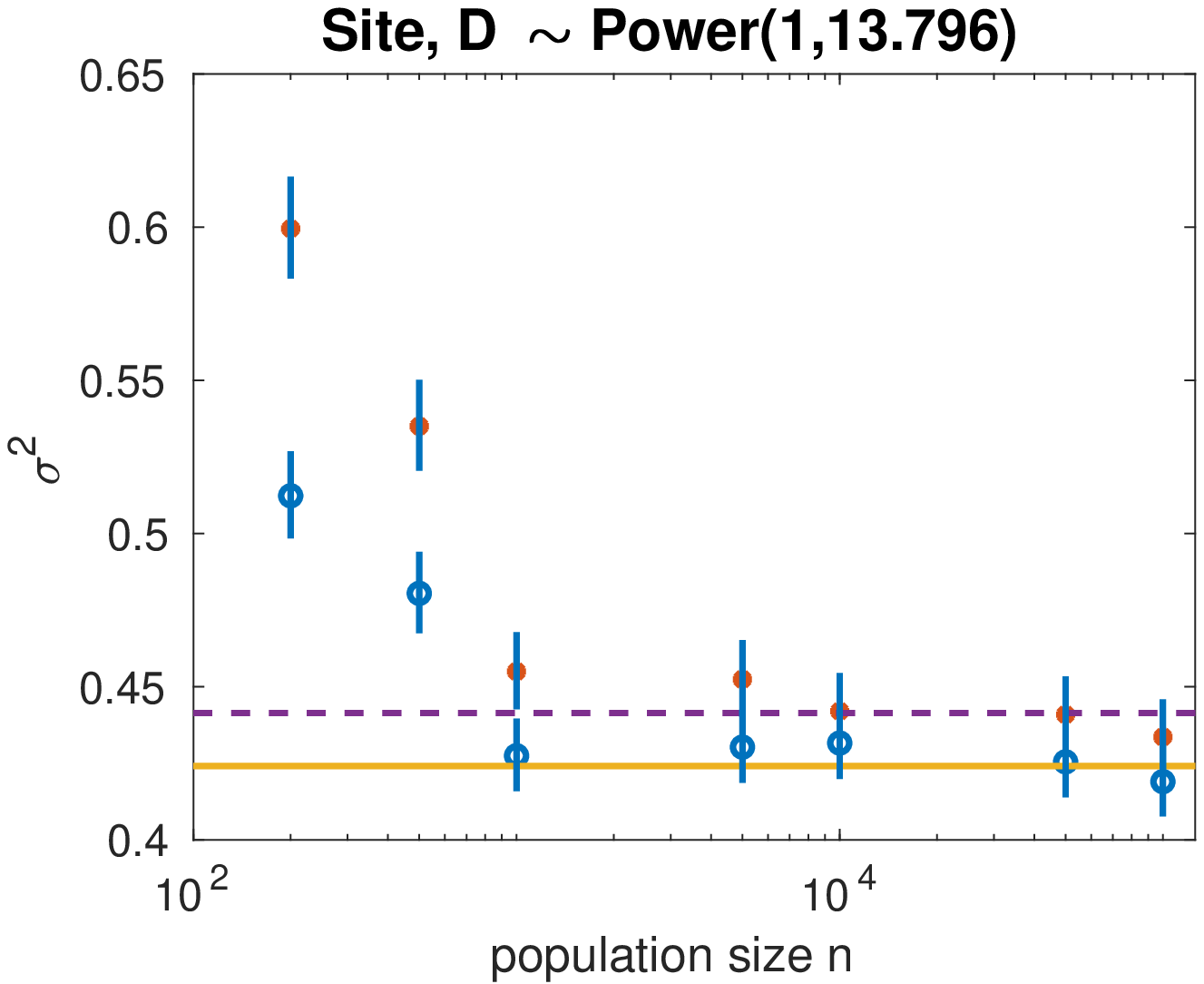}}
\end{center}
\caption{Plots of scaled variances $n^{-1}\var(\Cn)$ of the size of the largest component in bond and site percolation on
configuration model random graphs; see text for details.}
\label{fig:perc1}
\end{figure}

\begin{figure}
\begin{center}
\resizebox{\hfigwidth}{!}{\includegraphics{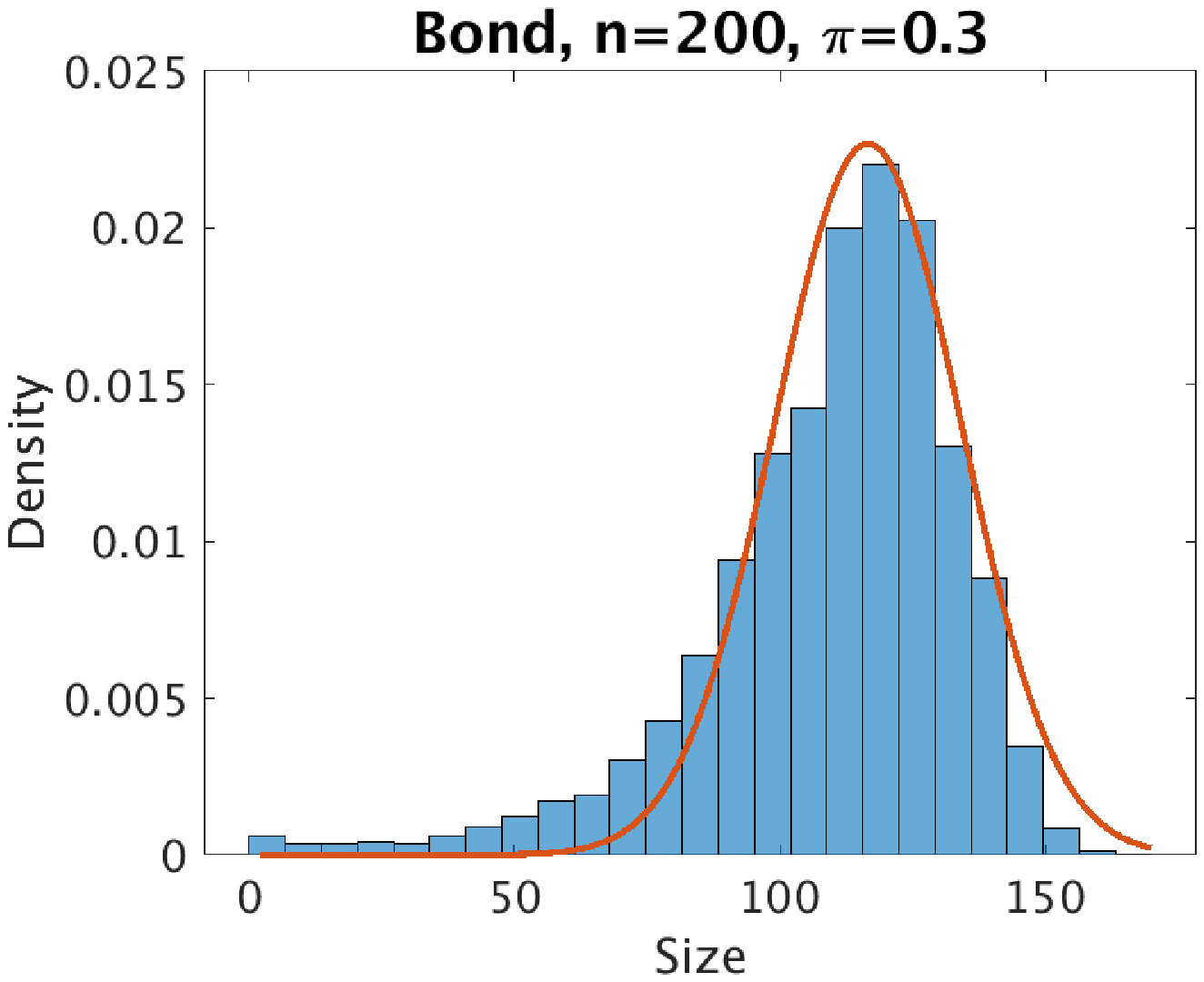}}
\resizebox{\hfigwidth}{!}{\includegraphics{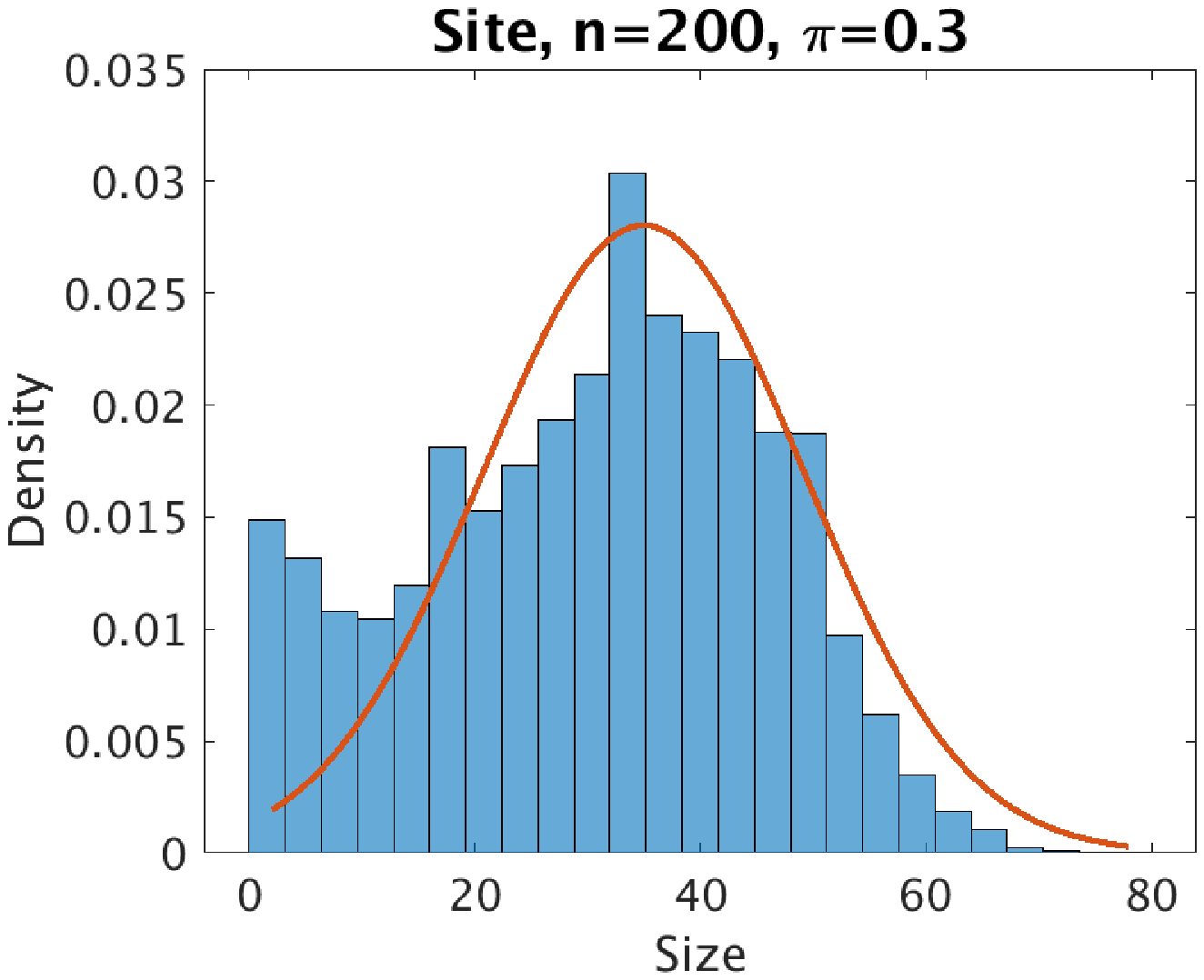}}\\
\resizebox{\hfigwidth}{!}{\includegraphics{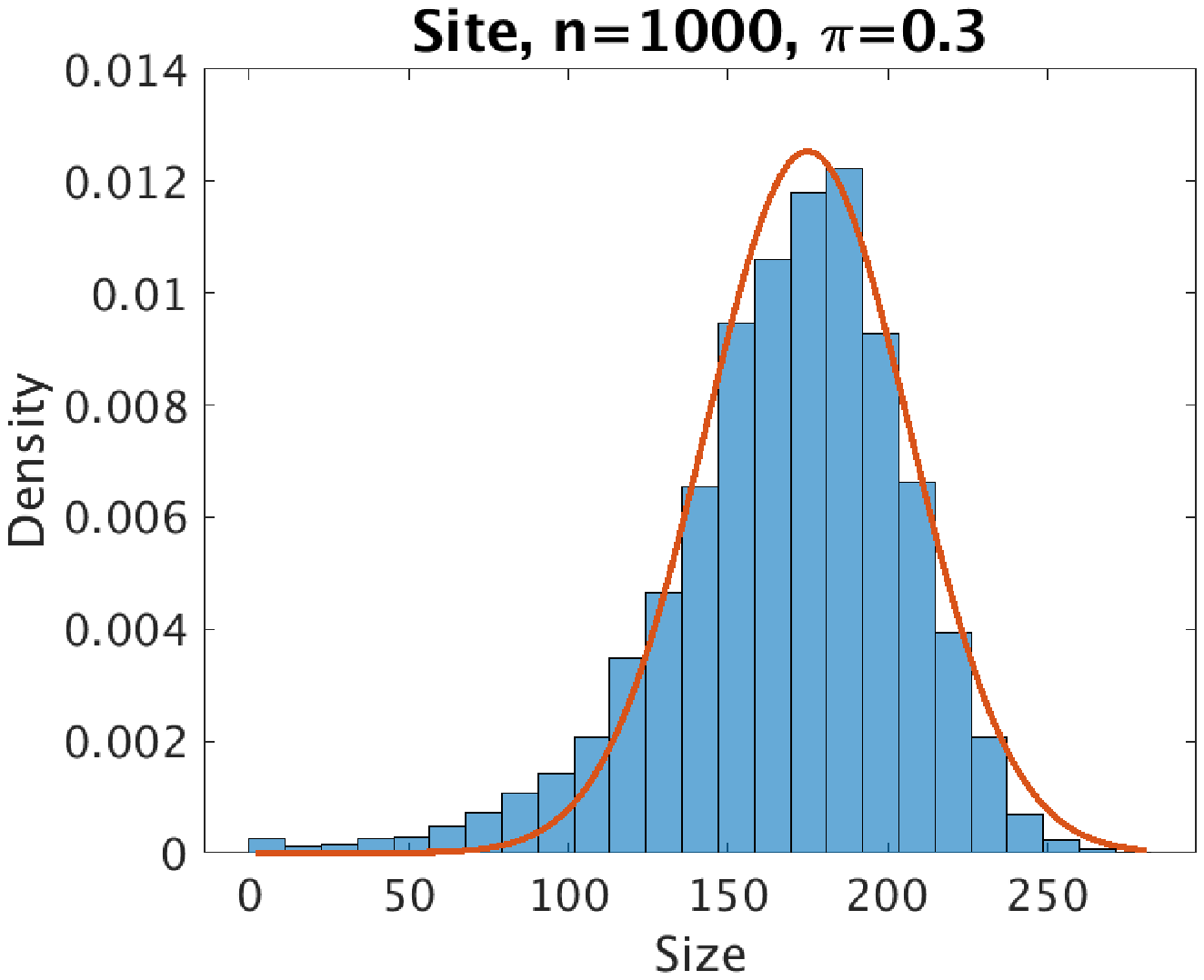}}
\resizebox{\hfigwidth}{!}{\includegraphics{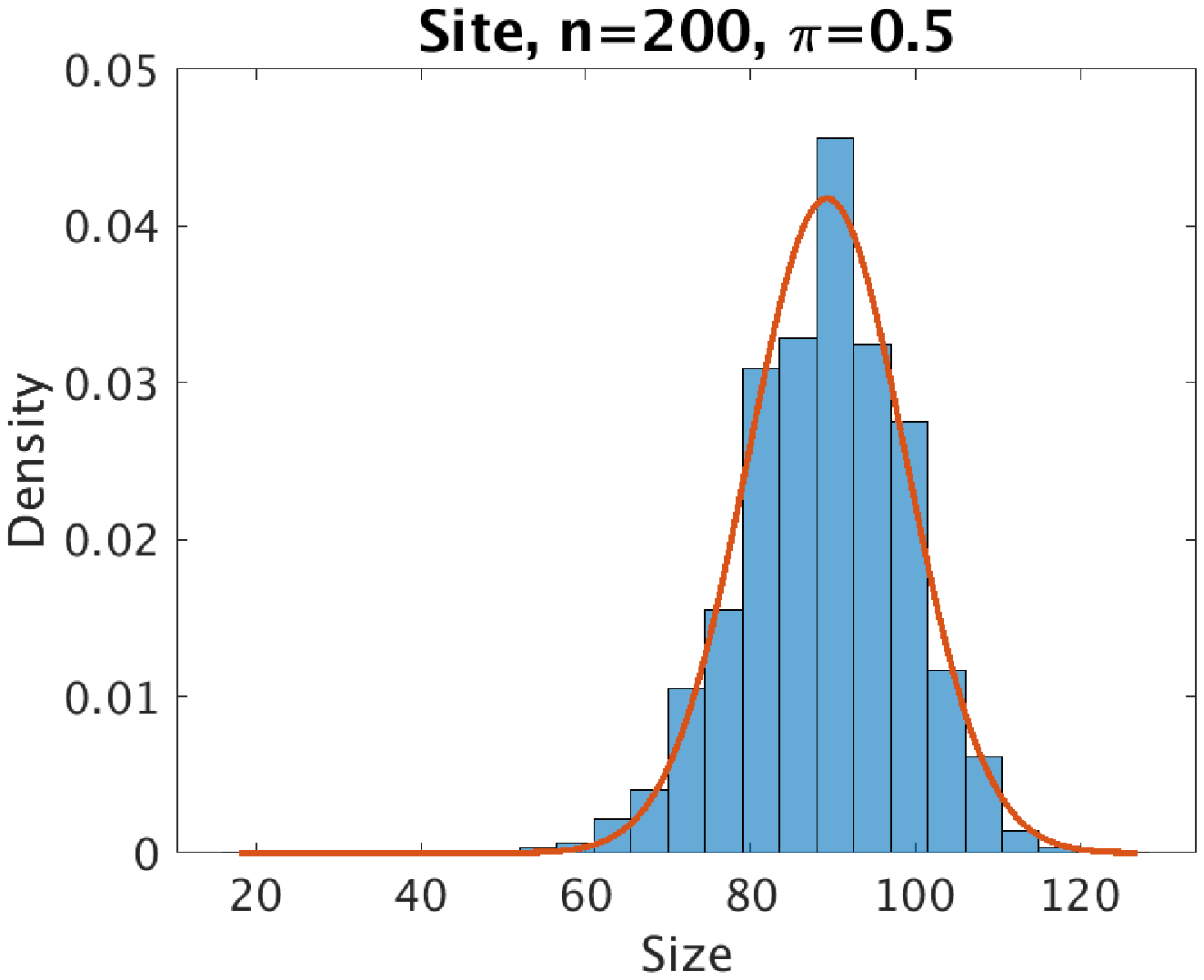}}
\end{center}
\caption{Histograms 100,000 simulations of size of largest component in percolation on
NSW random graphs having $D \sim{ \rm Po}(5)$, with asymptotic normal approximation superimposed.}
\label{fig:perc2}
\end{figure}

Finally, Figure~\ref{fig:perc3} shows histograms of the size of the largest component in 100,000 simulated bond and site
percolations on an MR random graph with $n=200, \pi=0.3$ and $D \sim {\rm Power}(1,13.796)$.  Two asymptotic normal approximations
are superimposed.  The solid lines are the densities of ${\rm N}(n\rho,n\sigmaMRbond)$ (bond percolation) and  ${\rm N}(n\pi\rho,n\sigmaMRsite)$ (site percolation), with $\rho, \sigmaMRbond$ and $\sigmaMRsite$ obtained by setting $D \sim 
{\rm Power}(1,13.796)$ in Theorem~\ref{thm:percclt}.  An improved approximation (dashed lines) is obtained by instead setting
$D$ to be the empirical ditribution of $\Dn_1, \Dn_2,\dots, \Dn_n$.  The difference between the approximations is more
noticeable for bond percolation. (The support of the histogram has been truncated slightly to make the difference clearer.)
The difference decreases with $n$ and is appreciably greater with heavy-tailed degree distributions.  Observe from Figures~\ref{fig:perc2} and~\ref{fig:perc3} that the asymptotic normal approximation underestimates the left tail and
overestimated the right tail of the distribution of $\Cn$.  This phenomenon is present also in the asymptotic normal approximation
of the epidemic final size $\Tn$.

\begin{figure}
\begin{center}
\resizebox{0.49\textwidth}{!}{\includegraphics{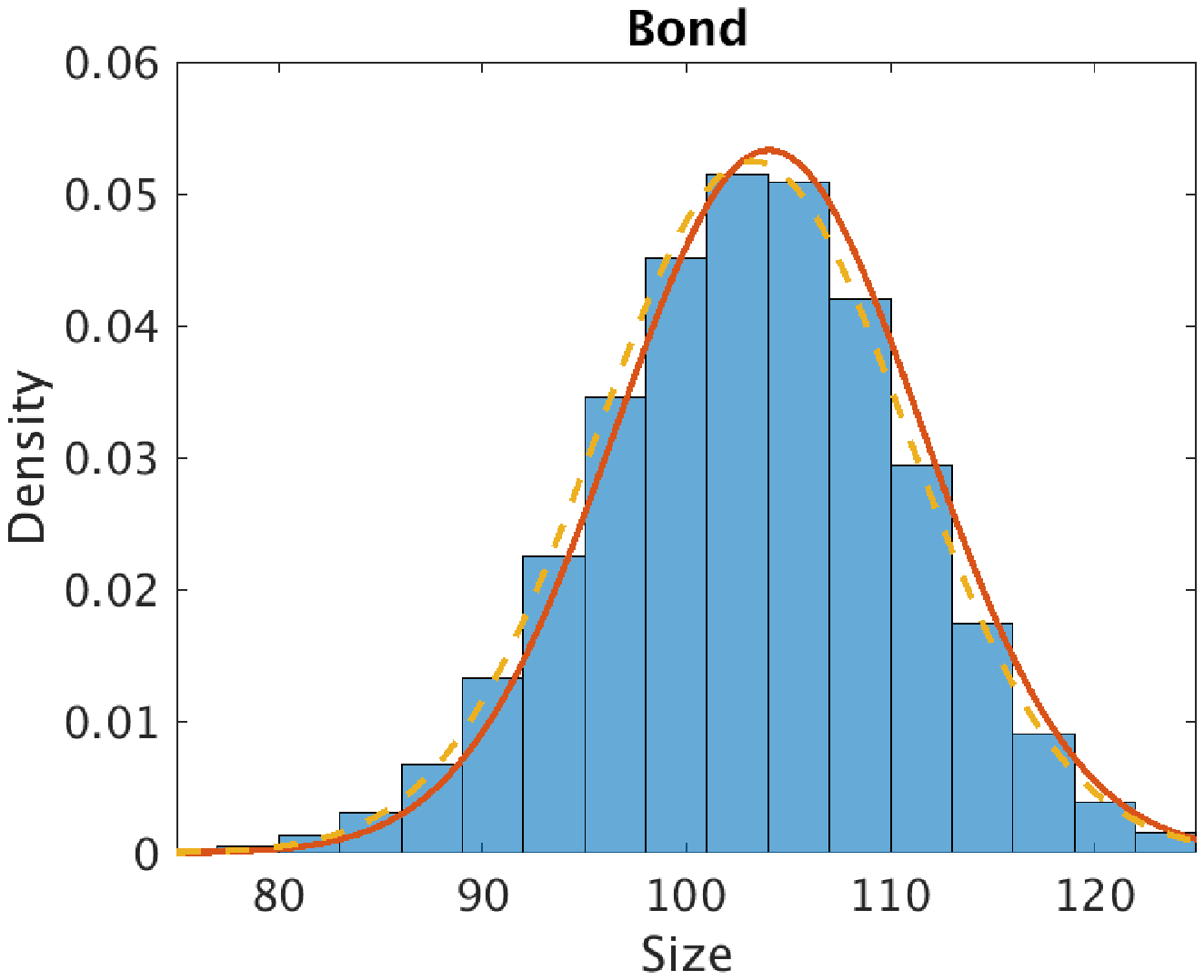}}
\resizebox{0.49\textwidth}{!}{\includegraphics{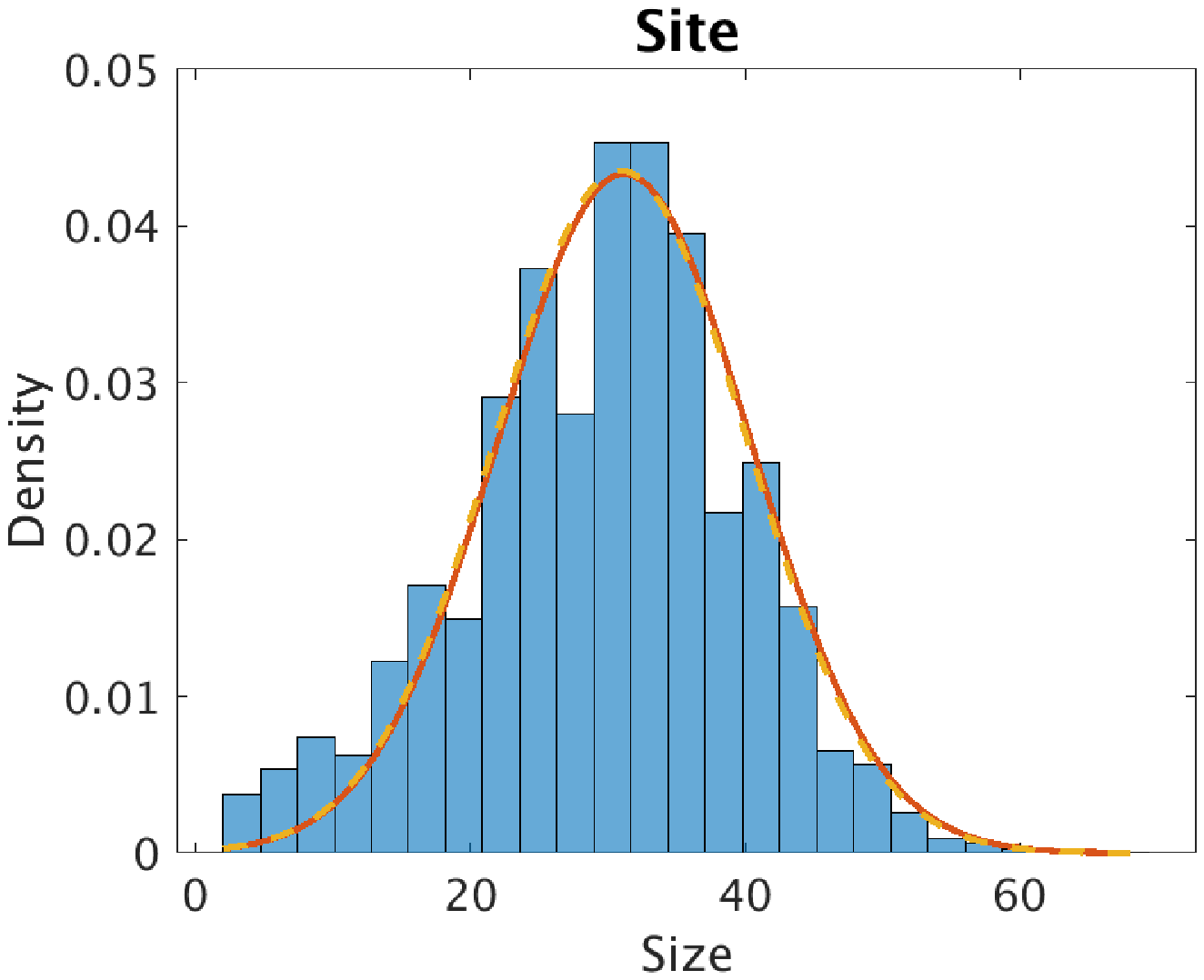}}
\end{center}
\caption{Histograms 100,000 simulations of size of largest component in percolation on
MR random graphs having $n=200, \pi=0.3$ and $D \sim {\rm Power}(1,13.796)$, with two asymptotic normal approximation superimposed; see text for details.}
\label{fig:perc3}
\end{figure}


\section{Density dependent population processes}
\label{sec:DPPP}
This section collects together some results for density dependent population processes
that are required in the paper.  It is based on~\cite{EK86}, Chapter 11, and~\cite{Pollett90}, though the statement of the functional central limit theorem is slightly more general than that in those references.  The notation is local to this section. 

For $n=1,2,\dots$, let $\{\bXn(t):t \ge 0\}=\{(\Xn_1(t),\Xn_2(t),\dots,\Xn_p(t)): t \ge 0\}$ be a continuous-time Markov
chain with state space $\Sn \subseteq \mathbb{Z}^p$ and transition intensities of the form 
\begin{equation}
\label{equ:ADDPP}
q^{(n)}(\bi,\bi+\bl)=n\beta^{(n)}_{\bl}(n^{-1}\bi)\qquad(\bi \in \Sn, \bl \in \Delta),
\end{equation}
where $\Delta$ is the set of possible jumps from a typical state $\bi=(i_1,i_2,\dots,i_p)$ and the $\beta^{(n)}_{\bl}:E \to \mathbb{R}$ are
nonnegative functions defined on an open set $E \subseteq \mathbb{R}^p$.  We assume that 
$\Delta$ is finite.  The theory in~\cite{EK86}, Chapter 11, and~\cite{Pollett90} allows 
$\Delta$ to be infinite but only the finite case is required in our application and the results are easier to state
in that setting.  Suppose that $\beta_{\bl}(\bx)=\lim_{n \to \infty}\beta^{(n)}_{\bl}(\bx)$ exists for all 
$\bl \in \Delta$ and all $\bx \in E$; the corresponding  family of processes is then called 
asymptotically density dependent by \cite{Pollett90}. 
In~\cite{EK86}, Chapter 11, a family of processes which satisfies~\eqref{equ:ADDPP}
with $\beta^{(n)}_{\bl}$ replaced by $\beta_{\bl}$ is called a density dependent family, and it is noted that
the results usually carry over with little additional effort to the more general form where
\[
q^{(n)}(\bi,\bi+\bl)=n\left[\beta_{\bl}(n^{-1}\bi)+O(n^{-1})\right] \qquad(\bi \in \Sn, \bl \in \Delta).
\] 

Let
\begin{equation}
\label{equ:driftF}
F(\bx)=\sum_{\bl \in \Delta}\bl \beta_{\bl}(\bx) \quad\mbox{and}\quad \Fn(\bx)=\sum_{\bl \in \Delta}\bl \beta_{\bl}^{(n)}(\bx)\qquad(\bx \in E).
\end{equation}
The following weak law of large numbers follows from~\cite{EK86}, Theorem 11.2.1, allowing for random initial conditions
and asymptotic density dependence; see also~\cite{Kurtz:1970}, Theorem 3.1 and~\cite{Pollett90}, Theorem 3.1.
\begin{theorem}
\label{thm:wlln}
Suppose that for each compact $K \in H$,
\begin{equation*}
\sum_{\bl \in \Delta} \sup_{\bx \in K} |\bl| \beta_{\bl}(\bx) <\infty,\qquad \lim_{n \to \infty}\sup_{\bx \in K}|\Fn(\bx)-F(\bx)|=0
\end{equation*}
and there exists $M_K>0$ such that 
\[
|F(\bx)-F(\by)|< M_K|\bx-\by| \qquad\mbox{for all } \bx,\by \in E.
\]
Suppose also that $n^{-1}\bXn(0)\convp \bx_0$ as $n \to \infty$.  Let $\bx(t)$ $(t \ge 0)$ be given by
\[
\bx(t)=\bx_0+\int_0^t F(\bx(u))\,{\rm d}u.
\]
Then, for all $t \ge 0$,
\[
\sup_{0 \le u \le t}|n^{-1}\bXn(u)-\bx(u)| \convp 0 \qquad \mbox{as }n \to \infty.
\]
\end{theorem}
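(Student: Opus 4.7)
The plan is to follow the martingale approach used in the proof of~\cite{EK86}, Theorem~11.2.1, with two modifications: one to accommodate an asymptotically (rather than exactly) density dependent family, and one to handle convergence in probability of the rescaled initial condition instead of a deterministic limit.

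First I would represent $\bXn$ via the Poisson random-time-change representation
\[
\bXn(t)=\bXn(0)+\sum_{\bl\in\Delta}\bl\,Y_{\bl}\!\left(n\int_0^t\beta_{\bl}^{(n)}(n^{-1}\bXn(u))\,{\rm d}u\right),
\]
where $\{Y_{\bl}:\bl\in\Delta\}$ are independent rate-$1$ Poisson processes. Subtracting the integral form of the ODE for $\bx$ and dividing by $n$ gives the decomposition
\[
n^{-1}\bXn(t)-\bx(t)=\bigl[n^{-1}\bXn(0)-\bx_0\bigr]+M^{(n)}(t)+\int_0^t\bigl[\Fn(n^{-1}\bXn(u))-F(\bx(u))\bigr]\,{\rm d}u,
\]
where $M^{(n)}$ is the martingale obtained by centring each Poisson process. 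I would then split the integrand as $\Fn(\by)-F(\by)+F(\by)-F(\bx)$, using the uniform convergence hypothesis on the first piece and the local Lipschitz hypothesis on the second.

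Next I would fix $t>0$, pick a compact $K\subseteq E$ whose interior contains $\{\bx(u):0\le u\le t\}$, and introduce the stopping time $\taun=\inf\{u\ge 0:n^{-1}\bXn(u)\notin K\}$. For $u\le t\wedge\taun$ the martingale term is controlled by Doob's inequality together with the assumption $\sum_{\bl}\sup_{\bx\in K}|\bl|\beta_{\bl}(\bx)<\infty$, giving $\sup_{0\le u\le t\wedge\taun}|M^{(n)}(u)|\convp 0$. Gronwall's inequality then yields
\[
\sup_{0\le u\le t\wedge\taun}|n^{-1}\bXn(u)-\bx(u)|\le \re^{M_K t}\bigl(|n^{-1}\bXn(0)-\bx_0|+\sup_{0\le u\le t}|M^{(n)}(u)|+t\sup_{\bx\in K}|\Fn(\bx)-F(\bx)|\bigr),
\]
whose right-hand side tends to $0$ in probability, by the three hypotheses combined with $n^{-1}\bXn(0)\convp\bx_0$. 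A bootstrap step, choosing $K$ so that its boundary has positive distance $\delta$ from $\{\bx(u):0\le u\le t\}$, then forces $\P(\taun\le t)\to 0$, removing the stopping time from the supremum.

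The main obstacle, and the only genuinely new point relative to~\cite{EK86}, is the interplay between the random initial condition and the localisation: there the deterministic starting point makes it trivial to begin inside $K$, whereas here I must argue that for any $\epsilon>0$ the process starts in $K$ with probability tending to $1$, which follows from $n^{-1}\bXn(0)\convp\bx_0$ once $K$ is chosen to contain a neighbourhood of $\bx_0$. The asymptotic density dependence itself is comparatively mild, contributing only the extra additive term $t\sup_{\bx\in K}|\Fn(\bx)-F(\bx)|$ in the Gronwall bound, which vanishes by the second hypothesis.
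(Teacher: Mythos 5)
Your proposal is correct and follows essentially the same route as the paper, which does not prove this result directly but derives it from Theorem 11.2.1 of~\cite{EK86} (see also~\cite{Kurtz:1970} and~\cite{Pollett90}) via exactly the two extensions you identify: the extra additive term $t\sup_{\bx\in K}|\Fn(\bx)-F(\bx)|$ in the Gronwall bound for asymptotic density dependence, and choosing $K$ to contain a neighbourhood of $\bx_0$ so that $n^{-1}\bXn(0)\convp\bx_0$ puts the process in $K$ with probability tending to one. The only minor remark is that your use of Doob's inequality for the martingale term implicitly needs second moments of the jumps, whereas the stated hypothesis controls only $\sum_{\bl}|\bl|\sup_K\beta_{\bl}$ (the reason~\cite{EK86} instead invoke the functional strong law for unit Poisson processes); this is immaterial here because the paper assumes $\Delta$ is finite.
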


Write $F(\bx)=\left(F_1(\bx),F_2(\bx),\dots, F_p(\bx)\right)$ and $\bx=(x_1,x_2,\dots,x_p)$.  
Let $\partial F(\bx)$ and $G(\bx)$ be the $p \times p$ matrix functions defined by
\[
\partial F(\bx)=\left[\dfrac{\partial F_i}{\partial x_j}(\bx)\right]
\qquad \mbox{and} \qquad
G(\bx)=\sum_{\bl \in \Delta} \bl^{\top}\bl \beta_{\bl}(\bx).
\]
Further, let $\Phi(t,u)=[\phi_{ij}(t,u)]$ $(0 \le u \le t<\infty)$ be the solution of the matrix differential
equation
\begin{equation*}
\dfrac{\partial}{\partial t}\Phi(t,u)=\partial F(\bx(t))\Phi(t,u),
\quad \Phi(u,u)=I.
\end{equation*}
Note that
\begin{equation*}
\phi_{ij}(t,u)=\dfrac{\partial x_i(t-u)}{\partial x_j(0)} \qquad(i,j=1,2,\dots,p).
\end{equation*}

The following functional central limit theorem follows from~\cite{EK86}, Theorem 11.2.3,
(see also~\cite{Kurtz:1970}, Theorem~ 3.5, and~\cite{Pollett90}, Theorem 3.2) allowing for asymptotically random initial conditions.  
Let $\Rightarrow$ denote weak convergence in the space of right-continuous functions
$f:[0,\infty) \to \mathbb{R}^p$ having limits from the left (i.e.~c\`{a}dl\`{a}g functions), 
endowed with the Skorohod metric.
\begin{theorem}
\label{thm:fclt}
Suppose that for each compact $K \in H$,
\begin{equation*}
\sum_{\bl \in \Delta}|\bl|^2 \sup_{\bx \in K} \beta_{\bl}(\bx) <\infty 
\qquad \mbox{and} \qquad
 \lim_{n \to \infty}\sqrt{n}\sup_{\bx \in K}|\Fn(\bx)-F(\bx)|=0,
\end{equation*}
and that  $\beta_{\bl}$ $(\bl \in \Delta)$ and $\partial F$ are continuous.  Suppose also that
\[
\sqrt{n}\left(n^{-1}\bXn(0)-\bx(0)\right)\convD \bV(0)\qquad \mbox{as } n \to \infty,
\]
where
$\bV(0) \sim N(\bzero,\Sigma_0)$.  Then, as $n \to \infty$, 
\begin{equation}
\label{equ:FCLT}
\left\{\sqrt{n}\left(n^{-1}\bXn(t)-\bx(t)\right): t \ge 0 \right\} \Rightarrow \{\bV(t):t \ge 0\},
\end{equation}
where $\{\bV(t):t \ge 0\}$ is a zero-mean
Gaussian process with covariance function given, for $t_1,t_2 \ge 0$, by
\begin{align}
\label{equ:covinit}
{\rm cov}\left(\bV(t_1), \bV(t_2)\right)=&\Phi(t_1,0)\Sigma_0 \Phi(t_2,0)^{\top}\\ 
&\qquad+ \int_0^{\min(t_1,t_2)}\Phi(t_1,u)
G(\bx(u))\Phi(t_2,u) ^{\top} \,{\rm d}u.\nonumber
\end{align}
\end{theorem}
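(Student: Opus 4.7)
The plan is to adapt the standard proof of the FCLT for density dependent Markov chains (as in~\cite{EK86}, Theorem 11.2.3, and~\cite{Pollett90}, Theorem 3.2) to accommodate both asymptotic density dependence and asymptotically random initial conditions. First I would invoke the Poisson representation
\begin{equation*}
\bXn(t)=\bXn(0)+\sum_{\bl \in \Delta}\bl\, Y_{\bl}\!\left(n \int_0^t \beta^{(n)}_{\bl}(n^{-1}\bXn(u))\,{\rm d}u\right),
\end{equation*}
where $\{Y_{\bl}\}_{\bl \in \Delta}$ are independent unit-rate Poisson processes, independent of $\bXn(0)$. Setting $\bVn(t)=\sqrt{n}(n^{-1}\bXn(t)-\bx(t))$, this representation decomposes $\bVn$ into an initial fluctuation, a martingale part $\bMn(t)$ built from the compensated Poisson processes (scaled by $n^{-1/2}$), and a drift integral of $\sqrt{n}[\Fn(n^{-1}\bXn(u))-F(\bx(u))]$.

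The second key step is to split the drift error as $\sqrt{n}[\Fn-F](n^{-1}\bXn(u))+\sqrt{n}[F(n^{-1}\bXn(u))-F(\bx(u))]$. The first summand vanishes uniformly on compacts by the hypothesis $\lim_{n \to \infty}\sqrt{n}\sup_K|\Fn-F|=0$, combined with Theorem~\ref{thm:wlln} which traps $n^{-1}\bXn(u)$ in a compact neighbourhood of $\bx(u)$ on $[0,t]$ with probability tending to $1$. For the second summand, the continuity of $\partial F$ permits a Taylor expansion yielding $\partial F(\bx(u))\bVn(u)$ up to a remainder $o(|\bVn(u)|)$ which can be controlled by a Gronwall-type argument. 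Thus $\bVn$ satisfies an approximate linear integral equation
\begin{equation*}
\bVn(t)=\bVn(0)+\int_0^t \partial F(\bx(u))\bVn(u)\,{\rm d}u + \bMn(t)+o_p(1),
\end{equation*}
uniformly on compacts.

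Next I would apply the martingale FCLT (e.g.~\cite{EK86}, Theorem 7.1.4) to the scaled martingale $\bMn$: the second-moment hypothesis $\sum_{\bl \in \Delta}|\bl|^2\sup_K \beta_{\bl}(\bx)<\infty$ and the uniform convergence of $n^{-1}\bXn$ to $\bx$ imply that the quadratic variation of $\bMn$ converges to $\int_0^t G(\bx(u))\,{\rm d}u$, so $\bMn$ converges weakly to a continuous Gaussian martingale $\bM$ with this quadratic variation. The new input over the standard theorem is that $\bVn(0)$ is independent of $\{Y_{\bl}\}$ and converges in distribution to $\bV(0)\sim N(\bzero,\Sigma_0)$, so the joint convergence $(\bVn(0),\bMn)\Rightarrow(\bV(0),\bM)$ holds with $\bV(0)$ independent of $\bM$. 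A continuous mapping argument (the solution map of a linear integral equation with input $(\bV(0),\bM)$ is continuous in the Skorohod topology) then delivers~\eqref{equ:FCLT} with
\begin{equation*}
\bV(t)=\Phi(t,0)\bV(0)+\int_0^t \Phi(t,u)\,{\rm d}\bM(u).
\end{equation*}
Using independence of $\bV(0)$ and $\bM$ and It\^o isometry yields~\eqref{equ:covinit}, the first summand $\Phi(t_1,0)\Sigma_0\Phi(t_2,0)^{\top}$ being the contribution from the random initial condition that is absent in the deterministic-start version.

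The principal obstacle is the linearisation step: making the passage from $\sqrt{n}[F(n^{-1}\bXn(u))-F(\bx(u))]$ to $\partial F(\bx(u))\bVn(u)$ rigorous requires uniform bounds on $\bVn$ over $[0,t]$ before one has established its weak convergence. This is handled by first truncating on the event that $n^{-1}\bXn$ stays in a fixed compact neighbourhood of $\{\bx(u):0\le u\le t\}$ (which has probability tending to $1$ by Theorem~\ref{thm:wlln}), bounding the remainder in Taylor's theorem on that compact set, and applying Gronwall's inequality to $\mathbb{E}[\sup_{u\le s}|\bVn(u)|^2]$ to obtain the tightness required for the continuous mapping step. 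The remaining verifications, including identification of the Gaussian limit via convergence of finite-dimensional distributions, then follow as in the cited references.
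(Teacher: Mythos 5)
Your proposal is correct and takes essentially the same approach as the paper: the paper gives no proof of this theorem, merely asserting that it follows from Ethier and Kurtz (1986), Theorem 11.2.3 (see also Pollett (1990), Theorem 3.2) once asymptotically random initial conditions are allowed for, and your sketch is exactly the standard argument behind that citation --- Poisson representation, martingale FCLT, linearisation controlled by Gronwall's inequality, and the variation-of-constants representation $\bV(t)=\Phi(t,0)\bV(0)+\int_0^t\Phi(t,u)\,{\rm d}\boldsymbol{M}(u)$. The only genuinely new ingredient relative to the cited result, namely that the independent initial fluctuation contributes the additional term $\Phi(t_1,0)\Sigma_0\Phi(t_2,0)^{\top}$ to the covariance in~\eqref{equ:covinit}, is handled correctly in your argument.
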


We also require the following theorem concerned with the first exit of $\{\bXn(t):t \ge 0\}$ from a region of its state space. The theorem is a subset of~\cite{EK86}, Theorem 11.4.1,
expressed now for asymptotically density dependent processes having random initial conditions.  

\begin{theorem}
\label{thm:hitting}
Suppose that the conditions of Theorem~\ref{thm:fclt} are satisfied. Let
$\varphi:\mathbb{R}^p \to \mathbb{R}$ be continuously differentiable.  Suppose that
$\varphi(\bx(0))>0$.  Let $\taun=\inf\{t \ge 0: \varphi(n^{-1}\bXn(t)) \le 0\}$ and
$\tau=\inf\{t \ge 0: \varphi(\bx(t)) \le 0\}$.  Suppose that $\tau<\infty$ and $\nabla \varphi(\bx(\tau))\cdot F(\bx(\tau))<0$,
where $\cdot$ denotes inner vector product.  Then, as $n \to \infty$,
\begin{equation*}
\sqrt{n}\left(n^{-1}\bXn(\taun)-\bx(\tau)\right) \convD \bV(\tau)
-\frac{\nabla \varphi(\bx(\tau)\cdot \bV(\tau)}{\nabla \varphi(\bx(\tau)\cdot F(\bx(\tau))}F(\bx(\tau)).
\end{equation*}
\end{theorem}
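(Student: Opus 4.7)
The plan is to combine the functional central limit theorem for $\{\sqrt{n}(n^{-1}\bXn(\cdot)-\bx(\cdot))\}$ (Theorem~\ref{thm:fclt}) with a first-order Taylor expansion of $\varphi$ at the deterministic exit time $\tau$, using the transversality hypothesis $\nabla\varphi(\bx(\tau))\cdot F(\bx(\tau))<0$ to solve for the random correction $\sqrt{n}(\taun-\tau)$ and then substitute it back into the decomposition of $\sqrt{n}(n^{-1}\bXn(\taun)-\bx(\tau))$. This is the classical boundary-crossing argument of Ethier--Kurtz extended to asymptotically random initial conditions.

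First, I would establish that $\taun\convp\tau$. The law of large numbers (Theorem~\ref{thm:wlln}) gives $\sup_{0 \le u \le t}|n^{-1}\bXn(u)-\bx(u)|\convp 0$ on any compact time interval, so by continuity of $\varphi$ the process $\varphi(n^{-1}\bXn(\cdot))$ converges uniformly in probability to $\varphi(\bx(\cdot))$. The transversality assumption $\nabla\varphi(\bx(\tau))\cdot F(\bx(\tau))<0$ together with $\varphi(\bx(0))>0$ implies that $\varphi(\bx(\cdot))$ strictly decreases through zero at $\tau$, so $\varphi(\bx(u))<0$ for $u$ slightly greater than $\tau$ and $\varphi(\bx(u))>0$ for $u$ slightly less. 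Uniform convergence on $[0,\tau+\delta]$ then forces $\taun$ to lie in $(\tau-\delta,\tau+\delta)$ with probability tending to $1$ for any $\delta>0$.

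Next, I would handle the overshoot. Since $\Delta$ is finite, individual jumps of $n^{-1}\bXn$ are bounded by $Cn^{-1}$, so $\varphi(n^{-1}\bXn(\taun))=O(n^{-1})$; in particular $\sqrt{n}\,\varphi(n^{-1}\bXn(\taun))\convp 0$. Applying a Taylor expansion at $\bx(\tau)$ gives
\begin{equation*}
\varphi(n^{-1}\bXn(\taun))=\varphi(\bx(\tau))+\nabla\varphi(\bx(\tau))\cdot\bigl(n^{-1}\bXn(\taun)-\bx(\tau)\bigr)+R_n,
\end{equation*}
where $\varphi(\bx(\tau))=0$ and the remainder satisfies $R_n=o(|n^{-1}\bXn(\taun)-\bx(\tau)|)$. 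Decomposing
\begin{equation*}
n^{-1}\bXn(\taun)-\bx(\tau)=\bigl(n^{-1}\bXn(\taun)-\bx(\taun)\bigr)+\bigl(\bx(\taun)-\bx(\tau)\bigr),
\end{equation*}
and using $\bx(\taun)-\bx(\tau)=F(\bx(\tau))(\taun-\tau)+o(\taun-\tau)$ by differentiability of $\bx(\cdot)$, multiplying through by $\sqrt{n}$ and rearranging yields
\begin{equation*}
\sqrt{n}(\taun-\tau)=-\frac{\nabla\varphi(\bx(\tau))\cdot\sqrt{n}\bigl(n^{-1}\bXn(\taun)-\bx(\taun)\bigr)}{\nabla\varphi(\bx(\tau))\cdot F(\bx(\tau))}+o_P(1).
\end{equation*}

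Finally, the FCLT~\eqref{equ:FCLT} together with $\taun\convp\tau$ and the a.s.\ continuity of the Gaussian limit $\{\bV(t)\}$ (the sample paths are continuous because $\bx(\cdot)$ is $C^1$, so $\Phi(t,u)$ and $G(\bx(u))$ are continuous, whence the covariance function~\eqref{equ:covinit} is continuous on the diagonal) lets me invoke a Skorohod-type random time-change result to conclude $\sqrt{n}(n^{-1}\bXn(\taun)-\bx(\taun))\convD \bV(\tau)$. Substituting into the displayed expression gives the limit for $\sqrt{n}(\taun-\tau)$, and inserting this into
\begin{equation*}
\sqrt{n}\bigl(n^{-1}\bXn(\taun)-\bx(\tau)\bigr)=\sqrt{n}\bigl(n^{-1}\bXn(\taun)-\bx(\taun)\bigr)+F(\bx(\tau))\sqrt{n}(\taun-\tau)+o_P(1)
\end{equation*}
produces the claimed limit $\bV(\tau)-\frac{\nabla\varphi(\bx(\tau))\cdot\bV(\tau)}{\nabla\varphi(\bx(\tau))\cdot F(\bx(\tau))}F(\bx(\tau))$. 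The main technical obstacle is justifying the joint convergence of $\bigl(\sqrt{n}(n^{-1}\bXn(\taun)-\bx(\taun)),\taun\bigr)$: the exit-time functional is not continuous in the Skorohod topology in general, but continuity holds at the (almost sure) sample paths of the Gaussian limit under the transversality condition, which is why that hypothesis is essential; this is precisely the step that makes the argument in~\cite{EK86}, Theorem~11.4.1, work, and the extension to asymptotically random initial conditions follows because the conclusion only uses the FCLT, continuity of $\bV$ and $\bx$, and the deterministic transversality at $\tau$.
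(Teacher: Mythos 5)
Your proposal is correct and follows essentially the same route as the paper, which does not prove Theorem~\ref{thm:hitting} directly but presents it as a subset of Ethier and Kurtz (1986), Theorem 11.4.1, restated for asymptotically density dependent processes with random initial conditions; the boundary-crossing argument you sketch (consistency of $\taun$, negligible overshoot since jumps are $O(n^{-1})$, Taylor expansion of $\varphi$ combined with the transversality condition to solve for $\sqrt{n}(\taun-\tau)$, and substitution back via the FCLT with a random time change) is precisely the argument underlying that cited result. The one step to make fully rigorous is that $\sqrt{n}(\taun-\tau)=O_P(1)$ must be obtained before the $o(\taun-\tau)$ remainder can be discarded, but this follows from the same Taylor identity once $\sqrt{n}\left(n^{-1}\bXn(\taun)-\bx(\taun)\right)=O_P(1)$ is known, exactly as in Ethier--Kurtz.
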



\begin{cor}
\label{cor:hitting}
Suppose that the conditions of Theorem~\ref{thm:hitting} are satisfied.  Let
\begin{equation*}
B=I-\frac{F(\bx(\tau))\bigotimes \nabla \varphi(\bx(\tau))}{\nabla \varphi(\bx(\tau))\cdot F(\bx(\tau))},
\end{equation*}
where $\bigotimes$ denotes outer vector product.  Then,
\begin{equation*}
\sqrt{n}\left(n^{-1}\bXn(\taun)-\bx(\tau)\right) \convD {\rm N}(\bzero,B \Sigma(\tau) B^{\top}) \qquad \mbox{as }n \to \infty,
\end{equation*}
where
\begin{equation*}
\Sigma(\tau)=\Phi(\tau,0)\Sigma_0 \Phi(\tau,0)^{\top} + \int_0^{\tau}\Phi(\tau,u)
G(\bx(u))\Phi(\tau,u) ^{\top} \,{\rm d}u.
\end{equation*}
\end{cor}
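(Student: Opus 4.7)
The plan is to deduce Corollary~\ref{cor:hitting} from Theorem~\ref{thm:hitting} essentially by algebraic rearrangement, using the Gaussianity of the limiting process $\{\bV(t):t\ge 0\}$ provided by Theorem~\ref{thm:fclt}. First I would simply invoke Theorem~\ref{thm:hitting}, whose hypotheses are assumed in the corollary, to obtain
\begin{equation*}
\sqrt{n}\left(n^{-1}\bXn(\taun)-\bx(\tau)\right) \convD \bV(\tau)
-\frac{\nabla \varphi(\bx(\tau))\cdot \bV(\tau)}{\nabla \varphi(\bx(\tau))\cdot F(\bx(\tau))}F(\bx(\tau)).
\end{equation*}

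Next I would rewrite the right-hand side as a single linear image of $\bV(\tau)$. Treating all vectors as rows (per the paper's convention), the scalar $\nabla \varphi(\bx(\tau))\cdot \bV(\tau)$ equals $\bV(\tau)\,\nabla\varphi(\bx(\tau))^{\top}$, and the outer product $F(\bx(\tau))\bigotimes \nabla \varphi(\bx(\tau))$ is the $p\times p$ matrix $F(\bx(\tau))^{\top}\nabla \varphi(\bx(\tau))$. A short manipulation (just associativity of matrix multiplication combined with the fact that a scalar times a row vector can be reordered) shows that the limit in the display above is equal to $\bV(\tau)B^{\top}$, with $B$ as defined in the corollary. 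This is the only substantive, if routine, step and the natural place to be careful with bookkeeping between rows, columns and the outer-product convention.

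Finally, Theorem~\ref{thm:fclt} gives that $\bV(\tau)$ is multivariate normal with mean $\bzero$ and covariance matrix obtained by setting $t_1=t_2=\tau$ in~\eqref{equ:covinit}, which coincides with the matrix $\Sigma(\tau)$ appearing in the corollary statement. Since affine maps preserve multivariate normality, $\bV(\tau)B^{\top} \sim {\rm N}(\bzero, B\Sigma(\tau) B^{\top})$, and the corollary follows on combining this with the convergence in distribution from the first step via the continuous mapping theorem (the map $\bv\mapsto \bv B^{\top}$ is linear, hence continuous).

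The proof contains no genuine obstacle: Theorem~\ref{thm:hitting} does all the analytic work, and what is left is a reformulation of a specific linear functional of a Gaussian vector. The only item requiring slight care is matching the outer product convention in the definition of $B$ to the order in which $\bV(\tau)$ and the various matrices are multiplied, so that the resulting variance matrix is indeed $B\Sigma(\tau)B^{\top}$ rather than, for instance, its transpose.
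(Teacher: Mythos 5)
Your proposal is correct and follows essentially the same route as the paper, which likewise deduces the corollary immediately from Theorems~\ref{thm:fclt} and~\ref{thm:hitting} by observing that the limit random vector equals $\bV(\tau)B^{\top}$ and that $\bV(\tau)\sim{\rm N}(\bzero,\Sigma(\tau))$. Your extra care over the row-vector and outer-product conventions is exactly the one bookkeeping point the paper's one-line identity relies on.
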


\begin{proof}
Corollary~\ref{cor:hitting} follows immediately from Theorems~\ref{thm:fclt} and~\ref{thm:hitting} on
noting that
\[
\bV(\tau)
-\frac{\nabla \varphi(\bx(\tau)\cdot \bV(\tau)}{\nabla \varphi(\bx(\tau)\cdot F(\bx(\tau))}F(\bx(\tau))=\bV(\tau)B^{\top}.
\]
\end{proof}

\section{Proofs}
\label{sec:proofs}
\subsection{Alternative construction of final size $\Tn$}
\label{sec:altconstr}
We describe first the well-known construction of the final outcome of the epidemic
$\EEn$ using a random directed graph.  We then use that construction to show that
$\Tn$ can be realised using the location of the first exit of an asymptotically density dependent population
process from a given region.  

Given a realisation of $\GGn$, construct a directed random graph $\GGtn$, having vertex set
$\NNn=\{1,2,\dots,n\}$, as follows.
For each $i=1,2,\dots,n$, by sampling from its infectious period distribution $I$ and 
then the relevant Poisson processes, draw up a list of individuals $i$ would make 
contact with if $i$ were to become infected.  Then, for each ordered pair of individuals,
$(i,j)$ say, a directed edge from $i$ to $j$ is present in $\GGtn$ if and only if $j$ is in
$i$'s list.  Let $\IIn$ denote the set of initial infectives in $\EEn$. For distinct
$i, j \in \NNn$, write $i \leadsto j$ if and only if there is a chain of directed edges
from $i$ to $j$ in $\GGtn$.  Let $\TTn$ be the set of initial susceptibles that are infected in
$\EEn$.  Then $\TTn=\{j \in \NNn \setminus \IIn: i \leadsto j \mbox{ for some i }\in \IIn\}$
and the final size $\Tn$ of $\EEn$ is given by the cardinality of $\TTn$.

Note that $\TTn$ is determined purely by the presence/absence of directed edges in $\GGtn$
and does not depend on the times of the corresponding infections in $\EEn$.  (This implies that
the distribution of $\TTn$, and hence also $\Tn$ is invariant to the introduction of a
latent/exposed period into the model, i.e.~the time elapsing after infection of an individual before
it is able to infect other individuals.)
It follows that
the final outcome $\TTn$ has the same distribution as that of a related epidemic $\EEtn$,
with set of initial infectives $\IIn$, in which for any infective, $i$ say, it is determined
upon infection which, if any, of its neighbours $i$ will contact and those contacts take place
at the first points of independent Poisson processes, each having rate $1$.  More precisely,
suppose $i$ is infected at time $t_0$ and $i$ has $d$ neighbours, $i_1,i_2,\dots,i_d$ say.
Let $I_i$ be a realisation of $I$ and, given $I_i$, let $\chi_{i1}, \chi_{i2},\dots, \chi_{id}$ be 
i.i.d.~Bernoulli random variables with success probability $1-\exp(-\lambda I_i)$.  Let
$W_{i1},W_{i2},\dots,W_{id}$ be an independent set of i.i.d.~unit-mean exponential random random variables.  
Then $i$ contacts $i_j$ if and only if $\chi_{ij}=1$, and in that case the contact occurs at time 
$t_0+W_{ij}$.  Of course, the $I$ and $W$ random variables for any set of distinct infectives are mutually independent.

Given the degrees $\Dn_1,\Dn_2,\dots,\Dn_n$ and the set of initial infectives $\IIn$, 
the random graph $\GGn$ and the epidemic on it $\EEtn$ can be constructed simultaneously as follows
(cf.~\cite{BN:2008}).  The process starts with no half-edge paired. The individuals in $\IIn$ become infected at time $t=0$ and all other individuals are
susceptible.  When an individual is infected, it determines immediately which, if any, of its half-edges it
will transmit infection along and when it will make those contacts, according to the probabilistic law described
above; the half-edges that the indivdiual will infect along and not infect along are then called infective and recovered half-edges,
respectively.  When infection is transmitted along a half-edge that half-edge is paired with a half-edge chosen uniformly
at random from all unpaired half-edges, forming an edge in the network.  If the chosen half-edge is attached to a susceptible individual then that 
individual is infected and determines immediately along which, if any, of its remaining half-edges it will transmit infection.  If the 
chosen half-edge is infective or recovered then nothing happens, apart from the two half-edges being replaced by an edge.
The process continues until there is no infective half-edge remaining.  (In the epidemic on the NSW random graph, if 
$\Dn_1+\Dn_2+\dots+\Dn_n$ is odd then it is possible for the process to end with one unpaired half-edge, which is
infective, but we can ignore that possibility because under the conditions of the theorems its probability
tends to $0$ as $n \to \infty$.)

Note that as we are interested only in the final outcome of the epidemic, it is not necessary to keep
track of the degrees of individuals to which infective and recovered half-edges are attached; we
need to know just the total numbers of such half-edges.  For $t \ge 0$, let $\Xn_i(t)$ be the number 
of degree-$i$ susceptible individuals at time $t$ $(i=0,1,\dots,\dmax)$, and let $\Yne(t)$ and $\Zne(t)$ be the
total numbers of infective and recovered half-edges, respectively, at time $t$. Let $\{\bWn(t)\}=\{\bWn(t):t \ge 0\}$,
where $\bWn(t)=(\bXn(t), \Yne(t), \Zne(t))$ and $\bXn(t)=(\Xn_0(t), \Xn_1(t),\dots,\Xn_{\dmax}(t))$.

The process $\{\bWn(t)\}$ is a continuous-time Markov chain, whose initial state  $\bWn(0)$ is
random, even for the epidemic on the MR random graph as the numbers of infective and recovered
half-edges created by the initial infectives are random.  In the epidemic on the NSW random graph,
$\Xn_i(t)$ $(i=0,1,\dots,\dmax)$ are also random.  Before giving the possible transition intensities of $\{\bWn(t)\}$
some more notation is required.

Let $\Hn$ denote the state space of  $\{\bWn(t)\}$ and $\bn=(n^X_0,n^X_1,\dots,n^X_{\dmax},n^Y_E,n^Z_E)$ 
denote a typical element of $\Hn$.  Let $n^X_E=\sum_{i=1}^{\dmax} i n^X_i$.  Define the unit vectors $\bes_i$ $(i=0,1,\dots,\dmax)$, $\bei$ and
$\ber$ on $\Hn$, where, for example, $\bes_i$ has a one in the $i$th suscpetible component and zeros
elsewhere.  For $i=1,2,\dots,\dmax$ and $k=0,1,\dots, i-1$, let $p_{i,k}$ be the probability that if a degree-$i$
susceptible is infected it subsequently transmits infection along $k$ of its remaining $i-1$ half-edges.  
(Note that a degree-$0$ susceptible cannot be infected.) 
Thus
$p_{i,k}={\rm P}(X=k)$, where $X\sim {\rm Bin}(i-1,1-\exp(-\lambda I))$.  The transition intensities of $\{\bWn(t)\}$
are as follows.

\begin{enumerate}
\item[(i)]
For $i=1,2,\dots,\dmax$ and $k=0,1,\dots, i-1$, an infective half-edge is paired with a degree-$i$ 
susceptible yielding $k$ infective half-edges and $i-1-k$ recovered half-edges
\[
\qn(\bn, \bn-\bes_i+(k-1)\bei+(i-k-1)\ber)=\frac{n^Y_E i n^X_i p_{i,k}}{n^X_E+n^Y_E+n^Z_E-1};
\]
\item[(ii)]
an infective half-edge is paired with an infective half-edge
\[
\qn(\bn, \bn-2\bei)=\frac{n^Y_E(n^Y_E-1)}{n^X_E+n^Y_E+n^Z_E-1};
\]
\item[(iii)]
an infective half-edge is paired with a recovered half-edge
\[
\qn(\bn, \bn-\bei-\ber)=\frac{n^Y_E n^Z_E}{n^X_E+n^Y_E+n^Z_E-1}.
\]
\end{enumerate}
Note that these transition intensities are independent of the population size $n$.  We index then by $n$ to
connect with theory of density dependent population  processes.

Let $\taun=\inf\{t \ge 0: \Yne(t)=0\}$.  Then $\Xn_i(\taun)$ $(i=0,1,\dots,\dmax)$ give the 
numbers of susceptibles of the different degrees at the end of the epidemic and
$\Tn=\sum_{i=1}^{\dmax}(\Xn_i(0)-\Xn_i(\taun)$. 
\subsection{Random time-scale transformation}
\label{sec:timetrans}

 We wish to apply Corollary~\ref{cor:hitting} to obtain a central limit theorem for $\Tn$ but that corollary cannot be applied directly to
$\{\bWn(t)\}$ as $\taun \convp \infty$ as $n \to \infty$.  Thus we consider the following
random time-scale transformation of $\{\bWn(t)\}$; cf.~\cite{EK86}, page 467, \cite {Watson80} and~\cite{JLW:2014}.

For $t \in [0, \taun]$, let
\[
\An(t)=\int_0^t \frac{\Yne(u)}{\Xne(u)+\Yne(u)+\Zne(u)-1} \,{\rm d}u,
\]
where $\Xne(u)=\sum_{i=1}^{\dmax} i \Xn_i(u)$. Let $\tautn=\An(\taun)$. For $t \in [0,\tautn]$, let $\Un(t)=\inf\{u \ge 0:\An(u)=t\}$ and
$\bWnt(t)=\bWn\left(\Un(t)\right)$.  
Then $\{\bWnt(t)\}=\{\bWnt(t): 0 \le t \le \tautn\}$ is a continuous-time Markov chain with transition intensities:
\begin{enumerate}
\item[(i)]
for $i=1,2,\dots,\dmax$ and $k=0,1,\dots, i-1$, 
\[
\qtn(\bn, \bn-\bes_i+(k-1)\bei+(i-k-1)\ber)=i n^X_i p_{i,k};
\]
\item[(ii)]
\[
\qtn(\bn, \bn-2\bei)=n^Y_E-1;
\]
\item[(iii)]
\[
\qtn(\bn, \bn-\bei-\ber)=n^Z_E.
\]
\end{enumerate}
For $t \ge 0$, let $\bXnt(t)=(\Xnt_0(t), \Xnt_1(t),\dots,\Xnt_{\dmax}(t))$, so
$\bWnt(t)=(\bXnt(t), \Ynet(t), \Znet(t))$.

For reasons that will become clear later, we need to extend $\{\bWnt(t)\}$ so that it is defined beyond time
$\tautn$ and allow $\Ynet(t)$ and $\Znet(t)$ to take negative values.  Thus we add two new transition intensities:
\begin{enumerate}
\item[(ii$'$)]
for $n^Y_E<0$,
\[
\qtn(\bn, \bn+2\bei)=-(n^Y_E+1);
\]
\item[(iii$'$)]
for $n^Z_E<0$,
\[
\qtn(\bn, \bn+\bei+\ber)=-n^Z_E.
\]
\end{enumerate}
Note that the transition intensities (ii) and (iii) are defined only for $n^Y_E>0$ and $n^Z_E>0$,
respectively.

The state space of $\{\bWnt(t)\}$ is a subset of 
\[
\Hnt=\left([0, n]^{\dmax+1}\times (-\infty,n\dmax]^2\right) \cap \mathbb{Z}^{\dmax+3}.
\]  
Let $\bl_{ik}^{(1)}=-\bes_i+(k-1)\bei+(i-k-1)\ber$ $(i=1,2,\dots,\dmax;k=0,1,\dots,i-1)$, $\bl^{(2)}_+=-2\bei$,
$\bl^{(2)}_-=2\bei$, $\bl^{(3)}_+=-\bei-\ber$ and $\bl^{(3)}_+=\bei+\ber$.  The set of possible jumps of
$\{\bWnt(t)\}$ from a typical state $\bn$ is $\Delta=\Delta_1 \cup \Delta_2 \cup \Delta_3$, where
$\Delta_1=\{\bl^{(1)}_{ik}:i=1,2,\dots,\dmax;k=0,1,\dots,i-1\}$, $\Delta_2=\{\bl^{(2)}_+,\bl^{(2)}_-\}$
and $\Delta_3=\{\bl^{(3)}_+,\bl^{(3)}_-\}$.  
Let $\bw=(\bx,y_E,z_E)$, where $\bx=(x_0,x_1,\dots,x_{\dmax})$, and $H=[0,1]^{\dmax+1}\times (-\infty,\dmax]^2$.
The intensities of the jumps of $\{\bWnt(t)\}$ admit the
form
\begin{equation}
\label{equ:aDDPP}
\qtn(\bn,\bn+\bl)=n\betat_{\bl}^{(n)}(n^{-1}\bn)\qquad(\bn \in \Hnt, \bl \in \Delta),
\end{equation}
where the functions $\betat_{\bl}^{(n)}:H \to \mathbb{R}_+$ $(\bl \in \Delta)$ are given by
\begin{equation}
\label{equ:betanltilde}
\betat_{\bl}^{(n)}(\bw)=\begin{cases}
              \betat_{\bl}(\bw) & \text{ if } \bl \in \Delta \setminus \{\bl^{(2)}_+,\bl^{(2)}_-\},\\
              \betat_{\bl}(\bw)-n^{-1} &\text{ if } \bl=\bl^{(2)}_+,\\
              \betat_{\bl}(\bw)+n^{-1} &\text{ if } \bl=\bl^{(2)}_-,
              \end{cases}
\end{equation}
with
\begin{equation}
\label{equ:intensityfun}
\betat_{\bl}(\bw) = \begin{cases}
	      \betat_{ik}^{(1)}(\bx,y_E,z_E)=ix_i p_{i,k}& \text{ for } \bl=\bl_{ij}^{(1)} \in \Delta_1, \\
	      \betat^{(2)}_+(\bx,y_E,z_E)=y_E 1_{\{y_E>0\}}& \text{ for } \bl=\bl^{(2)}_+,\\
	      \betat^{(2)}_-(\bx,y_E,z_E)=-y_E 1_{\{y_E<0\}}& \text{ for } \bl=\bl^{(2)}_-,\\
	      \betat^{(3)}_+(\bx,y_E,z_E)=z_E 1_{\{z_E>0\}}& \text{ for } \bl=\bl^{(3)}_+,\\
	      \betat^{(3)}_-(\bx,y_E,z_E)=-z_E 1_{\{z_E<0\}}& \text{ for } \bl=\bl^{(3)}_-.	      
\end{cases}
\end{equation}
The family of processes $\{\bWnt(t)\}$ is asymptotically density dependent (see Section~\ref{sec:DPPP}).

\subsection{Proof of Theorem~\ref{thm:posclt}}
\label{sec:proofposclt}
Note that $\bWn(\taun)=\bWnt(\tautn)$, so the final size of the epidemic is given by
\begin{equation}
\label{equ:Tn}
\Tn=\sum_{i=0}^{\dmax}\left(\Xnt_i(0)-\Xnt_i(\tautn)\right).
\end{equation}
Note also that $\tautn=\inf\{t \ge 0:\Ynet(t)=0\}$.
We use Corollary~\ref{cor:hitting}
to obtain a central limit theorem for $\bWnt(\tautn)$, and hence for
$\Tn$.  The asymptotic variance matrix in the central limit theorem for $\bWnt(\tautn)$ is not in
closed form.  However, we derive a closed-form expression for the asymptotic variance of $\Tn$. The main concepts of the proof are presented here, with some detailed but elementary calculations
deferred to Appendix~\ref{app:details}.
\subsubsection{Deterministic model}
\label{section:detmodel}
As at~\eqref{equ:driftF}, define the drift function $\tilde{F}(\bw)=\sum_{\bl \in \Delta}\bl \betat_{\bl}(\bw)$.
Using~\eqref{equ:intensityfun},
\begin{align}
\label{equ:driftFw}
\tilde{F}(\bw)&=-\sum_{i=1}^{\dmax} i x_i \bes_i+\left\{\sum_{i=1}^{\dmax} i x_i[(i-1)p_I-1] -2y_E-z_E\right\}\bei\\
&\qquad\qquad+\left[\sum_{i=1}^{\dmax} i(i-1)x_i q_I-z_E\right]\ber.\nonumber
\end{align}
For $t \ge 0$, let $\bwt(t)=(\xt_0(t),\xt_1(t),\dots,\xt_{\dmax}(t),\yet(t),\zet(t))$ be defined by
\begin{equation}
\label{equ:bwt}
\bwt(t)=\bwt(0)+\int_0^t \tilde{F}(\bw(u)) \,{\rm d}u.
\end{equation}
Thus $\bwt(t)$ satisfies the differential equations
\begin{align}
\dfrac{d\xt_i}{dt}&=-i \xt_i \quad (i=0,1,\dots,\dmax), \label{equ:dxtidt}\\
\dfrac{d\yet}{dt}&=\sum_{i=2}^{\dmax} i(i-1)p_I \xt_i-\xet-2\yet-\zet,\label{equ:dyetdt}\\
\dfrac{d\zet}{dt}&= \sum_{i=2}^{\dmax} i(i-1)q_I\xt_i-\zet,\label{equ:dzetdt}
\end{align}
where $\xet=\sum_{i=1}^{\dmax}i\xt_i$, having solution (see Appendix~\ref{app:det})
\begin{align}
\xt_i(t)&=\xt_i(0)\re^{-it}\quad (i=0,1,\dots,\dmax),\label{equ:xtilde}\\
\yet(t)&=(\xet(0)+\yet(0)+\zet(0))\re^{-2t}-[\zet(0)+q_I\xet(0)]\re^{-t}\label{equ:yetilde}\\
&\qquad-p_I\sum_{i=1}^{\dmax}i\xt_i(0)\re^{-it},\nonumber\\
\zet(t)&=[\zet(0)+q_I\xet(0)]\re^{-t}-q_I \sum_{i=1}^{\dmax}i\xt_i(0)\re^{-it}.\label{equ:zetilde}
\end{align}
Let $\etat(t)=\xet(t)+\yet(t)+\zet(t)$.  
Note that, for $t \ge 0$,
\begin{equation}
\label{equ:eta}
\etat(t)=\etat(0)\re^{-2t}.
\end{equation}

\subsubsection{Initial conditions}
\label{section:initcond}

Consider first the epidemic on the MR random graph. Recall that in the epidemic $\EEn$, for $i=0,1,\dots,\dmax$, there are
$\vn_i$ individuals of degree $i$, of whom $\an_i$ are initially infected.  Also, $\left(\Ynet(0), \Znet(0)\right)$ is given by
the total numbers of infective and recovered half-edges created by the initial infectives.
Thus $\Xnt_i(0)=\vn_i-\an_i$ ($i=0,1,\dots,\dmax$) and
\[
\left(\Ynet(0), \Znet(0)\right)=\sum_{i=1}^{\dmax}\sum_{j=1}^{\an_i}
\left(Y_{ij},i-Y_{ij}\right),
\]
where $Y_{ij}$ $(i=1,2,\dots,\dmax;j=1,2,\dots)$ are independent and
$Y_{ij} \sim {\rm Bin}(i,1-\exp(-\lambda I))$. Let $\epsilon_E=\sum_{i=1}^{\dmax} i \epsilon_i$.  Then, recalling that
$\epsilonn_i=n^{-1} \an_i$,
\begin{align*}
\sqrt{n}&\left[n^{-1}\left(\Ynet(0), \Znet(0)\right)-\epsilon_E(p_I, q_I)\right]\\
&=\frac{1}{\sqrt{n}}\sum_{i=1}^{\dmax}  \left[\sum_{j=1}^{\an_i}\left(Y_{ij},i-Y_{ij}\right)-\an_i i (p_I,q_I)\right]
+\sqrt{n}\sum_{i=1}^{\dmax} i (\epsilonn_i-\epsilon_i).
\end{align*}

For $i=1,2,\dots,\dmax$, let $\sigma_{Y,i}^2={\rm var}(Y_{i1})$.  Now $\lim_{n \to \infty}\sqrt{n}(\epsilonn_i-\epsilon_i) = 0$ $(i=0,1,\dots,\dmax)$, by assumption,
so the central limit theorem and Slutsky's theorem imply that
\[
\sqrt{n}\left[n^{-1}\left(\Ynet(0), \Znet(0)\right)-\epsilon_E(p_I, q_I)\right]
\convD {\rm N}\left(\bzero, \begin{bmatrix}
\sigma_Y^2 & -\sigma_Y^2 \\
-\sigma_Y^2 &  \sigma_Y^2
\end{bmatrix}\right)
\]
as $n \to \infty$, where
\begin{equation}
\label{equ:sigma2Y}
\sigma_Y^2=\sum_{i=1}^{\dmax} \epsilon_i \sigma_{Y,i}^2.
\end{equation}
(A closed-form expression for $\sigma_Y^2$ is given by~\eqref{equ:sigmaY2} in Appendix~\ref{app:sigma0MR2}.)
Since $\bXnt(0)$ is non-random, it follows using~\eqref{equ:vni} that
\begin{equation}
\label{equ:MRinitCLT}
\sqrt{n}\left(n^{-1}\bWnt(0)-\bwt(0)\right) \convD {\rm N}\left(\bzero,\SigmaMR_0\right) \qquad \mbox{as }
n \to \infty,
\end{equation}
where
\begin{equation}
\label{equ:MRdetinit}
\bwt(0)=(p_1-\epsilon_1, p_2-\epsilon_2,\cdots,p_{\dmax}-\epsilon_{\dmax},\epsilon_E p_I,\epsilon_E q_I)
\end{equation}
and
\begin{equation}
\label{equ:SigmaMR0}
\SigmaMR_0=
\begin{bmatrix}
0 & \bzero & \bzero\\
\bzero & \sigma_Y^2 & -\sigma_Y^2 \\
\bzero & -\sigma_Y^2 & \sigma_Y^2
\end{bmatrix}.
\end{equation}

Turning to the epidemic on the NSW random graph, recall that in $\EEn$ the number of initial infectives
$\an$ is prescribed and the $\an$ initial infectives are chosen by sampling uniformly at random without replacement from the 
population.  Thus the network can be constructed using two independent sets of i.i.d.~copies of $D$,
viz.~$D_1',D_2',\dots,D_{n-\an}'$ for the initial susceptibles and $D_1,D_2,\dots,D_{\an}$ for the initial 
infectives.  Let $(Y_E,Z_E)$ be the bivariate random variable obtained by first sampling $D$ and then
letting $(Y_E,Z_E)=(Y_E,D-Y_E)$, where $Y_E|D~\sim{\rm Bin}(D,1-\exp(-\lambda I))$.  Let
$\sigma_{Y_E}^2={\rm var}(Y_E)$, $\sigma_{Y_E, Z_E}={\rm cov}(Y_E, Z_E)$ and $\sigma_{Y_E}^2={\rm var}(Y_E)$. 
(Closed-form expressions for $\sigma_{Y_E}^2$, $\sigma_{Z_E}^2$ and $\sigma_{Y_E, Z_E}$ are given 
in~\eqref{equ:sigmaYe2}-\eqref{equ:sigmaYeZe2} in Appendix~\ref{app:sigma0NSW2}.) 
Let
$\bp=(p_0,p_1,\dots,p_{\dmax})$ and $\Sigma_{XX}$ be the $(\dmax+1) \times (\dmax+1) $ matrix with elements
\begin{equation}
\label{equ:SigmaXX}
(\Sigma_{XX})_{ij}=\begin{cases}
	      -p_ip_j& \text{ if } i \ne j, \\
	      p_i(1-p_i) & \text{ if } i=j.
\end{cases}
\end{equation}

Recalling that $\lim_{n \to \infty}\sqrt{n}(\epsilonn-\epsilon)=0$, where $\epsilonn=n^{-1}a^{(n)}$,
a similar argument to the above shows that 
\begin{equation}
\label{equ:NSWinitCLT}
\sqrt{n}\left(n^{-1}\bWnt(0)-\bwt(0)\right) \convD {\rm N}\left(\bzero,\SigmaNSW_0\right) \qquad \mbox{as }
n \to \infty,
\end{equation}
where
\begin{equation}
\label{equ:NSWdetinit}
\bwt(0)=\left((1-\epsilon) \bp,\epsilon \mud p_I,\epsilon \mud q_I\right)
\end{equation}
and
\begin{equation}
\label{equ:SigmaNSW0}
\SigmaNSW_0=
\begin{bmatrix}
(1-\epsilon)\Sigma_{XX}& \bzero & \bzero\\
\bzero & \epsilon \sigma_{Y_E}^2 & \epsilon \sigma_{Y_E, Z_E} \\
\bzero & \epsilon\sigma_{Y_E, Z_E} & \epsilon \sigma_{Z_E}^2
\end{bmatrix}.
\end{equation}

\subsubsection{Central limit theorem}
\label{section:clt}
Noting from~\eqref{equ:betanltilde} that 
\[
\max_{\bl \in \Delta}\sup_{\bw \in H}|\betat_{\bl}^{(n)}(\bw)-\betat_{\bl}(\bw)|=n^{-1},
\]
it is easily checked that $\{\bWnt(t)\}$ satisfies the conditions of Theorem~\ref{equ:FCLT}.  Now
$\tautn=\inf\{t \ge 0: \varphi(n^{-1}\bWnt(t))\le 0\}$, where $\varphi(\bw)=\varphi(\bx,y_E,z_E)=y_E$.

Suppose that $\xt_i(0)= p_i- \epsilon_i$ $(i=0,1,\dots,\dmax)$, $\yet(0)=p_I \sum_{i=1}^{\dmax}i \epsilon_i$ and
$\zet(0)=q_I \sum_{i=1}^{\dmax}i \epsilon_i$, so $\etat(0)=\mud$.  Let $\taut=\inf\{t \ge 0:\varphi(\bwt(t)) \le 0\}=\inf\{t \ge 0:\yet(t)=0\}$.  Then it
follows from~\eqref{equ:yetilde} that $\taut$ satisfies the equation
\begin{equation}
\label{equ:taut}
\re^{-\taut}-q_I-\mud^{-1}p_I\fde^{(1)}(\re^{-\taut})=0.
\end{equation}
We show in Appendix~\ref{app:tautprop} that, under the conditions of Theorem~\ref{thm:posclt}, the 
equation~\eqref{equ:taut} has a unique solution in $(0,\infty)$.  
Note that $z=\re^{-\taut}$, where $z$ is defined at~\eqref{equ:z}.  Also, using~\eqref{equ:xtilde} the 
deterministic final fraction of the population that is susceptible is given by
\[
\sum_{i=0}^{\dmax} \xt_i(\taut)=\sum_{i=0}^{\dmax} (p_i-\epsilon_i) \re^{-i \taut}=\fde(\re^{-\taut}).
\]
The corresponding deterministic final size is $\rho=1-\epsilon-\fde(\re^{-\taut})$, agreeing with~\eqref{equ:rho}.

Let $a(\taut)=\nabla \varphi(\bw(\taut))\cdot \tilde{F}(\bw(\taut))$.  Then, using~\eqref{equ:driftFw}, and noting that
$\yet(\taut)=0$,
\begin{eqnarray*}
a(\taut)&=&\sum_{i=1}^{\dmax}i \xt_i(\taut)[(i-1)p_I-1]-2 \yet(\taut) -\zet(\taut)\\
&=&p_I\sum_{i=1}^{\dmax}i(i-1)\xt_i(\taut)-\etat(\taut).
\end{eqnarray*}
Thus, using~\eqref{equ:xtilde} and~\eqref{equ:eta},
\begin{equation}
\label{equ:ataut}
a(\taut)=\re^{-2\taut}\left(p_I\fde^{(2)}(\re^{-\taut})-\mud\right),
\end{equation}
since $\etat(0)=\mud$.
We show in Appendix~\ref{app:tautprop} that $a(\taut)< 0$, so we may apply Corollary~\ref{cor:hitting}.

Writing $\bwt=(\tilde{w}_0,\tilde{w}_1,\dots,\tilde{w}_p)$, where $p=\dmax+2$, let $\Phit(t,u)=[\tilde{\phi}_{ij}(t,u)]$
$(0 \le u \le t < \infty)$, where
\begin{equation}
\label{equ:phit}
\tilde{\phi}_{ij}(t,u)=\dfrac{\partial \tilde{w}_i(t-u)}{\partial \tilde{w}_j(0)} \qquad(i,j=0,1,\dots,p).
\end{equation}
Also, let
\begin{equation}
\label{equ:Sigmat}
\Sigmat(\taut)=\Phit(\taut,0)\Sigma_0 \Phit(\taut,0)^{\top}+\int_0^{\taut}\Phit(\taut,u)
\tilde{G}(\bwt(u))\Phit(\taut,u) ^{\top} \,{\rm d}u,
\end{equation}
where
\begin{equation}
\label{equ:Gtilde}
\tilde{G}(\bwt(u))=\sum_{\bl \in \Delta} \bl^{\top}\bl \betat_{\bl}(\bwt(u)),
\end{equation}
and $\Sigma_0=\SigmaMR_0$ or $\SigmaNSW_0$ dependening on whether the epidemic is on
an MR or an NSW random graph.  Then application of Corollary~\ref{cor:hitting} yields
\begin{equation}
\label{equ:Wntildeclt}
\sqrt{n}\left(n^{-1}\bWnt(\tautn)-\bwt(\taut)\right) \convD {\rm N}(\bzero,B \Sigmat(\taut) B^{\top})  \qquad \mbox{as }n \to \infty,
\end{equation}
where
\[
B=I-\frac{\tilde{F}(\bwt(\taut))\bigotimes \nabla \varphi(\bwt(\taut))}{\nabla \varphi(\bwt(\taut))\cdot \tilde{F}(\bwt(\taut))}.
\]
Thus, recalling~\eqref{equ:Tn},
\[
\sqrt{n}\left(n^{-1}\Tn-\rho\right) \convD {\rm N}(0, \sigma^2)\qquad \mbox{as }n \to \infty,
\]
where
\begin{equation}
\label{equ:sigma2}
\sigma^2=(\bone,0,0)B\Sigmat(\taut)B^{\top}(\bone,0,0)^{\top}.
\end{equation}

Now $\nabla \varphi(\bwt(\taut))=(\bzero,1,0)$ and, using~\eqref{equ:driftFw}, $(\bone,0,0)[\tilde{F}(\bwt(\taut))]^{\top}=-\xet(\taut)$, 
so $(\bone,0,0)B=(\bone, b(\taut),0)$, where 
\begin{equation}
\label{equ:btaut}
b(\taut)=a(\taut)^{-1}\xet(\taut).
\end{equation}
Further, it follows 
from~\eqref{equ:xtilde}-\eqref{equ:zetilde} and~\eqref{equ:phit} that
\begin{equation*}
\left[(\bone,0,0)\Phit(\taut,u)\right]_i=\re^{-i(\taut-u)}\qquad (i=0,1\dots,\dmax)
\end{equation*}
and
\begin{equation}
\label{equ:bzero10phit}
\left[(\bzero,1,0)\Phit(\taut,u)\right]_i=
\begin{cases} 
i\left[\re^{-2(\taut-u)}-q_I\re^{-(\taut-u)}-p_I\re^{-i(\taut-u)}\right]& \text{ if } i=0,1,\dots,\dmax, \\
	      \re^{-2(\taut-u)} & \text{ if } i=\dmax+1,\\
	      -\re^{-(\taut-u)}(1-\re^{-(\taut-u)}) & \text{ if } i=\dmax+2.
\end{cases}
\end{equation}
Further, let
\begin{eqnarray}
\label{equ:bc}
\bc(\taut,u)&=&(\bone,0,0)B\Phit(\taut,u)\nonumber\\
&=&\left(\bc_S(\taut,u),c_I(\taut,u),c_R(\taut,u)\right),
\end{eqnarray}
where 
\begin{align}
\bc_S(\taut,u)&=\left(c_0(\taut,u), c_1(\taut,u),\dots,c_{\dmax}(\taut,u)\right),\label{equ:bcs}\\
c_I(\taut,u)&=b(\taut)\re^{-2(\taut-u)},\label{equ:cI}\\
c_R(\taut,u)&=-b(\taut)\re^{-(\taut-u)}(1-\re^{-(\taut-u)})\label{equ:cR}
\end{align}
and, for $i=0,1,\dots,\dmax$, 
\begin{equation}
\label{equ:ci}
c_i(\taut,u)=\re^{-i(\taut-u)}+b(\taut)i\left[\re^{-2(\taut-u)}-p_I \re^{-i(\taut-u)}-q_I \re^{-(\taut-u)}\right].
\end{equation}
Noting that $\bc(\taut,u)\bl^{\top}$ is a scalar, it follows from~\eqref{equ:Sigmat} and~\eqref{equ:sigma2} that
\begin{equation}
\label{equ:sigma2a}
\sigma^2=\bc(\taut,0)\Sigma_0 \bc(\taut,0)^{\top}+\sum_{\bl \in \Delta}\int_0^{\taut} \left(\bc(\taut,u)\bl^{\top}\right)^2
\betat_{\bl}(\bwt(u))\,{\rm d}u.
\end{equation}

The asymptotic variances, $\sigmaMR$ and $\sigmaNSW$ in Theorem~\ref{thm:posclt} can be obtained by
substituting $\Sigma_0=\SigmaMR_0$ and $\Sigma_0=\SigmaNSW_0$, respectively, in~\eqref{equ:sigma2a}
and using~\eqref{equ:bc}-\eqref{equ:ci}, together with~\eqref{equ:intensityfun} and~\eqref{equ:xtilde}-\eqref{equ:zetilde}, to evaluate the second term on the
right-hand side of~\eqref{equ:sigma2a}.  The details are lengthy and may be found in Appendix~\ref{app:asymvar}.

\subsection{Proof of Theorem~\ref{thm:majclt}}
\label{sec:proofmajclt}
We prove Theorem~\ref{thm:majclt} for the epidemic on an NSW random graph.  The proof for the epidemic on an MR random graph 
is similar but simpler, as there is no randomness in the degrees of individuals, and is thus omitted.
The proof proceeds in two stages.  First, in Section~\ref{sec:coupling}, we couple the early stages of the epidemic
$\EEtn$, defined in Section~\ref{sec:altconstr}, to a two-type branching process $\BBtn$ which assumes that all infective half-edges in
$\EEtn$ are paired with susceptible half-edges. The branching processes $\BBtn$ $(n=1,2,\dots)$ are coupled to a limiting branching
process $\BBt$.  The couplings and standard properties of the limiting branching process $\BBt$ show that, with probability tending
to $1$ as $n \to \infty$, a major outbreak occurs if and only if the branching process $\BBt$ does not go extinct, and yield weak convergence results concerning the composition of the population in $\EEtn$ when the number of infective half-edges
first reaches $\log n$ in the event of a major outbreak (see Theorem~\ref{thm:BPapprox}).  Then, in Section~\ref{sec:epilogn},
we use the random time-scale transformation introduced in Section~\ref{sec:timetrans} to determine the asymptotic distribution
of the final size of a major outbreak.  The argument proceeds as in the proof of Theorem~\ref{thm:posclt} but the equation
defining $\taut$ now has a solution at $0$ and one at $\taut>0$ (see the discussion following~\eqref{equ:taut1}) and a lower
bounding branching process for the epidemic $\EEtn$ is used to show that $\taut>0$ is the relevant asymptotic hitting time. 

For ease of presentation we assume that, for $n=1,2,\dots$, there is one initial infective in $\EEtn$ (i.e.~that $a=1$), who
is chosen by sampling a half-edge uniformly at random from all $\Dn_1+\Dn_2+\dots+\Dn_n$ half-edges and infecting the individual who owns
that half-edge. The proofs are easily extended to $a>1$ and other ways of choosing the initial infective(s) but the details are more
complicated.

\subsubsection{Coupling of epidemic and branching processes}
\label{sec:coupling}
Let $(\Omega, \mathcal{F}, {\rm P})$  be a probability space on which are defined the following independent
sets of random varaibles:
\begin{enumerate}
\item[(i)] $D_1,D_2,\dots$ i.i.d. $\sim D$;
\item[(ii)] $U_0,U_1,\dots$ i.i.d. $\sim {\rm U}(0,1)$;
\item[(iii)] $L_1, L_2, \dots$ i.i.d. $\sim {\rm Exp}(1)$;
\item[(iv)] $Y_{01}, Y_{02}, \dots Y_{0\dmax}$, where $Y_{0i} \sim {\rm Bin}(i, 1-\re^{-\lambda I})$;
\item[(v)] for $i=1,2,\dots,\dmax$, $Y_{i1}, Y_{i2},\dots$ i.i.d. $\sim {\rm Bin}(i-1, 1-\re^{-\lambda I})$.
\end{enumerate}

For $n=1,2,\dots$, let $\pn_i=n^{-1}\sum_{k=1}^n 1_{\{D_k=n\}}$ $(i=0,1,\dots,\dmax)$ and $\pnt_i=i\pn_i/\mudn$
$(i=1,2,\dots,\dmax)$, where $\mudn=n^{-1}\sum_{k=1}^n D_k$.  Note that by the strong law of large numbers
$\mudn>0$ (and $\pnt_i$ is well defined) for all sufficiently large $n$ almost surely. Let
$\cnt_i=\sum_{j=1}^i \pnt_j$ $(i=1,2,\dots,\dmax)$.  For $x \in (0,1)$, let $\dnt(x)=\min\{i:x \le \cnt_i\}$.
Similarly, let $\pt_i=ip_i/\mud$ and $\ct_i=\sum_{j=1}^i \pt_j$ $(i=1,2,\dots,\dmax)$, and let 
$\dt(x)=\min\{i:x \le \ct_i\}$ $(0<x<1)$.

For $n=1,2,\dots$, construct on $(\Omega, \mathcal{F}, {\rm P})$  a realisation of a two-type continuous-time Markov branching process $\BBtn$, which approximates the process of infected and recovered half-edges in the 
epidemic $\EEtn$, as follows.   The types are denoted $I$ and $R$ depending on whether the individual corresponds to an infective
or recovered half-edge.  Only type-$I$ individuals have offspring and they do so at their moment of death.  Type-$R$ individuals live forever.
For $t \ge 0$, let $\Ynehat(t)$ and $\Znehat(t)$ denote respectively the number of type-$I$ and type-$R$ individuals alive in $\BBtn$ at time $t$.
The initial state $(\Ynehat(0),\Znehat(0))$ is determined as follows.  Let $\dn_0=\dnt(U_0)$, which corresponds to the initial infective in
$\EEtn$ having degree $\dn_0$.  If $\dn_0=0$ then $(\Ynehat(0),\Znehat(0))=(0,0)$ and $\BBtn$ goes extinct immediately.  Alternatively, if
$\dn_0>0$ then $(\Ynehat(0),\Znehat(0))=(Y_{0\dn_0}, d_0-Y_{0\dn_0})$.  In that case, for $k=1,2,\dots$, the $k$th type-$I$ individual born in $\BBtn$
(including the initial individuals) has degree $\dn_k=\dnt(U_k)$ and lives until age $L_k$, when it dies.  Denote this individual by
$i^*$.  Suppose that $\dn_k=i$ and $i^*$ is the $l$th degree-$i$ individual (excluding the initial individuals) born in $\BBtn$.  Then, when $i^*$ dies, 
it leaves $Y_{il}$ type-$I$ and $i-1-Y_{il}$ type-$R$ offspring.   Of course, reproduction stops in $\BBtn$ if $\Ynehat(t)=0$.
Construct also on $(\Omega, \mathcal{F}, {\rm P})$ a realisation of a two-type continuous-time branching process $\BBt$, defined
analogously to $\BBtn$ but  using the function $\dt$ instead of $\dnt$.  For $t \ge 0$, let $Y_E(t)$ and $Z_E(t)$ denote respectively the numbers of
type-$I$ and type-$R$ individuals alive in $\BBt$ at time $t$.

For $n=1,2,\dots$, construct on $(\Omega, \mathcal{F}, {\rm P})$ a realisation of the epidemic $\EEtn$, defined in Section~\ref{sec:altconstr},
as follows.  Give the $n$ individuals in $\EEtn$ the labels $1,2,\dots,n$ in increasing order of degree.  Now label the $n\mudn$ half-edges
$1,2,\dots,n\mudn$, starting with the half-edges (if any) attached to individual $1$,  then the half-edges (if any) attached to individual $2$
and so on.  Thus half-edges attached to the same individual have consecutive labels.  As in Section~\ref{sec:altconstr}, for $t \ge 0$, 
let $\Yne(t)$ and  $\Zne(t)$ denote respectively the number of infective and recovered half-edges in $\EEtn$ at time $t$.
The initial infective in $\EEtn$ is the individual who owns the half-edge labelled $\left \lfloor{n\mudn U_0}\right \rfloor+1$. Note that
this individual has degree $\dn_0=\dnt(U_0)$.  If $\dn_0=0$ then the epidemic stops immediately.  Alternatively, if $\dn_0>0$ then the initial 
infective infects along $\kn_0=Y_{0\dn_0}$ of its half-edges with its remaining $\dn_0-\kn_0$ half-edges becoming recovered half-edges.  The epidemic
stops if $\kn_0=0$, otherwise the initial infective transmits infection along its infective half-edges at times $L_1,L_2,\dots,L_{k_0}$.

For $k=1,2,\dots,$ let $l_k^{(n)}=\left \lfloor{n\mudn U_{k}}\right \rfloor+1$ and note that the half-edge having label $l_k^{(n)}$ is attached to an individual
having degree $\dn_k=\dnt(U_k)$.
When infection is transmitted along a half-edge that half-edge, $l_*$ say, is attempted to be paired with the half-edge having label 
$l_k^{(n)}$, where $k$ is the number of the $U_0,U_1,\dots$ that have been used already in the construction
of $\EEtn$.  (Thus, for example, the first half-edge emanating from the initial infective is attempted to be paired with the half-edge having
label $l_1^{(n)}$.)  If the half-edge $l_k^{(n)}$ has already been paired or $l_k^{(n)}=l_*$ then the attempt fails and
$l_*$ is attempted to be paired with the half-edge $l_{k+1}^{(n)}$, and so on until a valid pairing is obtained and an edge is formed.  Suppose that
a valid pairing is made with the half-edge having label $l_V$.  Let $i^*$ be the individual that owns the half-edge $l_V$ and $i$ be the degree
of $i^*$.  If $i^*$ is susceptible then it becomes an infective, otherwise nothing happens apart from the formation of the edge.  Suppose that $i^*$ is susceptible and is the $l$th degree-$i$ susceptible to be infected in $\EEtn$, excluding the initial infective.  Then $i^*$ infects along $Y_{il}$ of its half-edges
and its remaining $i-1-Y_{il}$ half-edges become recovered half-edges. (When $i^*$ was infected one of its $i$ half-edges was paired to its infector.)
Let $k_1=Y_{il}$. The times of these $k_1$, infections, relative to the infection time of $i^*$, are given by $L_{k^*+1}, L_{k^*+2}, \dots, L_{k^*+k_1}$,
where $k^*$ is the number of infective half-edges created in $\EEtn$ prior to the infection of $i^*$.  The epidemic terminates when $\Yne(t)=0$, or when 
$\Yne(t)=1$ and all other half-edges have been paired.

As in Section~\ref{sec:altconstr}, for $t \ge 0$, let $\bXn(t)=(\Xn_0(t), \Xn_1(t),\dots,\Xn_{\dmax}(t))$, where $\Xn_i(t)$ is the number of degree-$i$ susceptible individuals at time $t$ in
$\EEtn$.  For $n=1,2,\dots$, let $\tauhatn=\inf\{t \ge 0:\Yne(t) \ge \log n\}$, where $\tauhatn=\infty$ if $\Yne(t) < \log n$ for all $t \ge 0$.
Let $A_{\rm ext}=\{\omega \in \Omega: \lim_{t \to \infty} Y_E(t)=0\}$ denote the set on which type-$I$ individuals become extinct in the branching process $\BBt$ and
let $\alpha$ denote the Malthusian parameter of $\BBt$.  Then $\alpha$ is given by the unique real solution of the equation
\begin{equation*}
\int_0^{\infty} \re^{-\alpha t} \mudt p_I \re^{-t}\,{\rm d}t=1,
\end{equation*}
so $\alpha=R_0-1$, where $R_0$ is defined at~\eqref{equ:Rzero}.

\begin{theorem}
\label{thm:BPapprox}
\begin{enumerate}
\item[(a)] $\lim_{n \to \infty} {\rm P}\left(\tauhatn=\infty | A_{\rm ext} \right)=1$.

\item[(b)] Suppose that $R_0>1$, so ${\rm P}(A_{\rm ext}^C)>0$.  Then, as $n \to \infty$, 
\begin{enumerate}
\item[(i)] ${\rm P}\left(\tauhatn<\infty | A_{\rm ext}^C \right) \to 1$;
\item[(ii)] $\frac{1}{\log n}\Yne(\tauhatn) \left|\right. A_{\rm ext}^C\convp 1$;
\item[(iii)] $\frac{1}{\log n}\Zne(\tauhatn) \left|\right. A_{\rm ext}^C\convp \alpha^{-1}q_I\mudt$;
\item[(iv)] $\sqrt{n}\left(n^{-1}\bXn(\tauhatn)-\bp\right) | A_{\rm ext}^C \convD {\rm N}(\bzero, \Sigma_{XX})$, where $\Sigma_{XX}$ is defined at~\eqref{equ:SigmaXX}.
\end{enumerate}
\end{enumerate}
\end{theorem}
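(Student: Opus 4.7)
The plan is to exploit the two-stage coupling constructed in Section~\ref{sec:coupling}: $\EEtn\leftrightarrow \BBtn\leftrightarrow \BBt$. A discrepancy between $\EEtn$ and $\BBtn$ first appears when an infective half-edge in $\EEtn$ is drawn for pairing with an already-paired half-edge (a ``collision''), while a discrepancy between $\BBtn$ and $\BBt$ first appears when $\dnt(U_k)\neq \dt(U_k)$ for some $U_k$ used so far. Writing $M^{(n)}$ for the number of $U_k$'s consumed up to time $t$, the collision probability is $O((M^{(n)})^2/n)$ (at most $2M^{(n)}$ already-used half-edges out of $n\mudn$), while the conditional probability of a single $\dnt/\dt$ mismatch is at most $2\max_i|\pnt_i-\pt_i|=O_p(n^{-1/2})$ uniformly over the finitely many degree classes by the central limit theorem. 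Both mismatch probabilities are therefore $o(1)$ whenever $M^{(n)}=O(\log n)$.

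For part~(a), on $A_{\rm ext}$ the total progeny of $\BBt$ is a.s.\ finite, equal to some random variable $N$. Fix $N_0$; for all sufficiently large $n$ the probability that either coupling fails within its first $N_0$ steps is at most $CN_0/\sqrt n$, and on the complementary success event $\EEtn$ and $\BBt$ agree throughout extinction, so $\Yne(t)\le Y_E(t)\le N<\log n$ for all $t\ge 0$ once $N\le N_0$ and $n\ge \re^{N_0}$. Since $\P(N\le N_0\mid A_{\rm ext})\to 1$ as $N_0\to\infty$, a diagonal argument ($N_0\to\infty$ after $n\to\infty$) yields $\P(\tauhatn=\infty\mid A_{\rm ext})\to 1$.

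For part~(b), with $R_0>1$ the branching process $\BBt$ is supercritical with Malthusian parameter $\alpha=R_0-1>0$. Since degrees are bounded, the offspring distribution has all moments, so the Kesten--Stigum theorem for multitype continuous-time branching processes yields a finite nonnegative random variable $W$ with $\{W>0\}=A_{\rm ext}^C$ a.s.\ such that
\[
\re^{-\alpha t}\bigl(Y_E(t),Z_E(t)\bigr)\convas \bigl(W,\alpha^{-1}q_I\mudt\, W\bigr)\qquad\mbox{as }t\to\infty;
\]
the proportionality constant follows from $\frac{d}{dt}\E[Z_E(t)]=\mudt q_I\E[Y_E(t)]$ together with $\E[Y_E(t)]\sim c\,\re^{\alpha t}$ and the joint a.s.\ limit for supercritical multitype processes. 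Define $\taubarn=\inf\{t\ge 0:Y_E(t)\ge \log n\}$. On $A_{\rm ext}^C$, $\taubarn<\infty$ for every $n$ and $\taubarn\sim \alpha^{-1}\log\log n$; since $Y_E$ jumps by at most $\dmax-1$, the overshoot is bounded, so $Y_E(\taubarn)/\log n\convas 1$, and the second coordinate of the display gives $Z_E(\taubarn)/\log n\convas \alpha^{-1}q_I\mudt$. Substituting $M^{(n)}=O(\log n)$ into the mismatch bounds yields total coupling-failure probability $O(\log n/\sqrt n)=o(1)$, so on a set of probability tending to $1$ conditional on $A_{\rm ext}^C$ we have $\tauhatn=\taubarn$, $\Yne(\tauhatn)=Y_E(\taubarn)$ and $\Zne(\tauhatn)=Z_E(\taubarn)$, giving (i)--(iii).

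For part~(iv), on the coupling-success event the number of individuals infected in $\EEtn$ by time $\tauhatn$ equals one plus the number of $I$-deaths in $\BBt$ by time $\taubarn$, which is also $O(\log n)$ on $A_{\rm ext}^C$ by the same Kesten--Stigum asymptotics applied to the counting process of $I$-deaths. Hence $|\Xn_i(\tauhatn)-\Xn_i(0)|=O(\log n)$ for each $i$, and therefore $\sqrt{n}\bigl(n^{-1}\bXn(\tauhatn)-n^{-1}\bXn(0)\bigr)\convp \bzero$. The vector $\bXn(0)$ is, up to the removal of the single initial infective (a $\sqrt n$-negligible correction), the multinomial count of the i.i.d.\ sample $D_1,\dots,D_n$ and is independent of the auxiliary variables $\{U_k,L_k,Y_{ij}\}$ driving $\BBt$; in particular it is independent of $A_{\rm ext}^C$. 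The multinomial CLT then gives $\sqrt{n}(n^{-1}\bXn(0)-\bp)\convD {\rm N}(\bzero,\Sigma_{XX})$ unconditionally and hence also conditional on $A_{\rm ext}^C$, yielding (iv). The main obstacle throughout is the bookkeeping needed to keep both coupling-failure probabilities $o(1)$ all the way up to the (random) hitting time $\tauhatn\sim\alpha^{-1}\log\log n$; beyond that the theorem reduces to classical Kesten--Stigum asymptotics and to the elementary observation that $O(\log n)$ perturbations are invisible on the $\sqrt n$ scale.
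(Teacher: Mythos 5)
Your argument is correct and follows essentially the same route as the paper: the same two-stage coupling $\EEtn\leftrightarrow\BBtn\leftrightarrow\BBt$ with the same $O(m^2/n)$ collision bound and $O(n^{-1/2})$ per-step degree-mismatch bound, the same finite-progeny/diagonal argument for part (a), and the same bounded-overshoot and Slutsky reasoning for parts (b)(i)--(iv). The one technical caveat is that the two-type process $\BBt$ is reducible (type-$R$ individuals produce no offspring), so the classical multitype Kesten--Stigum theorem you invoke does not literally apply; the paper instead treats $\BBt$ as a single-type process of type-$I$ individuals carrying a random characteristic that counts type-$R$ offspring and cites Nerman's theorem, which yields the same joint almost-sure limits for $\re^{-\alpha t}Y_E(t)$, $\re^{-\alpha t}Z_E(t)$ and $\re^{-\alpha t}T_E(t)$.
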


\begin{proof}
The key observations underlying the proof are that (i) the processes $\{(\Ynehat(t),\Znehat(t)):t \ge 0\}$ and 
$\{(\Yne(t),\Zne(t)):t \ge 0\}$ coincide up until at least the first time that an attempt is made to pair a half-edge 
with a half-edge belonging to an individual previously used in the construction of $\EEtn$ and 
(ii) the branching processes $\BBtn$ and $\BBt$ coincide up until the first time that
$\dnt(U_k) \ne \dt(U_k)$.  Thus we show that the probability  that the processes $\{(\Ynehat(t),\Znehat(t)):0 \le t \le \tauhatn\}$ and 
$\{(Y_E(t),Z_E(t)):0 \le t \le \tauhatn\}$ coincide converges to $1$ as $n \to \infty$.  The theorem then follows using standard results concerning
the asymptotic behaviour of the branching process $\BBt$.

For $n=1,2,\dots$ and $k=1,2,\dots , n\mudn$, let $\mathcal{C}^{(n)}(k)$ be the set of half-edges attached
to the individual owning half-edge $k$ in $\EEtn$.  Thus $ k \in \mathcal{C}^{(n)}(k)$ and $|\mathcal{C}^{(n)}(k)| \le \dmax$.
For $n=1,2,\dots$, let
\[
\Mn=\min\left\{k \ge 1: l_k^{(n)} \in \bigcup_{i=0}^{k-1}\mathcal{C}^{(n)}(l_i^{(n)})\right\}
\]
be the number of pairings required in $\EEtn$ until an attempt is made to pair a half-edge 
with a half-edge belonging to a previoulsy used individual.  Let $\bD=(D_1,D_2,\dots)$.  Then, for $m=1,2,\dots$,
\begin{eqnarray*}
{\rm P}(\Mn \le m|\bD) &=&{\rm P}\left(\bigcup_{k=1}^m \left\{l_k^{(n)} \in \bigcup_{i=0}^{k-1}\mathcal{C}^{(n)}(l_i^{(n)})\right\}|\bD\right)\\
&\le& \frac{m(m+1)}{2} \frac{\dmax}{n\mudn}.
\end{eqnarray*}
Now $\mudn \convas \mud$ as $n \to \infty$ by the strong law of large numbers.  Hence, for any $\gamma \in (0,1/2)$,
${\rm P}(\Mn \le n^{\gamma}|\bD) \convp 0$ as $n \to \infty$.  Thus, 
\begin{equation}
\label{equ:Epicoupling}
\lim_{n \to \infty}{\rm P}(\Mn \le n^{\gamma})=\lim_{n \to \infty}{\rm E}[{\rm P}(\Mn \le n^{\gamma}|\bD)]=0,
\end{equation}
as ${\rm P}(\Mn \le n^{\gamma}|\bD)$ $(n=1,2,\dots)$ is uniformly bounded.

Turning to the coupling of $\BBtn$ and $\BBt$, for $n=\,2,\dots$,
\begin{eqnarray}
\label{equ:tvbound}
\max_{i=1,2,\dots,\dmax-1}|\cnt_i-\ct_i| &\le& \frac{1}{2}\sum_{i=1}^{\dmax}|\pnt_i-\pt_i|\\
&\le& \frac{1}{\mud}\sum_{i=1}^{\dmax}i|\pn_i-p_i|.\nonumber
\end{eqnarray}
The first inequality in~\eqref{equ:tvbound} follows from the total variation distance between the distributions
$\pnt_i$ $(i=1,2,\dots,\dmax)$ and $\pt_i$ $(i=1,2,\dots,\dmax)$, and the second inequality follows using the triangle
inequality and elementary manipulation (\cite{BN:2017}, equation (3.2)).  For $n=1,2,\dots$, let 
$\hat{M}^{(n)}=\min\{k \ge 0: \dnt(U_k) \ne \dt(U_k)\}$.  Then, for $k=0,1,\dots$,
\[
{\rm P}\left(\hat{M}_n \le k\right) \le \frac{(k+1)(\dmax-1)}{\mud} {\rm E}\left[\sum_{i=1}^{\dmax} i |\pn_i-p_i|\right].
\]
Now, for $i=1,2,\dmax$, as $n\pn_i \sim {\rm Bin}(n,p_i)$,
\[
{\rm E} \left[|\pn_i-p_i|\right] \le \sqrt{{\rm var}(\pn_i)}=\frac{1}{\sqrt{n}}\sqrt{p_k(1-p_k)}.
\]
Thus, for any $\gamma \in (0,1/2)$,
\begin{equation}
\label{equ:BPcoupling}
\lim_{n \to \infty} {\rm P}\left(\hat{M}^{(n)} \le n^\gamma\right)=0.
\end{equation}

For $t \ge 0$, let $T_E(t)$ be the total number of individuals (of either type) alive in 
the branching process $\BBt$ during $[0,t]$ and let $T_E(\infty)=\lim_{t \to \infty} T_E(t)$.
Thus $T_E(\infty,\omega)<\infty$ if and only if $\omega \in A_{\rm ext}$.
For $n=1,2,\dots$ and $t \ge 0$, let 
\[
\An_{\rm couple}(t)=\left\{\omega \in \Omega: \left(\Yne(u),\Zne(u)\right)=\left(Y_E(u),Z_E(u)\right)\mbox{ for all }u \in [0,t]\right\}.
\]

Let $\tau=\min\{t \ge 0:Y_E(t)=0\}$, where $\tau(\omega)=\infty$ if $\omega \in A_{\rm ext}^C$.  Then~\eqref{equ:Epicoupling}
and~\eqref{equ:BPcoupling} imply that, for $k=0,1,\dots$,
\[
\lim_{n \to \infty}{\rm P}\left(\An_{\rm couple}(\tau)\cap \left\{T_E(\infty)=k\right\} \right)=1.
\]
Part (a) of the theorem follows since $A_{\rm ext}=\bigcup_{k=0}^{\infty} \{T_E(\infty)=k\}$.

Note that although $\BBt$ is a two-type branching process, since only type-$I$ individuals produce offspring,
it can be treated as a single-type branching process consisting only of type-$I$ individuals, in which attached to each 
(type-$I$) individual is a random characteristic (see~\cite{Nerman81}) given by its number of type-$R$ offspring.
Then it follows from~\cite{Nerman81}, Theorem 5.4, that there exists a random variable $W \ge 0$, where 
$W(\omega)=0$ if and only if $\omega \in A_{\rm ext}$ such that, as $t \to \infty$,
\begin{equation}
\label{equ:yelim}
\re^{-\alpha t} Y_E(t) \convas  W,
\end{equation}
\begin{equation}
\label{equ:zelim}
\re^{-\alpha t} Z_E(t) \convas  \alpha^{-1}q_I\mudt W
\end{equation}
and
\begin{equation}
\label{equ:telim}
\re^{-\alpha t} T_E(t) \convas \alpha^{-1}\mudt W.
\end{equation}
(It is easily checked that~\eqref{equ:yelim}-\eqref{equ:telim} hold by calculating the the appropriate
$m_{\infty}^{\phi}$ in ~\cite{Nerman81}, Theorem 5.4, and using Corollary 3.2 of that paper. For a heuristic 
argument note, for example, that if $Y_E(u)\approx W \re^{\alpha u}$ $(u \ge 0)$ then $T_E(t) \approx 
\int_0^t \mudt \re^{\alpha u} W\,{\rm d}u \approx \alpha^{-1}\mudt W \re^{\alpha t}$, since type-$I$ individuals die at rate $1$
and have on average $\mudt$ offspring.)

For $n=1,2,\dots$, let $\taubarn=\inf\{t \ge 0:Y_E(t) \ge \log n\}$, where $\taubarn(\omega)=\infty$ if $\omega \in A_{\rm ext}$.
Then it follows from~\eqref{equ:yelim} that $\taubarn(\omega)<\infty$ and $\lim_{n \to \infty}
\taubarn(\omega)=\infty$, for ${\rm P}$-almost all $\omega \in A_{\rm ext}^C$.  Thus, for ${\rm P}$-almost all $\omega \in A_{\rm ext}^C$,
\begin{equation}
\label{equ:yetaubralim}
\lim_{n \to \infty} \frac{1}{\log n} Y_E(\taubarn)=1,
\end{equation}
since an individual has at most $\dmax$ offspring, and using~\eqref{equ:yelim}-\eqref{equ:telim},
\begin{equation}
\label{equ:zetaubarlim}
\lim_{n \to \infty} \frac{1}{\log n} Z_E(\taubarn)=\alpha^{-1}q_I\mudt
\end{equation}
and
\begin{equation}
\label{equ:tetaubarlim}
\lim_{n \to \infty} \frac{1}{\log n} T_E(\taubarn)=\alpha^{-1}\mudt.
\end{equation}
Now~\eqref{equ:Epicoupling}, \eqref{equ:BPcoupling} and~\eqref{equ:tetaubarlim} imply that
\begin{equation}
\label{equ:panlim}
\lim_{n \to \infty}{\rm P}\left(\An_{\rm couple}(\taubarn)|A_{\rm ext}^C\right)=1,
\end{equation}
and (i)-(iii) of part (b) of the theorem follow using~\eqref{equ:zelim} and~\eqref{equ:yetaubralim}.

To prove part (b)(iv), note by the multivariate central limit theorem  that
\[
\sqrt{n}\left(n^{-1}\bXn(0)-\bp\right) \convD {\rm N}(\bzero, \Sigma_{XX}) \qquad \mbox{as } n \to \infty.
\]
Also,~\eqref{equ:tetaubarlim} and~\eqref{equ:panlim} imply that, for $i=0,1,\dots,\dmax$,
\[
\lim_{n \to \infty}{\rm P}\left(\Xn_i(\tauhatn)-\Xn_i(0) \le 2 \alpha^{-1} \mudt \log n|A_{\rm ext}^C\right)=1,
\]
so $\frac{1}{\sqrt{n}}\left(\Xn_i(\tauhatn)-\Xn_i(0)\right)|A_{\rm ext}^C \convp 0$ as $n \to \infty$.  Part (b)(iv)
now follows using Slutsky's theorem.
\end{proof}

\subsubsection{Epidemic starting with $\left \lceil{\log n}\right \rceil$ infectives}
\label{sec:epilogn}

Turning to the proof of Theorem~\ref{thm:majclt}, note that by Theorem~\ref{thm:BPapprox},
$\lim_{n \to \infty}{\rm P}\left(\Gn \triangle A_{\rm ext}\right)=0$, where $\triangle$ denotes
symmetric difference.  Hence, using the strong Markov property,
we can determine the asymptotic distribution of $\TnNSW|\Gn$ by considering the random time-scale transformed process 
$\{\bWnt(t)\}$, defined in Section~\ref{sec:timetrans}, with initial state $\bWnt(0)=(\bXn(\tauhatn),\Yne(\tauhatn),\Zne(\tauhatn))$.
Thus, by Theorem~\ref{thm:BPapprox}(b),
\begin{equation}
\label{equ:wntildeinit}
\sqrt{n}\left(n^{-1}\bWnt(0)-(\bp,0,0)\right) \convD {\rm N}(\bzero, \SigmaNSW_0)\qquad \mbox{as } n \to \infty,
\end{equation}
where $\SigmaNSW_0$ is obtained by setting $\epsilon=0$ in~\eqref{equ:SigmaNSW0}.

By Theorem~\ref{thm:wlln}, for any $t>0$,
\begin{equation}
\label{equ:ynetweak}
\sup_{0 \le u \le t}|n^{-1}\Ynet(u)-\yet(u)| \convp 0 \qquad \mbox{as } n \to \infty,
\end{equation}
where, using~\eqref{equ:yetilde}
\begin{equation*}
\label{equ:yetilde1}
\yet(t)=\mud\left(\re^{-2t}-q_I\re^{-t}\right)-p_If_D^{(1)}(\re^{-t}).
\end{equation*}
Note that $\yet(t)=0$ if and only if 
\begin{equation}
\label{equ:taut1}
\re^{-t}-q_I-\mud^{-1}p_If_D^{(1)}(\re^{-t})=0
\end{equation}
and that $t=0$ is a solution of~\eqref{equ:taut1}. If $R_0 \le 1$ then $t=0$ is the only solution in
$[0, \infty)$, but if $R_0>1$  then we show in Appendix~\ref{app:tautprop} that, under the conditions of Theorem~\ref{thm:majclt},
there is a unique solution in $(0, \infty)$ which we denote by $\taut$.

As in Section~\ref{section:clt}, let $\tautn=\inf\{t \ge 0: \varphi(n^{-1}\bWnt(t))\le 0\}$.  Recall that $R_0>1$ is assumed in Theorem~\ref{thm:majclt}. It follows from~\eqref{equ:ynetweak}
that, $\min\left(\tautn, |\tautn-\taut|\right) \convp 0$ as $n \to \infty$.   
We show below that there exists $\epsilon_0>0$ such that 
\begin{equation}
\label{equ:tautneps}
\lim_{n \to \infty}{\rm P}(\tautn<\epsilon_0)=0.
\end{equation}
Theorem~\ref{thm:hitting} and Corollary~\ref{cor:hitting}, as stated, cannot be applied in the present setting
as $\varphi(\bwt(0))=0$.  However, in the terminology of Theorem~\ref{thm:hitting}, the proof 
of~\cite{EK86}, Theorem 11.4.1, extends easily to the situation when $\taun=\inf\{t \ge \epsilon_0: \varphi(n^{-1}\bXn(t)) \le 0\}$ and
$\tau=\inf\{t \ge \epsilon_0: \varphi(\bx(t)) \le 0\}$, for fixed $\epsilon_0>0$.  Hence, so do Theorem~\ref{thm:hitting} and Corollary~\ref{cor:hitting}.

Let 
\begin{equation}
\label{equ:Tnt}
\Tnt=\sum_{i=0}^{\dmax}\left(\Xnt_i(0)-\Xnt_i(\tautn)\right).
\end{equation}
Then using~\eqref{equ:tautneps} and the above extension of Corollary~\ref{cor:hitting} (we show in Appendix~\ref{app:tautprop} that $a(\taut)=\nabla \varphi(\bw(\taut))\cdot \tilde{F}(\bw(\taut))<0$), setting $\epsilon=0$ in the proof of Theorem~\ref{thm:posclt}
yields
\[
\sqrt{n}\left(n^{-1}\Tnt-\rho\right) \convD {\rm N}(0,\sigmatNSW) \qquad \mbox{as } n \to \infty.
\]
Theorem~\ref{thm:majclt} follows using Slutsky's theorem since, by~\eqref{equ:tetaubarlim},
\[
\frac{1}{\sqrt{n}}\left(\TnNSW-\Tnt\right) \convp 0 \qquad\mbox{as } n \to \infty.
\]

To complete the proof of Theorem~\ref{thm:majclt} we use a device, first introduced by~\cite{Whittle55} in the setting of a homegeneously mixing epidemic, to show that there exists $\epsilon_0>0$ such that~\eqref{equ:tautneps} holds.  For $n=1,2,\dots$, construct on $(\Omega, \mathcal{F},{\rm P})$ an epidemic $\EEcheckn$  
analogously to $\EEtn$ except having initial state given in an obvious notation by
\[
\left(\bXncheck(0),\Ynecheck(0),\Znecheck(0)\right) =\left(\bXn(\tauhatn),\Yne(\tauhatn),\Zne(\tauhatn)\right).
\]
Let $\BBcheckn$ be the corresponding branching process which assumes all attempted pairing in $\EEcheckn$ are valid.  Let
\[
\Tncheck=\sum_{i=0}^{\dmax}\left(\Xncheck_i(0)-\Xncheck_i(\infty)\right)
\]
be the final size of $\EEcheckn$.  Fix $\delta \in (0,1)$.  Then, provided $\Tncheck \le n\delta$, for a given pairing, the probability that 
an invalid pairing is attempted is at most $\pn(\delta)=\min(\delta n\dmax/(n\mudn),1)=\min(\delta \dmax /\mudn,1)$.  It follows that
\begin{equation}
\label{equ:lbBP}
{\rm P}\left(\Tncheck \ge n\delta\right) \ge {\rm P}\left(\Tncheck_B(\delta) \ge n\delta\right),
\end{equation}
where $\Tncheck_B(\delta)$ is the total number of type-$I$ individuals (excluding the initial individuals) ever alive in the
branching process $\BBcheckn(\delta)$ that is obtained from $\BBcheckn$ by aborting each type-$I$ individual independently with 
probability $\pn(\delta)$.

Let $\Rn_0(\delta)$ be the mean number of type-$I$ individuals spawned by a typical type-$I$ individual in $\BBcheckn(\delta)$
and let $\pin(\delta)$ be the extinction probability for type-$I$ individuals in $\BBcheckn(\delta)$ if the process were to start with 
one type-$I$ individual. As $n \to \infty$,  $\pn_i \convas p_i$  ($i=0,1,\dots,\dmax$), by the strong law of large numbers, so
$\pnt_i \convas \pt_i$  ($i=1,2,\dots,\dmax$) and $\mundt \convas \mudt$, where $\mundt=\sum_{i=1}^{\dmax}(i-1)\pnt_i$.  Thus,
\[
\Rn_0(\delta)=(1-\pn(\delta))\mundt p_I \convas (1-p(\delta))R_0 \qquad \mbox{as } n \to \infty,
\]
where $p(\delta)=\min(\delta \dmax/\mud,1)$.  Further, as $n \to \infty$, the offspring distribution of $\BBcheckn(\delta)$ converges
almost surely to that of $\BBt(\delta)$, where $\BBt(\delta)$ is the branching process obtained from $\BBt$ by aborting each
type-$I$ individual independently with probability $p(\delta)$.  Thus, by a simple extension of~\cite{BJML:2007}, Lemma 4.1,
$\pin(\delta) \convas \pi(\delta)$ as $n \to \infty$, where $\pi(\delta)$ is the extinction probability for type-$I$ individuals in
$\BBt(\delta)$ starting from one type-$I$ individual.  

Recall that $R_0>1$.  Thus $\delta \in (0,1)$ can be chosen sufficently small such that $(1-p(\delta))R_0>1$, whence
$\pi(\delta)<1$.

Now, $\Ynecheck(0) \ge \log n$ by construction, so using~\eqref{equ:lbBP}, for such $\delta$,
\[
\lim_{n \to \infty} {\rm P}\left(\Tncheck \ge n\delta|\bD\right) \ge 1- \lim_{n \to \infty}\left(\pin\right)^{\log n}=1,
\]
for ${\rm P}$-almost all $\omega \in  A_{\rm ext}^C$.  Hence 
\begin{equation}
\label{equ:ptncheck}
\lim_{n \to \infty}{\rm P}\left(\Tncheck \ge n\delta | A_{\rm ext}^C\right)=1.
\end{equation}

Finally, for $t \ge 0$, let $\xt(t)=\sum_{i=0}^{\dmax}\xt_i(t)=f_D(\re^{-t})$, using~\eqref{equ:xtilde} with $\xt_i(0)=p_i$ 
$(i=0,1,\dots,\dmax)$.  By construction, $\bWn(\taun)=\bWnt(\tautn)$, so~\eqref{equ:ptncheck} and Theorem~\ref{thm:wlln} applied to the process $\{\bWnt(t)\}$,
with initial state satisfying~\eqref{equ:wntildeinit}, imply that~\eqref{equ:tautneps} holds with $\epsilon_0$ being given by the
unique solution in $(0,\infty)$ of $1-f_D(\re^{-\epsilon_0})=\delta$.

\subsection{Proof of Theorem~\ref{thm:zeroclt}}
\label{sec:proofzeroclt}
The proof of Theorem~\ref{thm:zeroclt} for the epidemic on the NSW random graph parallels in an obvious fashion the argument in Section~\ref{sec:epilogn} above without the need to condition on $A_{\rm ext}^C$, so the details are omitted.  Again, the proof for the epidemic on the MR random graph 
is similar but simpler.

\subsection{Proof of Theorem~\ref{thm:percclt}}
\label{sec:proofpercclt}
Consider first bond percolation and note that when $I\equiv 1$ and $\lambda=-\log(1-\pi)$ then in the
directed random graph $\GGtn$ defined at the start of Section~\ref{sec:altconstr}, all the possible directed edges in $\GGtn$
(i.e. between pairs of neighbours in $\GGn$) are present independently, each with probability $\pi$. Further, when constructing
the final outcome $\def\TTn{\mathcal{T}^{(n)}}$ of the corresponding epidemic $\EEn$ using $\GGtn$, for any pair $(i,j)$ of distinct individuals
use is made of at most one of the directed edges $i \to j$ and $j \to i$ (if $i$ infects $j$, whether or not $j$ tries to infect $i$
is immaterial).  Thus, in this situation $\GGtn$ can be replaced by $\GGnbond$, obtained using bond percolation on $\GGn$, and in $\EEn$, an initial susceptible is
ultimately infected if and only if in $\GGnbond$ there is a chain of edges connecting it to an initial infective.

Suppose that there is one initial infective.  Then the final size of the epidemic (including the initial infective) is given by
the size of the connected component of $\GGnbond$ that contains the initial infective.  With probability tending to $1$ as $n \to \infty$, 
a major outbreak occurs in $\EEn$ if and only if the initial infective belongs to the largest connected component of $\GGnbond$, and
the final size of a major epidemic is given by the size $\Cn$ of that connected component.  Further $\Cn$ has the same asymptotic distribution as
$\Tn|\Gn$.  Thus~\eqref{equ:MRbondclt} and~\eqref{equ:NSWbondclt} follow immediately on setting
$I\equiv 1$ and $\lambda=-\log(1-\pi)$ in Theorem~\ref{thm:majclt}.  (Note that $p_I=\pi$ and, since $I$ is constant, $q_I^{(2)}=q_I^2$.)

Turning to site percolation, consider the epidemic $\EEn$ with ${\rm P}(I=\infty)=\pi=1-{\rm P}(I=0)$, so each infective infects all of its
neighbours with probability $\pi$ and none of them otherwise.  Suppose that there is one initial infective, $i^*$ say, and 
let $\TTn=\{j \in \NNn \setminus \{i^*\}: i^* \leadsto j\}$ using the directed random graph $\GGtn$.  Then $\TTn\cup \{i^*\}$ differs
from the connected component containing $i^*$ in $\GGnsite$ (site percolation on $\GGn$) in that $\TTn$ also includes infected indiviudals
having $I=0$, which are deleted in $\GGnsite$.  Thus, to obtain a central limit theorem for $\Cn$, we need one for 
$\Vn=|\{j \in \NNn \setminus\{i^*\}:I_j=\infty \mbox{ and } i^* \leadsto j\}|$, the final size of $\EEn$ counting only
individuals with $I=\infty$.  This can be obtained by augmenting the process $\{\bWn(t)\}$ as we now outline.  

Let $\{\bWnhat(t)\}=\{\bWnhat(t):t \ge 0\}$, where 
\[
\bWnhat(t)= (\bWn(t), \Vn(t))=(\bXn(t), \Yne(t), \Zne(t), \Vn(t))
\]
and $\Vn(t)$ is the total number of initial susceptibles that are infected during $(0,t]$ and have $I=\infty$. A typical
element of $\Hnhat$, the state space of $\{\bWnhat(t)\}$, is now $\bnhat=(n^X_0,n^X_1,\dots,n^X_{\dmax},n^Y_E,n^Z_E, n^V)$.  The transition
intensities of $\{\bWnhat(t)\}$ are essentially the same of those of $\{\bWn(t)\}$, given at the end of Section~\ref{sec:altconstr}, except
now transitions of type (i) are partitioned according to whether or not the infected susceptible has $I=\infty$.  For
$i=1,2,\dots,\dmax$, we have
\[
\qhatn(\bnhat, \bnhat-\bes_i-\bei+(i-1)\ber)=\frac{n^Y_E i n^X_i p_{i,k}(1-\pi)}{n^X_E+n^Y_E+n^Z_E-1},
\]
corresponding to the degree-$i$ infected susceptible having $I=0$, and
\[
\qhatn(\bnhat, \bnhat-\bes_i+(i-1)\bei+\bev)=\frac{n^Y_E i n^X_i p_{i,k}\pi}{n^X_E+n^Y_E+n^Z_E-1},
\]
corresponding to the degree-$i$ infected susceptible having $I=\infty$.

We apply the same random time-scale transformation to $\{\bWnhat(t)\}$ as done to $\{\bWn(t)\}$ in Section~\ref{sec:timetrans}.
Denote the time-transformed process by $\{\bWnbar(t)\}$.
Let $\blhat^{(1)}_{i,0}=\bl_{i,0}^{(1)}$
and $\blhat^{(1)}_{i,i-1}=\bl_{i,i-1}^{(1)}+\bev$ $(i=1,2,\dots,\dmax)$, and $\blhat^{(j)}_+=\bl^{(j)}_+$ and
$\blhat^{(j)}_-=\bl^{(j)}_-$ $(j=2,3)$.
The set of possible jumps of $\{\bWnbar(t)\}$ from a typical state $\bnhat$ is now $\hat{\Delta}=\hat{\Delta}_1 \cup \hat{\Delta}_2 \cup \hat{\Delta}_3$,
where $\hat{\Delta}_1=\bigcup_{i=1}^{\dmax} \{\blhat^{(1)}_{i,0}\}\cup\{\blhat^{(1)}_{i,i-1}\}$, 
$\hat{\Delta}_2=\{\blhat^{(2)}_+,\blhat^{(2)}_-\}$ and $\hat{\Delta}_3=\{\blhat^{(3)}_+,\blhat^{(3)}_-\}$.

Let $\bwhat=(\bx,y_E,z_E,v)$.  The family of processes $\{\bWnbar(t)\}$ is again asymptotically density dependent with
corresponding functions $\betabar_{\bl}(\bwhat)$ $(\bl \in \hat{\Delta})$ given by (cf.~\eqref{equ:intensityfun})
\begin{equation}
\label{equ:intensityfunperc}
\betabar_{\bl}(\bwhat) = \begin{cases}
	      \betabar_{i,0}^{(1)}(\bx,y_E,z_E,v)=ix_i (1-\pi)& \text{ for } \bl=\blhat_{i,0}^{(1)} \in \hat{\Delta}_1, \\
	      \betabar_{i,i-1}^{(1)}(\bx,y_E,z_E,v)=ix_i \pi& \text{ for } \bl=\blhat_{i,i-1}^{(1)} \in \hat{\Delta}_1, \\	      
	      \betabar^{(2)}_+(\bx,y_E,z_E,v)=y_E 1_{\{y_E>0\}}& \text{ for } \bl=\blhat^{(2)}_+,\\
	      \betabar^{(2)}_-(\bx,y_E,z_E,v)=-y_E 1_{\{y_E<0\}}& \text{ for } \bl=\blhat^{(2)}_-,\\
	      \betabar^{(3)}_+(\bx,y_E,z_E,v)=z_E 1_{\{z_E>0\}}& \text{ for } \bl=\blhat^{(3)}_+,\\
	      \betabar^{(3)}_-(\bx,y_E,z_E,v)=-z_E 1_{\{z_E<0\}}& \text{ for } \bl=\blhat^{(3)}_-.	      
\end{cases}
\end{equation}
The associated drift function is (cf.~\eqref{equ:driftFw})
\begin{align*}
\label{equ:driftFw}
\bar{F}(\bwhat)&=-\sum_{i=1}^{\dmax} i x_i \bes_i+\left\{\sum_{i=1}^{\dmax} i x_i[(i-1)\pi-1] -2y_E-z_E\right\}\bei\\
&\qquad\qquad+\left[\sum_{i=1}^{\dmax} i(i-1)(1-\pi)x_i-z_E\right]\ber+\left[\sum_{i=1}^{\dmax} ix_i \pi\right]\bev.\nonumber
\end{align*}

For $t \ge 0$, let $\bwbar(t)=(\xbar_0(t),\xbar_1(t),\dots,\xbar_{\dmax}(t),\yebar(t),\zebar(t), \vbar(t))$ be defined analogously to $\bwt(t)$ at~\eqref{equ:bwt}.  
Noting that $p_I=\pi$ and $q_I=1-\pi$, the corresponding deterministic model for $\bwbar(t)$ is given by~\eqref{equ:dxtidt}-\eqref{equ:dzetdt},
with $\xt_i$ replaced by $\xbar_i$ etc., augmented with
\begin{equation}
\label{equ:dvtdt}
\dfrac{d\vbar}{dt}=\pi \sum_{i=1}^{\dmax} i\xbar_i.
\end{equation}
Thus,~\eqref{equ:xtilde}-\eqref{equ:zetilde} still hold, with the above change of notation, and, using~\eqref{equ:xtilde},
\begin{equation}
\label{equ:vbar}
\vbar(t)=\vbar(0)+\pi\sum_{i=1}^{\dmax}\xbar_i(0)(1-\re^{-it}).
\end{equation}

The stopping time $\tautn$ is unchanged, except now $\varphi(\bwhat)=\varphi(\bx,y_E,z_E,v)=y_E$.
Recall that site percolation corresponds to one initial infective, so under both the MR and NSW models,
$\epsilon_i=0$ and $\xt_i(0)=p_i$ $(i=0,1,\dots,\dmax)$.  Thus $\taut=\inf\{t\ge 0:\yebar(t)=0\}$ ($=\inf\{t\ge 0:\yet(t)=0\}$) is given by the unique solution of~\eqref{equ:taut1} in $(0,\infty)$.  Hence $z=\re^{-\taut}$ is given by the unique solution in $(0,1)$ of~\eqref{equ:zperc}.  
Now $\vt(0)=0$, since $\Vn(0)=0$ for all $n=1,2,\dots$, so using~\eqref{equ:vbar} and recalling that $\rho=1-f_D(z)$,
\[
\vbar(\taut)=\pi\left[1-f_D(\re^{-\taut})\right]=\pi \rho.
\]

Let $\Vnbar=\Vnbar(\tautn)$, so in the site percolation setting, $\Cn\eqD 1+\Vnbar|\Gn$. Now  application of Corollary~\ref{cor:hitting}, 
as at~\eqref{equ:Wntildeclt}, yields
\begin{equation*}
\sqrt{n}\left(n^{-1}\bWnbar(\tautn)-\bwbar(\taut)\right) \convD {\rm N}(\bzero,\Bbar \Sigmabar(\taut) \Bbar^{\top})  \qquad \mbox{as }n \to \infty,
\end{equation*}
where
\[
\Bbar=I-\frac{\bar{F}(\bwbar(\taut))\bigotimes \nabla \varphi(\bwbar(\taut))}{\nabla \varphi(\bwbar(\taut))\cdot \bar{F}(\bwbar(\taut))}
\]
and $\Sigmabar(\taut)$ is obtained by making obvious changes to~\eqref{equ:phit}-\eqref{equ:Gtilde} to account for the extra dimension.
Further, $\Vnbar=\bWnbar(\tautn)(\bzero, 0, 0, 1)^{\top}$,
so
\[
\sqrt{n}\left(n^{-1}\Vnbar-\pi\rho\right) \convD {\rm N}(0, \bar{\sigma}^2)\qquad \mbox{as }n \to \infty,
\]
where
\begin{equation*}
\hat{\sigma}^2=(\bzero,0,0,1)\Bbar\Sigmabar(\taut)\Bbar^{\top}(\bzero,0,0,1)^{\top}.
\end{equation*}
A simple calculation yields $(\bzero,0,0,1)\Bbar=(\bzero,-\pi b(\taut),0,1)$, where $b(\taut)$ is given by~\eqref{equ:btaut}
with $a(\taut)$ obtained by replacing $\fde$ by $f_D$ in~\eqref{equ:ataut}.  Define $\Phibar(t,u)$ analogously to $\Phit(t,u)$ at~\eqref{equ:phit}.
Then it follows using~\eqref{equ:xtilde}-\eqref{equ:zetilde} and~\eqref{equ:vbar} that
\begin{equation*}
\left[(\bzero,0,0,1)\Phibar(\taut,u)\right]_i=
\begin{cases} 
\pi\left(1-re^{-i(\taut-u)}\right)& \text{ if } i=0,1,\dots,\dmax, \\
	      0 & \text{ if } i=\dmax+1,\dmax+2,\\
	      1 & \text{ if } i=\dmax+3,
\end{cases}
\end{equation*}
and $\left[(\bzero,1,0,0)\Phibar(\taut,u)\right]_i$ is $0$, if $i=\dmax+3$, and given by the right-hand side of~\eqref{equ:bzero10phit} (with $p_I=\pi$
and $q_I=1-\pi$, if $i=0,1,\dots,\dmax+2$.  

It follows that $\bcbar(\taut,u)=(\bzero,0,0,1)\Bbar\Phibar(\taut,u)$ is given by
\begin{equation}
\label{equ:bchat}
\bcbar(\taut,u)=-\pi(\bc(\taut,u),0)+\pi(\bone,0,0,0)+(\bzero,0,0,1),
\end{equation}
where $\bc(\taut,u)$ is defined at~\eqref{equ:bc}, and (cf.~\eqref{equ:sigma2a})
\begin{equation*}
\bar{\sigma}^2=\bcbar(\taut,0)\Sigmabar_0 \bcbar(\taut,0)^{\top}+\sum_{\blhat \in \hat{\Delta}}\int_0^{\taut} \left(\bcbar(\taut,u)\blhat^{\top}\right)^2
\betabar_{\bl}(\bwbar(u))\,{\rm d}u,
\end{equation*}
where $\Sigmabar_0=\SigmabarMR_0$ or $\SigmabarNSW_0$, depending on whether the random graph is MR or NSW.  Here, 
$\SigmabarMR_0$ and $\SigmabarNSW_0$ are the asymptotic variance matrices of $n^{-\frac{1}{2}}\bWnbar(0)$ for the
MR and NSW random graphs, respectively; cf.~\eqref{equ:MRinitCLT} and~\eqref{equ:NSWinitCLT}.  

We show in Appendix~\ref{app:asymvarperc} that
\begin{align}
\label{equ:sumsigmaperc}
&\sum_{\blhat \in \hat{\Delta}}\int_0^{\taut} \left(\bcbar(\taut,u)\blhat^{\top}\right)^2 \betabar_{\bl}(\bwbar(u))\,{\rm d}u
=\pi(1-\pi)[\rho-2h(z)(1-z)\mud]\\
&\qquad\qquad\qquad+\pi^2 \sum_{\bl \in \Delta}\int_0^{\taut} \left(\bc(\taut,u)\bl^{\top}\right)^2 \betat_{\bl}(\bwt(u))\,{\rm d}u\nonumber
\end{align}
and, for both the MR and NSW random graphs, that
\begin{equation}
\label{equ:bchatSigma0}
\bcbar(\taut,0)\bar{\Sigma}_0 \bcbar(\taut,0)^{\top}=\bc(\taut,0)\Sigma_0 \bc(\taut,0)^{\top}.
\end{equation}
Equations~\eqref{equ:MRsiteclt} and~\eqref{equ:NSWsiteclt} then follow using Theorem~\ref{thm:majclt}, noting that $q_I^{(2)}=\pi(1-\pi)$.

\section{Concluding comments}
\label{sec:conc}
A shortcoming of our results, from a mathematical though not a practical viewpoint, is the requirement of a maximal degree 
$\dmax$.  It seems likely that Theorems~\ref{thm:posclt}-\ref{thm:zeroclt} continue to hold when that requirement is relaxed,
subject to appropriate conditions on the degree sequence (MR model) or degree distribution $D$ (NSW model).  This
conjecture is supported by the recent work of~\cite{BR:2017}, who use Stein's method to obtain a central limit theorem for local graph
statistics in the MR model, allowing for unbounded degree, and as an application obtain a central limit theorem for the size of the giant
component with asymptotic variance given by $\sigmatMR$ with $p_I=1$.  To extend the present proof to models with unbounded degree
would require a functional central limit theorem for density dependent population processes with countable state spaces. \cite{BL:2012}
give such a theorem but it is not applicable in our setting as it would require a finite upper bound on the number of neighbours an
individual can infect.

The central limit theorems can be extended, at least in principle, to allow for the infection rate $\lambda$ to depend on the degree
of an infective, and also to more general infection processes in which the set of its neighbours that are contacted by a given infective
is a symmetric sampling procedure (\cite{ML:1986}).  In both cases it is straightforward to determine the limiting deterministic model
in Section~\ref{section:detmodel} and the equation corresponding to~\eqref{equ:taut}, which governs $\taut$, but calculation of the asymptotic variances in the central limit
theorems is likely to be prohibitive.

The configuration model does not display clustering in the limit as $n \to \infty$ and several authors have considered
modifications of the configuration model that introduce clustering.  In~\cite{Trapman:2007} and~\cite{CL:2014}, in the
configuration model construction, for $d=1,2,\dots$, some individuals having $d$ half-edges are replaced by fully connected
cliques, each of size $d$, with each member of a clique having exactly one half-edge.  The half-edges are then paired up in
the usual fashion.  In~\cite{Gleeson:2009} and~\cite{BST:2010}, the network is formed as in the configuration model
and the population is also partitioned into fully connected cliques.  In both models, the set of edges in the network is the
union of those in cliques and the paired half-edges.  The methodology in this paper can be extended to this general class of models as
follows.

As in Section~\ref{sec:altconstr}, the network and epidemic are constructed simultaneously.  The objects counted
are now fully susceptible cliques, typed by their size and degree composition, and infective and recovered half-edges.
When infection is transmitted down a half-edge that half-edge is paired with a uniformly chosen half-edge as before.
If it is paired with a susceptible half-edge, then an epidemic is triggered within the corresponding clique and
associated half-edges, leading to the creation of further infective and recovered half-edges and, unless the 
clique epidemic infects the entire clique, a new susceptible clique having reduced size.  Central limit theorems for the 
final size of epidemics on MR and NSW versions of such random graphs should follow using similar arguments to before but again calculation of the asymptotic 
variances may be difficult.  If the infectious period is constant, so the epidemic model is equivalent to bond percolation on 
the network, the analysis may perhaps be simplified by first splitting the cliques into components determined by bond percolation
and then using similar methods to the present paper treating the components as super-individuals.

\appendix

\section{Detailed derivations}
\label{app:details}

\subsection{Deterministic solution~\eqref{equ:xtilde}-\eqref{equ:zetilde}}
\label{app:det}
First note that~\eqref{equ:xtilde} follows immediately from~\eqref{equ:dxtidt}.  Substituting~\eqref{equ:xtilde}
into~\eqref{equ:dzetdt} yields
\[
\dfrac{d\zet}{dt}= q_I\left[\sum_{i=2}^{\dmax} i(i-1)\xt_i(0)\re^{-it}\right]-\zet
\]
and~\eqref{equ:zetilde} follows using the integrating factor $\re^{t}$.

Multiplying~\eqref{equ:dxtidt} by $i$ and adding over $i=1,2,\dots,\dmax$ yields
\begin{equation}
\label{equ:dxetdt}
\dfrac{d\xet}{dt}=-\sum_{i=2}^{\dmax}i(i-1)\xt_i-\xet.
\end{equation}
Recall that $\etat(t)=\xet(t)+\yet(t)+\zet(t)$.  Summing~\eqref{equ:dxetdt},~\eqref{equ:dyetdt} and~\eqref{equ:dzetdt} gives
\[
\dfrac{d\etat}{dt}=-2\etat,
\]
and~\eqref{equ:eta} follows.
Equation~\eqref{equ:yetilde} then follows since $\yet(t)=\etat(t)-\xet(t)-\zet(t)$ and, using~\eqref{equ:xtilde}, $\xet(t)=\sum_{i=1}^{\dmax}i\xt_i(0)\re^{-it}$.

\subsection{Properties of $\taut$ and $a(\taut)$}
\label{app:tautprop}
In this appendix we show that $\taut\in (0,\infty)$ and $a(\taut)<0$, first in the setting of Theorem~\ref{thm:posclt}
and then in the setting of Theorems~\ref{thm:majclt} and~\ref{thm:zeroclt}.  Let $G(s)=p_I\fde^{(1)}(s)-\mud(s-q_I)$
$(0 \le s \le 1)$. Then, from~\eqref{equ:taut}, $z=\re^{-\taut}$ satisfies $G(z)=0$.  Now $G(0)=p_I\fde^{(1)}(0)+\mud q_I>0$,
unless $q_I=0$ and $p_1-\epsilon_1=0$.  Also $\fde^{(1)}(1)<\mud$, since $p_i\epsilon_i>0$ for at least one $i>0$, so 
$G(1)<0$.  Thus, under the conditions of Theorem~\ref{thm:posclt}, $G(s)$ has at least one zero in $(0,1)$.  Moreover
it has precisely one zero, $z$ say, as $G(s)$ is convex on $[0,1]$, since $G^{(2)}(s)=p_I\fde^{(3)}(s) \ge 0$ for all $s \in [0,1]$.
Hence $\taut=-\log z \in (0,\infty)$, as required, and it follows from~\eqref{equ:ataut} that $a(\taut)<0$ if and only if
$G^{(1)}(z)<0$.  Suppose, for contradiction, that $G^{(1)}(z)\ge 0$.  Then, since $G^{(2)}(s)\ge 0$ for all $s \in [z,1]$,
\[
G(1) \ge G(z) +\int_z^1  G^{(1)}(s)\,{\rm d}s \ge 0.
\]
However, $G(1)<0$, so $G^{(1)}(z)<0$, as required.

Turning to the setting of Theorems~\ref{thm:majclt} and~\ref{thm:zeroclt}, now let $G(s)=p_If_D^{(1)}(s)-\mud(s-q_I)$
$(0 \le s \le 1)$.  Then, from~\eqref{equ:taut1},  $z=\re^{-\taut}$ satisfies $G(z)=0$.  Now $G(1)=0$ and, under the conditions of
Theorems~\ref{thm:majclt} and~\ref{thm:zeroclt}, $G(0)>0$.  Also, using~\eqref{equ:Rzero}, $G^{(1)}(1)=p_I f_D^{(2)}(1)-\mud>0$,
since $R_0>1$.  Thus, since $G(s)$ is convex on $[0,1]$, there exists a unique $z \in (0,1)$ such that $G(z)=0$, so $\taut \in (0, \infty)$.
Moreover, $G^{(2)}(s)>0$ for $s \in [z,1]$ and a similar contradiction argument to before shows that $G^{(1)}(z)<0$, whence
$a(\taut)<0$.

\subsection{Asymptotic variances $\sigmaMR$ and $\sigmaNSW$}
\label{app:asymvar}
In this appendix we derive the expressions for $\sigmaMR$ and $\sigmaNSW$ given in
Theorem~\ref{thm:posclt}.  Recalling the partition $\Delta=\Delta_1 \cup \Delta_2 \cup \Delta_3$ defined just before~\eqref{equ:aDDPP}, it follows from~\eqref{equ:sigma2a} that
\begin{equation}
\label{equ:sigmaMRNSW}
\sigmaMR=\sigma_{0,MR}^2+\sum_{i=1}^3 \sigma_i^2 \qquad \mbox{and}\qquad \sigmaNSW=\sigma_{0,NSW}^2+\sum_{i=1}^3 \sigma_i^2,
\end{equation}
where
\begin{equation}
\label{equ:sigma0MRNSW}
\sigma_{0,MR}^2=\bc(\taut,0)\SigmaMR_0 \bc(\taut,0)^{\top},\qquad  
\sigma_{0,NSW}^2=\bc(\taut,0)\SigmaNSW_0 \bc(\taut,0)^{\top}
\end{equation}
and, for $i=1,2,3$,
\begin{equation*}
\sigma_i^2=\int_0^{\taut} \sum_{\bl \in \Delta_i}\left(\bc(\taut,u)\bl^{\top}\right)^2
\betat_{\bl}(\bwt(u))\,{\rm d}u.
\end{equation*}
We calculate $\sum_{i=1}^3 \sigma_i^2$ in Appendix~\ref{app:sumsigmai}, $\sigma_{0,MR}^2$ and then $\sigmaMR$ in
Appendix~\ref{app:sigma0MR2}, and $\sigma_{0,NSW}^2$ and then $\sigmaNSW$ in Appendix~\ref{app:sigma0NSW2}.

\subsubsection{Calculation of $\sum_{i=1}^3 \sigma_i^2$}
\label{app:sumsigmai}

Noting that $\yet(t) \ge 0$ and $\zet(t) \ge 0$ for $0 \le t \le \taut$,
it follows from~\eqref{equ:intensityfun},~\eqref{equ:cI} and~\eqref{equ:cR} that
\begin{equation}
\label{equ:sigma2b}
\sigma_2^2=\int_0^{\taut} 4 c_I(\taut,u)^2 \yet(u)\,{\rm d}u 
\end{equation}
and
\begin{eqnarray}
\label{equ:sigma3b}
\sigma_3^2&=&\int_0^{\taut} (c_I(\taut,u)+c_R(\taut,u))^2 \zet(u)\,{\rm d}u \\
&=&\int_0^{\taut} (2c_I(\taut,u)-c_J(\taut,u))^2 \zet(u)\,{\rm d}u,\nonumber
\end{eqnarray}
where
\begin{equation}
\label{equ:cJ}
c_J(\taut,u)=b(\taut)\re^{-(\taut-u)}.
\end{equation}

Now 
\begin{equation}
\label{equ:sigma1b}
\sigma_1^2=\int_0^{\taut} \gt(u)\,{\rm d}u,
\end{equation}
where, using~\eqref{equ:intensityfun}, 
\begin{eqnarray}
\label{equ:gtu}
\gt(u)&=&\sum_{\bl \in \Delta_1}\left(\bc(\taut,u)\bl^{\top}\right)^2
\betat_{\bl}(\bwt(u))\nonumber\\
&=&\sum_{i=1}^{\dmax}\sum_{k=0}^{i-1}\left[\bc(\taut,u)(\bl_{ik}^{(1)})^{\top}\right]^2 p_{i,k} i \xt_i(u) .
\end{eqnarray}

Recalling~\eqref{equ:ci}, it follows using~\eqref{equ:intensityfun} that, for
$i=1,2,\dots,\dmax$ and $k=0,1,\dots,i-1$,
\begin{equation}
\label{equ:bcbl1}
\bc(\taut,u)(\bl_{ik}^{(1)})^{\top}=(p_Ib(\taut)-1)\re^{-i(\taut-u)}-2c_I(\taut,u)+[iq_I-(i-k-1)]c_I(\taut,u).
\end{equation}
Recall that 
$p_{i,k}={\rm P}(X=k)$, where $X\sim {\rm Bin}(i-1,1-\exp(-\lambda I))$; see Section~\ref{sec:altconstr}.  Elementary calculation
yields, for $i=1,2,\dots,\dmax$, that
\begin{equation}
\label{equ:sum1}
\sum_{k=0}^{i-1} (i-k-1)p_{i,k}=(i-1)q_I
\end{equation}
and
\begin{equation}
\label{equ:sum2}
\sum_{k=0}^{i-1} (i-k-1)^2p_{i,k}=(i-1)(i-2)q_I^{(2)}+(i-1)q_I.
\end{equation}
Using~\eqref{equ:sum1} and~\eqref{equ:sum2}, it follows from~\eqref{equ:bcbl1}
and some algebra that, for $i=1,2,\dots,\dmax$
\begin{align}
\label{equ:sumcl2}
\sum_{k=0}^{i-1}&\left[\bc(\taut,u)(\bl_{ik}^{(1)})^{\top}\right]^2p_{i,k}
=[2c_I(\taut,u)-q_Ic_J(\taut,u)]^2\\
&\qquad+q_Ip_Ic_J(\taut,u)^2(i-1)+(q_I^{(2)}-q_I^2)c_J(\taut,u)^2(i-1)(i-2)\nonumber\\
&\qquad+2[2c_I(\taut,u)-q_Ic_J(\taut,u)]\re^{-i(\taut-u)}[1-ib(\taut)p_I]\nonumber\\
&\qquad+\re^{-2i(\taut-u)}\left[(1-b(\taut)p_I)^2-(i-1)b(\taut)p_I(2-3b(\taut)p_I)\right.\nonumber\\
&\qquad\qquad\qquad\qquad\left.+(i-1)(i-2)b(\taut)^2p_I^2\right].\nonumber
\end{align}

Recall from~\eqref{equ:MRdetinit} and~\eqref{equ:NSWdetinit} that, under both the
MR and NSW models, $\xt_i(0)=p_i-\epsilon_i$ $(i=0,1,\dots,\dmax)$, so~\eqref{equ:xtilde} 
yields 
\begin{equation}
\label{equ:xit}
\xt_i(u)=(p_i-\epsilon_i) \re^{-i u} \qquad (i=0,1,\dots,\dmax).
\end{equation}
For $i,k=0,1,\dots$,
let $i_{[k]}=i(i-1)\dots(i-k+1)$ denote a falling factorial, with the convention that
$i_{[0]}=1$.  Then it follows from~\eqref{equ:xit} that, for $\theta \in \mathbb{R}$ and
$k=1,2,\dots$,
\[
\sum_{i=1}^{\dmax} i_{[k]} \theta^{i-k}\xt_i(u)=\re^{-ku}\fde^{(k)}(\theta \re^{-u}).
\]
Thus, for $k=1,2,\dots$,
\begin{eqnarray}
\sum_{i=1}^{\dmax} i_{[k]} \xt_i(u)&=&\re^{-ku}\fde^{(k)}(\re^{-u}),\label{equ:facsum1}\\
\sum_{i=1}^{\dmax} i_{[k]} \re^{-i(\taut-u)} \xt_i(u)&=&\re^{-k \taut}\fde^{(k)}(\re^{-\taut}),\label{equ:facsum2}\\
\sum_{i=1}^{\dmax} i_{[k]} \re^{-2i(\taut-u)} \xt_i(u)&=&\re^{-k(2\taut-u)}\fde^{(k)}(\re^{-(2\taut-u)}).\label{equ:facsum3}
\end{eqnarray}
Substituting~\eqref{equ:sumcl2} into~\eqref{equ:gtu} and using~\eqref{equ:facsum1}-\eqref{equ:facsum3} yields
\begin{align*}
\gt(u)&=\left[2c_I(\taut,u)-q_Ic_J(\taut,u)\right]^2\xet(u)+c_J(\taut,u)^2p_Iq_I\re^{-2u}\fde^{(2)}(\re^{-u})\\
&\qquad+c_J(\taut,u)^2(q_I^{(2)}-q_I^2)\re^{-3u}\fde^{(3)}(\re^{-u})\\
&\qquad+2\left[2c_I(\taut,u)-q_Ic_J(\taut,u)\right][1-b(\taut)p_I]\re^{-\taut}\fde^{(1)}(\re^{-\taut})\\
&\qquad-2b(\taut)p_I\left[2c_I(\taut,u)-q_Ic_J(\taut,u)\right]\re^{-2\taut}\fde^{(2)}(\re^{-\taut})\\
&\qquad+\left[1-b(\taut)p_I\right]^2\re^{-(2\taut-u)}\fde^{(1)}(\re^{-(2\taut-u)})\\
&\qquad-b(\taut)p_I\left[2-3b(\taut)p_I\right]\re^{-2(2\taut-u)}\fde^{(2)}(\re^{-(2\taut-u)})\\
&\qquad+b(\taut)^2p_I^2 \re^{-3(2\taut-u)}\fde^{(3)}(\re^{-(2\taut-u)}).
\end{align*}
It then follows using~\eqref{equ:sigma2b},~\eqref{equ:sigma3b} and~\eqref{equ:sigma1b} that
\begin{equation}
\label{equ:sigma123}
\sigma_1^2+\sigma_2^2+\sigma_3^2=\sum_{i=1}^{10} I_i,
\end{equation}
where
\[
I_i=\int_0^{\taut} f_i(u)\,{\rm d}u,
\]
with
\begin{align*}
f_1(u)&=4c_I(\taut,u)^2\left(\xet(u)+\yet(u)+\zet(u)\right),\\
f_2(u)&=\left[c_J(\taut,u)^2-4c_I(\taut,u)c_J(\taut,u)\right]]\zet(u),\\
f_3(u)&=q_I\left[q_Ic_J(\taut,u)^2-4c_I(\taut,u)c_J(\taut,u)\right]]\xet(u),\\
f_4(u)&=c_J(\taut,u)^2p_Iq_I\re^{-2u}\fde^{(2)}(\re^{-u}),\\
f_5(u)&=c_J(\taut,u)^2(q_I^{(2)}-q_I^2)\re^{-3u}\fde^{(3)}(\re^{-u}),\\
f_6(u)&=2\left[2c_I(\taut,u)-q_Ic_J(\taut,u)\right][1-b(\taut)p_I]\re^{-\taut}\fde^{(1)}(\re^{-\taut}),\\
f_7(u)&=-2b(\taut)p_I\left[2c_I(\taut,u)-q_Ic_J(\taut,u)\right]\re^{-2\taut}\fde^{(2)}(\re^{-\taut}),\\
f_8(u)&=\left[1-b(\taut)p_I\right]^2\re^{-(2\taut-u)}\fde^{(1)}(\re^{-(2\taut-u)}),\\
f_9(u)&=-b(\taut)p_I\left[2-3b(\taut)p_I\right]\re^{-2(2\taut-u)}\fde^{(2)}(\re^{-(2\taut-u)}),\\
f_{10}(u)&=b(\taut)^2p_I^2 \re^{-3(2\taut-u)}\fde^{(3)}(\re^{-(2\taut-u)}).
\end{align*}

Noting that $\etat(0)=\mud$, it follows using~\eqref{equ:cI} and~\eqref{equ:eta} that
\begin{equation}
\label{equ:I1}
I_1=2b(\taut)^2\mud \re^{-2\taut}\left(1-\re^{-2\taut}\right).
\end{equation}

Recall from~\eqref{equ:MRdetinit} and~\eqref{equ:NSWdetinit} that under both the MR and NSW models,
$\xet(0)=\sum_{i=1}^{\dmax} i (p_i-\epsilon_i)$ and $\zet(0)=q_I\sum_{i=1}^{\dmax} i \epsilon_i$.
It then follows from~\eqref{equ:zetilde} and~\eqref{equ:xit} that 
\[
\zet(u)=q_I\left(\mud \re^{-u}-\xet(u)\right),
\]
so
\[
f_2(u)+f_3(u)=c_J(\taut,u)\mud q_I\left[c_J(\taut,u)-4c_I(\taut,u)\right]\re^{-u}-c_J(\taut,u)^2q_Ip_I\xet(u).
\]
Also, setting $k=1$ in~\eqref{equ:facsum1} yields $\xet(u)=\re^{-u}\fde^{(1)}(\re^{-u})$.  Thus,
\begin{align}
I_2+I_3+I_4&=\int_0^{\taut}c_J(\taut,u)\mud q_I\left[c_J(\taut,u)-4c_I(\taut,u)\right]\re^{-u}\,{\rm d}u\label{equ:I234a}\\
&\qquad+q_Ip_I\int_0^{\taut}c_J(\taut,u)^2\left[\re^{-2u}\fde^{(2)}(\re^{-u})-\re^{-u}\fde^{(1)}(\re^{-u})\right]\,{\rm d}u\nonumber.
\end{align}
The first integral in~\eqref{equ:I234a} involves only exponential functions and is easily evaluated.  Using~\eqref{equ:cJ}, the integrand
in the second integral in~\eqref{equ:I234a} can be expressed as 
\[
-b(\taut)^2\re^{-2\taut}\dfrac{d}{dt}\left[\re^u\fde^{(1)}(\re^{-u})\right],
\]
so that integral is also easily evaluated.  Hence, omitting the details,
\begin{align}
\label{equ:I234}
I_2+I_3+I_4&=2b(\taut)^2\mud\left(\re^{-2\taut}-\re^{-4\taut}\right)-b(\taut)^2\mud q_I\re^{-\taut}\left(1+\re^{-\taut}-2\re^{-2\taut}\right)\\
&\qquad+q_Ip_Ib(\taut)^2\left[\re^{-2\taut}\fde^{(1)}(1)-\re^{-\taut}\fde^{(1)}(\re^{-\taut})\right].\nonumber
\end{align}

Turning to $I_5$, note that
\[
c_J(\taut,u)^2\re^{-3u}\fde^{(3)}(\re^{-u})
=-b(\taut)^2\re^{-2\taut}\dfrac{d}{dt}\left[\fde^{(2)}(\re^{-u})\right],
\]
so
\begin{equation}
\label{equ:I5}
I_5=b(\taut)^2 \left(q_I^{(2)}-q_I^2\right)\re^{-2\taut}\left[\fde^{(2)}(1)-\fde^{(2)}(\re^{-\taut})\right].
\end{equation}
The integrals $I_6$ and $I_7$ are easily evaluated yielding
\begin{align}
\label{equ:I67}
I_6+I_7&=2b(\taut)\left\{\left[1-p_Ib(\taut)\right]\re^{-\taut}\fde^{(1)}(\re^{-\taut})-p_Ib(\taut)\re^{-2\taut}\fde^{(2)}(\re^{-\taut})\right\}\\
&\qquad\times\left(p_I-\re^{-2\taut}+q_I\re^{-\taut}\right).\nonumber
\end{align}

To evaluate $I_{8}$ to $I_{10}$, let $J_k=\int_0^{\taut} \re^{-k(2\taut-u)}\fde^{(k)}(\re^{-(2\taut-u)})\,{\rm d}u$ $(k=1,2,\dots)$.  Then
a simple reduction formula yields
\begin{equation*}
J_k=\re^{-(k-1)\taut}\fde^{(k-1)}(\re^{-\taut})-\re^{-2(k-1)\taut}\fde^{(k-1)}(\re^{-2\taut})-(k-1)J_{k-1},
\end{equation*}
for $k=2,3,\dots$, and
\[
J_1=\fde(\re^{-\taut})-\fde(\re^{-2\taut}).
\]
Applying these formulae to $I_{8}$ to $I_{10}$ yields after some algebra that
\begin{align}
\label{equ:I8910}
I_8+I_9+I_{10}&=\fde(\re^{-\taut})-\fde(\re^{-2\taut})\\
&\quad+b(\taut)p_I(b(\taut)p_I-2)
\left[\re^{-\taut}\fde^{(1)}(\re^{-\taut})-\re^{-2\taut}\fde^{(1)}(\re^{-2\taut})\right]\nonumber\\
&\quad+b(\taut)^2p_I^2\left[\re^{-2\taut}\fde^{(2)}(\re^{-\taut})-\re^{-4\taut}\fde^{(2)}(\re^{-2\taut})\nonumber\right].
\end{align}

Letting $z=\re^{-\taut}$, it follows from~\eqref{equ:sigma123} and the above equations that
\begin{align}
\label{equ:sigma123a}
\sum_{i=1}^3 \sigma_i^2 = &2\mud b(\taut)^2(z^2-z^4)-\mud q_I b(\taut)^2 (z+z^2-2z^3)\\
&+ p_I q_I b(\taut)^2\left[z^2\fde^{(1)}(1)-z\fde^{(1)}(z)\right]+p_I^2b(\taut)^2\left[z^2 \fde^{(2)}(z)-z^4\fde^{(2)}(z^2)\right]\nonumber\\
&+2b(\taut)\left[(1-p_Ib(\taut))z\fde^{(1)}(z)-p_Ib(\taut)z^2\fde^{(2)}(z)\right]\left(p_I-z^2+q_Iz\right)\nonumber\\
&+\fde(z)-\fde(z^2)+p_Ib(\taut)\left(p_Ib(\taut)-2\right)\left[z\fde^{(1)}(z)-z^2\fde^{(1)}(z^2)\right]\nonumber\\
&+(q_I^{(2)}-q_I^2)b(\taut)^2 z^2 \left[\fde^{(2)}(1)-\fde^{(2)}(z)\right].\nonumber
\end{align}

\subsubsection{Calculation of $\sigma_{0,MR}^2$ and $\sigmaMR$}
\label{app:sigma0MR2}
To determine $\sigma_{0,MR}^2$, note from~\eqref{equ:sigma0MRNSW}, \eqref{equ:SigmaMR0} and~\eqref{equ:bc}
that
\begin{eqnarray}
\label{equ:sigma0MR}
\sigma_{0,MR}^2&=&\left[c_I(\taut,0)-c_R(\taut,0)\right]^2 \sigma_Y^2\\
&=&b(\taut)^2z^2 \sigma_Y^2,\nonumber
\end{eqnarray}
using~\eqref{equ:cI}, \eqref{equ:cR} and $z=\re^{-\taut}$.  Now $\sigma_Y^2$ is given by~\eqref{equ:sigma2Y},
where, for $i=1,2,\dots,\dmax$, $\sigma_{Y,i}^2={\rm var}(Y_{i1})$ with $Y_{i1} \sim {\rm Bin}(i,1-\exp(-\lambda I))$.
A simple calculation, conditioning on $I$, yields
\begin{equation}
\label{equ:sigmaIi}
\sigma_{Y,i}^2=i(i-1)q_I^{(2)}+iq_I-i^2 q_I^2.
\end{equation}
Now $\sum_{i=1}^{\dmax}i \epsilon_i=\mud-\fde^{(1)}(1)$ and $\sum_{i=1}^{\dmax}i(i-1) \epsilon_i=f_D^{(2)}(1)-\fde^{(2)}(1)$, so
\begin{equation}
\label{equ:sigmaY2}
\sigma_Y^2=\left[(q_I^{(2)}-q_I^2)\left(f_D^{(2)}(1)-\fde^{(2)}(1)\right)+p_Iq_I\left(\mud-\fde^{(1)}(1)\right)\right].
\end{equation}

Using~\eqref{equ:sigmaMRNSW}, adding~\eqref{equ:sigma123a} and~\eqref{equ:sigma0MR}, after substituting from~\eqref{equ:sigmaY2}, yields
\begin{align}
\label{equ:sigmaMR}
\sigmaMR=&2\mud b(\taut)^2(z^2-z^4)-\mud q_I b(\taut)^2 (z+q_Iz^2-2z^3)-p_I q_I b(\taut)^2z\fde^{(1)}(z)\\
&+2b(\taut)\left[(1-p_Ib(\taut))z\fde^{(1)}(z)-p_Ib(\taut)z^2\fde^{(2)}(z)\right]\left(p_I-z^2+q_Iz\right)\nonumber\\
&+\fde(z)-\fde(z^2)+p_Ib(\taut)\left(p_Ib(\taut)-2\right)\left[z\fde^{(1)}(z)-z^2\fde^{(1)}(z^2)\right]\nonumber\\
&- p_I q_I b(\taut)^2 z\fde^{(1)}(z)+b(\taut)^2p_I^2z^2\left[\fde^{(2)}(z)-z^2\fde^{(2)}(z^2)\right]\nonumber\\
&+(q_I^{(2)}-q_I^2)b(\taut)^2 z^2\left[f_D^{(2)}(1)-\fde^{(2)}(z)\right].\nonumber
\end{align}
Using~\eqref{equ:taut} and recalling that $z=\re^{-\taut}$ yields
\begin{equation}
\label{equ:fde1zequation}
\mud(z-q_I)=p_I\fde^{(1)}(z).
\end{equation}
Also, using~\eqref{equ:ataut}, \eqref{equ:btaut} and~\eqref{equ:facsum1}, with $k=1$, gives
\begin{equation}
\label{equ:btaut1}
b(\taut)=\frac{\fde^{(1)}(z)}{z\left(p_I\fde^{(2)}(z)-\mud\right)},
\end{equation}
so
\begin{equation}
\label{equ:fde2zequation}
b(\taut)zp_I\fde^{(2)}(z)=b(\taut)z\mud+\fde^{(1)}(z).
\end{equation}
Substituting~\eqref{equ:fde1zequation} and~\eqref{equ:fde2zequation} into~\eqref{equ:sigmaMR}, recalling~\eqref{equ:rho}
and noting from \eqref{equ:fde1zequation} and~\eqref{equ:btaut1} that $h(z)=b(\taut)z$ yields~\eqref{equ:sigmamr} after some algebra.

\subsubsection{Calculation of $\sigma_{0,NSW}^2$ and $\sigmaNSW$}
\label{app:sigma0NSW2}

To determine $\sigma_{0,NSW}^2$, first note that~\eqref{equ:sigma0MRNSW},~\eqref{equ:SigmaNSW0} and~\eqref{equ:bc} imply that
\begin{equation}
\label{equ:sigma0nsw}
\sigma_{0,NSW}^2=(1-\epsilon)\sigma_A^2+\epsilon\sigma_B^2,
\end{equation}
where
\begin{equation}
\label{equ:sigmaA2}
\qquad\sigma_A^2=\bc(\taut,0) \Sigma_{XX}\bc(\taut,0)^{\top}
\end{equation}
and
\begin{equation}
\label{equ:sigmaB2}
\qquad\sigma_B^2=c_I(\taut,0)^2\sigma_{Y_E}^2+2c_I(\taut,0)c_R(\taut,0)\sigma_{Y_E, Z_E}+c_R(\taut,0)^2\sigma_{Z_E}^2.
\end{equation}
Recalling~\eqref{equ:SigmaXX} and~\eqref{equ:bcs},
\begin{equation}
\label{equ:csigmaXXct}
\bc(\taut,0) \Sigma_{XX}\bc(\taut,0)^{\top}=\sum_{i=0}^{\dmax} p_i c_i(\taut,0)^2-\left(\sum_{i=0}^{\dmax}p_i c_i(\taut,0)\right)^2,
\end{equation}
where from~\eqref{equ:ci} and recalling that $z=\re^{-\taut}$,
\begin{equation*}
c_i(\taut,0)=z^i+ib(\taut)z(z-q_I)-p_Ib(\taut)i z^i.
\end{equation*}
Thus,
\begin{equation*}
\sum_{i=0}^{\dmax}p_i c_i(\taut,0)=f_D(z)+b(\taut)z(z-q_I)\mud-p_Ib(\taut)zf_D^{(1)}(z)
\end{equation*}
and
\begin{align*}
\sum_{i=0}^{\dmax}p_i c_i(\taut,0)^2&=f_D(z^2)+b(\taut)^2z^2(z-q_I)^2(\sigma_D^2+\mud^2)\\
&\,+p_I^2b(\taut)^2z^2\left(z^2f_D^{(2)}(z^2)+f_D^{(1)}(z^2)\right)
+2b(\taut)z^2(z-q_I)f_D^{(1)}(z)\\
&\,-2p_Ib(\taut)z^2f_D^{(1)}(z^2)-2p_Ib(\taut)^2z^2(z-q_I)\left(zf_D^{(2)}(z)+f_D^{(1)}(z)\right).
\end{align*}

Note that in the NSW model
\begin{equation}
\label{equ:fdnsw}
\fde(s)=(1-\epsilon)f_D(s)\qquad(s \in \mathbb{R}).
\end{equation}
Hence, using~\eqref{equ:fde1zequation},
\begin{equation*}
\sum_{i=0}^{\dmax}p_i c_i(\taut,0)=f_D(z)-\frac{\epsilon}{1-\epsilon}b(\taut)z(z-q_I)\mud
\end{equation*}
and, using~\eqref{equ:fde1zequation} and~\eqref{equ:fde2zequation},
\begin{align*}
(1-\epsilon)&\sum_{i=0}^{\dmax}p_i c_i(\taut,0)^2=\fde(z^2)
+p_I^2b(\taut)^2z^2\left(z^2\fde^{(2)}(z^2)+\fde^{(1)}(z^2)\right)\\
&\qquad-2p_Ib(\taut)z^2\fde^{(1)}(z^2)+(1-\epsilon)b(\taut)^2z^2(z-q_I)^2\left(\sigma_D^2+\mud^2\right)\\
&\qquad-2b(\taut)^2z^3(z-q_I)\mud-2b(\taut)^2z^2(z-q_I)^2\mud.
\end{align*}
It then follows using~\eqref{equ:sigmaA2} and~\eqref{equ:csigmaXXct} that
\begin{align}
\label{equ:1mepssigmaa2}
(1-\epsilon)&\sigma_A^2=\fde(z^2)-(1-\epsilon)f_D(z)^2
+p_I^2b(\taut)^2z^2\left(z^2\fde^{(2)}(z^2)+\fde^{(1)}(z^2)\right)\\
&\qquad-2p_Ib(\taut)z^2\fde^{(1)}(z^2)+2\epsilon b(\taut)z(z-q_I)\mud f_D(z)\nonumber\\
&\qquad+b(\taut)^2z^2(z-q_I)^2\left[(1-\epsilon)\sigma_D^2+\frac{1-2\epsilon}{1-\epsilon}\mud^2-2\mud\right]\nonumber\\
&\qquad-2b(\taut)^2z^3(z-q_I)\mud.\nonumber
\end{align}

Recall the definition of $(Y_E,Z_E)$ just before~\eqref{equ:SigmaXX}.  Now ${\rm E}[Y_E|D]=p_ID$ and, 
using~\eqref{equ:sigmaIi}, ${\rm var}(Y_E|D)=D(D-1)q_I^{(2)}+Dq_I-D^2q_I^2$, so
\begin{eqnarray}
\label{equ:sigmaYe2}
\sigma_{Y_E}^2&=&{\rm E}\left[{\rm var}(Y_E|D)\right]+{\rm var}({\rm E}[Y_E|D])\\
&=&(q_I^{(2)}-q_I^2)f_D^{(2)}(1)+p_Iq_I\mud+p_I^2\sigma_D^2.\nonumber
\end{eqnarray}
Similar arguments show that
\begin{equation}
\label{equ:sigmaZe2}
\sigma_{Z_E}^2=(q_I^{(2)}-q_I^2)f_D^{(2)}(1)+p_Iq_I\mud+q_I^2\sigma_D^2
\end{equation}
and
\begin{equation}
\label{equ:sigmaYeZe2}
\sigma_{Y_E,Z_E}=-\left[(q_I^{(2)}-q_I^2)f_D^{(2)}(1)+p_Iq_I\mud\right]+p_Iq_I\sigma_D^2.
\end{equation}
Thus, using~\eqref{equ:sigmaB2},~\eqref{equ:cI},~\eqref{equ:cR} and $z=\re^{-\taut}$,
\begin{equation*}
\sigma_B^2=b(\taut)^2z^2\left[(q_I^{(2)}-q_I^2)f_D^{(2)}(1)+p_Iq_I\mud+\sigma_D^2(z-q_I)^2\right].
\end{equation*}
Noting from~\eqref{equ:fdnsw} that $\fde^{(1)}(1)=(1-\epsilon)\mud$ and $\fde^{(2)}(1)=(1-\epsilon)f_D^{(2)}(1)$,
it follows using~\eqref{equ:sigma0MR} that
\begin{equation}
\label{equ:sigmaB2m0MR}
\epsilon\sigma_B^2-\sigma_{0,MR}^2=\epsilon b(\taut)^2z^2(z-q_I)^2\sigma_D^2.
\end{equation}
Note from~\eqref{equ:sigmaMRNSW} that $\sigmaNSW-\sigmaMR=\sigma_{0,NSW}^2-\sigma_{0,MR}^2$.
Hence, using~\eqref{equ:sigma0nsw},~\eqref{equ:1mepssigmaa2} and~\eqref{equ:sigmaB2m0MR},
\begin{align}
\label{equ:sigmanswmmr}
\sigmaNSW-\sigmaMR&=\fde(z^2)-(1-\epsilon)f_D(z)^2
+p_I^2b(\taut)^2z^2\left(z^2\fde^{(2)}(z^2)+\fde^{(1)}(z^2)\right)\\
&\qquad-2p_Ib(\taut)z^2\fde^{(1)}(z^2)+2\epsilon b(\taut)z(z-q_I)\mud f_D(z)\nonumber\\
&\qquad+b(\taut)^2z^2(z-q_I)^2\left[\sigma_D^2+\frac{1-2\epsilon}{1-\epsilon}\mud^2-2\mud\right]\nonumber\\
&\qquad-2b(\taut)^2z^3(z-q_I)\mud.\nonumber
\end{align}

Recall that $h(z)=b(\taut)z$. Note from~\eqref{equ:rho} and~\eqref{equ:fdnsw} that \newline
$f_D(z)=(1-\epsilon-\rho)/(1-\epsilon)$ and also that $\fde^{(2)}(z)=(1-\epsilon)f_D^{(2)}(z)$.
Adding~\eqref{equ:sigmamr} and~\eqref{equ:sigmanswmmr} then yields~\eqref{equ:sigmansw}.

\subsubsection{Proof of Remark~\ref{rmk:varcomp}}
\label{app:proofrmkvarcomp}

To prove the assertions in Remark~\ref{rmk:varcomp}, suppose that the support of $D$ is not concentrated on a single point.
Then it is shown easily using~\eqref{equ:SigmaXX} that the matrix $\Sigma_{XX}$ is positive definite and
it follows immediately from~\eqref{equ:sigma0nsw},~\eqref{equ:sigmaA2} and~\eqref{equ:sigmaB2m0MR} that 
$\sigma_{0,NSW}^2>\sigma_{0,MR}^2$, whence, using~\eqref{equ:sigmaMRNSW}, $\sigmaNSW >\sigmaMR$. 
Further, setting $\epsilon=0$ yields $\sigmatNSW > \sigmatMR$.

\subsection{Asymptotic variances $\sigmaMRsite$ and $\sigmaNSWsite$}
\label{app:asymvarperc}
For $i=1,2,3$, let
\begin{equation*}
\bar{\sigma}_i^2=\int_0^{\taut} \sum_{\blhat \in \hat{\Delta}_i}\left(\bcbar(\taut,u)\blhat^{\top}\right)^2
\betabar_{\bl}(\bwbar(u))\,{\rm d}u.
\end{equation*}
Note from~\eqref{equ:bchat} that $\bcbar(\taut,u)(\blhat_+^{(2)})^{\top}=-\pi\bc(\taut,u)(\bl_+^{(2)})^{\top}$, 
so $\bar{\sigma}_2^2=\pi^2 \sigma_2^2$ since $\betabar_{\bl}(\bwbar(u))$ is independent of $\vbar$ for $\bl \in \hat{\Delta}_2$.
Similarly, $\bar{\sigma}_3^2=\pi^2 \sigma_3^2$

Turning to $\bar{\sigma}_1^2$, note that for $i=1,2,\dots,\dmax$, 
\[
\bcbar(\taut,u)(\blhat^{(1)}_{i,0})^{\top}=-\pi\left[\bc (\bl^{(1)}_{i,0})^{\top}\right]\quad\mbox{and}\quad
\bcbar(\taut,u)(\blhat^{(1)}_{i,i-1})^{\top}=1-\pi\left[\bc (\bl^{(1)}_{i,i-1}))^{\top}\right],
\]
whence, 
\begin{align*}
&(1-\pi)\left(\bcbar(\taut,u)(\blhat^{(1)}_{i,0})^{\top}\right)^2+\pi\left(\bcbar(\taut,u)(\blhat^{(1)}_{i,i-1})^{\top}\right)^2\\
&\qquad=\pi(1-\pi)
+\pi^2\left[(1-\pi)\left(\bc(\taut,u)\bl^{(1)}_{i,0})^{\top}\right)^2+\pi\left(\bc(\taut,u)\bl^{(1)}_{i,i-1})^{\top}\right)^2\right]\\
&\qquad\qquad+2\pi^2(1-\pi)\bc(\taut,u)\left(\bl^{(1)}_{i,0}-\bl^{(1)}_{i,i-1}\right)^{\top}.
\end{align*}
Now $\bl^{(1)}_{i,0}-\bl^{(1)}_{i,i-1}=(i-1)(\ber-\bei)$, so, using~\eqref{equ:bc},\eqref{equ:cI} and~\eqref{equ:cR},
\[
\bc(\taut,u)\left(\bl^{(1)}_{i,0}-\bl^{(1)}_{i,i-1}\right)^{\top}=-(i-1)c_J(\taut,u),
\]
where $c_J(\taut,u)=b(\taut)\re^{-(\taut-u)}$ (see~\eqref{equ:cJ}).  Thus, using~\eqref{equ:intensityfunperc}, $\bar{\sigma}_1^2=\int_0^{\taut}\bar{g}(u)\,{\rm d}u$, where
\begin{eqnarray*}
\bar{g}(u)&=&\sum_{i=1}^{\dmax}\left[(1-\pi)\left(\bcbar(\taut,u)(\blhat^{(1)}_{i,0})^{\top}\right)^2
+\pi\left(\bcbar(\taut,u)(\blhat^{(1)}_{i,i-1})^{\top}\right)^2\right]i\xt_i(u)\\
&=&\pi^2\tilde{g}(u)+\sum_{i=1}^{\dmax}\pi(1-\pi)\left[1-2(i-1)\pi c_J(\taut,u)\right]i\xt_i(u),
\end{eqnarray*}
and $\tilde{g}(u)$ is given by~\eqref{equ:gtu} with $p_{i,i-1}=\pi=1-p_{i,0}$.

Now $\xt_i(0)=p_i$ ($i=0,1,\dots,\dmax$), so~\eqref{equ:xtilde} yields $\sum_{i=1}^{\dmax}i_{[k]}\xt_i(u)=\re^{-ku}f_D^{(k)}(\re^{-u})$ $(k=1,2)$. Hence,
\begin{equation*}
\bar{\sigma}_1^2=\pi^2 \sigma_1^2+\pi(1-\pi)\int_0^{\taut}\re^{-u}f_D^{(1)}(u)\,{\rm d}u
-2\pi^2(1-\pi)b(\taut)\re^{-\taut}\int_0^{\taut}\re^{-u}f_D^{(2)}(u)\,{\rm d}u.
\end{equation*}
Thus, recalling that $z=\re^{-\taut}$ and $\rho=1-f_D(z)$,
\[
\int_0^{\taut}\re^{-u}f_D^{(1)}(u)\,{\rm d}u=f_D(1)-f_D(\re^{-\taut})=\rho
\]
and
\[
\int_0^{\taut}\re^{-u}f_D^{(2)}(u)\,{\rm d}u=f_D^{(1)}(1)-f_D^{(1)}(\re^{-\taut})=\mud-f_D^{(1)}(z).
\]
Further, \eqref{equ:zperc} implies $\pi[\mud-f_D^{(1)}(z)]=\mud(1-z)$, so
\[
\sum_{i=1}^3 \bar{\sigma}_i^2=\pi^2\sum_{i=1}^3 \sigma_i^2+\pi(1-\pi)[\rho-2z(1-z)b(\taut)\mud]
\]
and~\eqref{equ:sumsigmaperc} follows since $h(z)=b(\taut)z$.

For the MR random graph, setting $\epsilon_i=0$ ($i=1,2,\dots,\dmax$) in~\eqref{equ:sigma2Y}, using~\eqref{equ:SigmaMR0}
and recalling that $\Vn(0)=0$ for all $n$, shows that $\hat{\Sigma}_0=0$ and~\eqref{equ:bchatSigma0} follows.
For the NSW random graph, a similar argument setting $\epsilon=0$ in~\eqref{equ:SigmaNSW0} and using~\eqref{equ:bchat}
yields
\begin{equation}
\label{equ:bchatSima0nsw}
\bcbar(\taut,0)\Sigmabar_0 \bcbar(\taut,0)^{\top}=(\bc(\taut,0)+\bone)\Sigma_{XX}(\bc(\taut,0)+\bone)^{\top}.
\end{equation}
A simple calculation using~\eqref{equ:SigmaXX} (or noting that $\Sigma_{XX}$ is the variance matrix of
a single multinomial trial) gives $\bone\Sigma_{XX}\bone^{\top}=\bc(\taut,0)\Sigma_{XX}\bone^{\top}=0$,
so~\eqref{equ:bchatSigma0} follows from~\eqref{equ:bchatSima0nsw}.



\end{document}